\documentclass[12pt]{amsart}
\usepackage{amsmath}
\usepackage{amsfonts}
\usepackage{amssymb}
\usepackage{amscd}
\usepackage{mathtools}
\usepackage{amsxtra}
\usepackage{amsthm}
\usepackage{graphicx}
\usepackage[abbrev,alphabetic]{amsrefs}
\RequirePackage[dvipsnames,usenames]{color}
\usepackage{soul,xcolor}
\setstcolor{red}
\usepackage{stmaryrd}
\usepackage{booktabs}
\usepackage{multirow}
\newtagform{tiny}{\tiny(}{)}
\usepackage{threeparttable}

\usepackage{mathtools}
\usepackage{hyperref}
\usepackage[margin=1.25in]{geometry}
\usepackage{mathrsfs}

\usepackage{amsthm}
\usepackage{comment}
\usepackage[all,cmtip]{xy}
\usepackage{tikz-cd}
\usetikzlibrary{cd}

\tikzcdset{
  cells={font=\everymath\expandafter{\the\everymath\displaystyle}},
}

\usepackage[all]{xy}

\usepackage{cleveref}

\makeatletter
\def\@tocline#1#2#3#4#5#6#7{\relax
  \ifnum #1>\c@tocdepth 
  \else
    \par \addpenalty\@secpenalty\addvspace{#2}%
    \begingroup \hyphenpenalty\@M
    \@ifempty{#4}{%
      \@tempdima\csname r@tocindent\number#1\endcsname\relax
    }{%
      \@tempdima#4\relax
    }%
    \parindent\z@ \leftskip#3\relax \advance\leftskip\@tempdima\relax
    \rightskip\@pnumwidth plus4em \parfillskip-\@pnumwidth
    #5\leavevmode\hskip-\@tempdima
      \ifcase #1
       \or\or \hskip 1em \or \hskip 2em \else \hskip 3em \fi%
      #6\nobreak\relax
    \hfill\hbox to\@pnumwidth{\@tocpagenum{#7}}\par
    \nobreak
    \endgroup
  \fi}
\makeatother

\renewcommand{\mod}{\ \textrm{mod}\ }
 
\renewcommand{\P}{\mathbb{P}}
\newcommand{\Z}{\mathbb{Z}}

\newcommand{\F}{\mathbb{F}}

\newcommand{\cG}{\mathcal{G}}

\newcommand{\cL}{\mathcal{L}}

\newcommand{\cO}{\mathcal{O}}

\newcommand{\dR}{\mathrm{dR}}

\newcommand{\sO}{\mathcal{O}}

\newcommand{\m}{\mathfrak{m}}

\newcommand{\Frac}{\mathrm{Frac}}
\DeclareMathOperator{\Proj}{\mathrm{Proj}}

\newcommand{\wt}{\widetilde}

\DeclareMathOperator{\Sym}{Sym}

\newcommand{\ns}{\mathrm{ns}}

\DeclareMathOperator{\Spec}{Spec}

\DeclareMathOperator{\Hom}{Hom}

\DeclareMathOperator{\Ext}{Ext}

\DeclareMathOperator{\Aut}{Aut}

\DeclareMathOperator{\PGL}{PGL}
\DeclareMathOperator{\Pic}{Pic}

\DeclareMathOperator{\Ker}{Ker}

\DeclareMathOperator{\sht}{ht}
\DeclareMathOperator{\NS}{NS}
\DeclareMathOperator{\disc}{disc}
\DeclareMathOperator{\rank}{rank}

\DeclareMathOperator{\ppt}{ppt}

\newcommand{\bs}{\boldsymbol{s}}

\renewcommand{\div}{{\rm div}}
\renewcommand{\Im}{\mathrm{Im}}

\theoremstyle{plain}
\newtheorem{theorem}{Theorem}[section]

\newtheorem{proposition}[theorem]{Proposition}

\newtheorem{lemma}[theorem]{Lemma}

\newtheorem{corollary}[theorem]{Corollary}

\newtheorem*{claim*}{Claim}

\newtheorem{theoremA}{Theorem}

\theoremstyle{definition}
\newtheorem{definition}[theorem]{Definition}

\newtheorem{example}[theorem]{Example}
\newtheorem{notation}[theorem]{Notation}

\newtheorem*{setup*}{Setup}

\theoremstyle{remark}
\newtheorem{remark}[theorem]{Remark}
\newtheorem*{ackn}{Acknowledgements}

\theoremstyle{plain}

\numberwithin{equation}{section}

\crefname{theorem}{Theorem}{Theorems}
\crefname{proposition}{Proposition}{Propositions}
\crefname{lemma}{Lemma}{Lemmas}
\crefname{corollary}{Corollary}{Corollaries}
\crefname{conjecture}{Conjecture}{Conjectures}
\crefname{claim}{Claim}{Claims}
\crefname{notation}{Notation}{Notations}
\crefname{remark}{Remark}{Remarks}
\crefname{example}{Example}{Examples}
\crefname{definition}{Definition}{Definitions}
\crefname{theoremA}{Theorem}{Theorems}

\title[Formula for the Artin invariant of smooth K3 hypersurfaces]{An explicit formula for the Artin invariant of smooth K3 hypersurfaces}

\author{Teppei Takamatsu}
\address{Department of Mathematics, Faculty of Science,
Saitama University,
255 Shimo-Okubo, Sakura-ku,
Saitama-shi, Saitama 338-8570,
Japan}
\email{teppeitakamatsu.math@gmail.com}

\author{Shou Yoshikawa}
\address{Institute of Science Tokyo, Tokyo 152-8551, Japan}
\email{yoshikawa.s.9fe9@m.isct.ac.jp}

\begin{document}

\begin{abstract}
We characterize the Artin invariant of a smooth K3 hypersurface in terms of quasi-$F$-splitting. As an application, we obtain an explicit formula for this invariant.
\end{abstract}

\maketitle

\tableofcontents
\section{Introduction}
\label{Section:intro}

Let \(k\) be an algebraically closed field of characteristic \(p\), and let \(X\) be a K3 surface over \(k\).
Associated with \(X\) are two invariants specific to positive characteristic: the height and the Artin invariant.

The height \(h(X)\) of \(X\) is defined as the height of the formal Brauer group of \(X\), and one has
\[
h(X) \in \{1,2,\dots,10,\infty\}.
\]
If \(h(X)=\infty\), then \(X\) is said to be supersingular.
In this case, there exists an integer
\[
\sigma(X) \in \{1,2,\dots,10\} \quad \textup{such that} \quad 
\disc \NS(X) = -p^{2\sigma(X)},
\]
and \(\sigma(X)\) is called the Artin invariant of \(X\).
The Artin invariant is a fundamental invariant of supersingular K3 surfaces; in particular, it plays a basic role in the crystalline Torelli theory for supersingular K3 surfaces (\cite{Ogus-supersingularK3crystal}, \cite{Ogus-Crystalline-Torelli}).

The height and the Artin invariant are also important in that they give rise to stratifications on the moduli stack \(\mathcal{F}_{2D}\) of primitively polarized K3 surfaces of degree \(2D\):
\[
\mathcal{F}_{2D, i}=\{(X,L)\mid h(X)\ge i\},
\qquad
\mathcal{F}_{2D, \infty}^{j}=\{(X,L)\mid h(X)=\infty,\ \sigma(X)\le j\}.
\]
These stratifications have been studied from several points of view, especially when \(p\nmid D\), including their cycle classes and their relation to \(F\)-zips (\cite{vdGeer-Katsura}, \cite{Ekedahl-vdGeer}).
On the other hand, in \cite{OgusK3}, Ogus introduced an invariant \(\tau(X,L)\), which refines \(\sigma(X)\) and coincides with \(\sigma(X)\) when \(p\nmid D\), and studied in detail the structure of these stratifications, including the case \(p\mid D\).

However, it has been difficult to compute the height and the Artin invariant explicitly for a given K3 surface (cf.~Remark~\ref{rmk:computation-Artin-invariant}).
For the height, one approach is via point counting over finite fields (cf.\ \cite{Yu-Yui}*{Section~4} and \cite{Taelman}*{Theorem~1};
see also \cite{Matsumoto}*{Proposition~5.10}).
On the other hand, in special cases, including sextic purely inseparable double planes in characteristic 2 and Delsarte surfaces, explicit formulas for Artin invariants are also available (\cite{Shimadachar2algo}, \cite{Goto04} and \cite{Yui}).

More recently, in the case of smooth quartic surfaces in \(\mathbb{P}^3\), a simple and explicit formula was obtained in \cite{kty} (cf.\ \cite{Fed}, \cite{Stienstra}).
This formula is built on Yobuko's theorem that the quasi-\(F\)-split height coincides with the height.
Here, the quasi-\(F\)-split height is an invariant which generalizes and quantifies the classical notions of ordinarity and \(F\)-splitting.

Thus, Yobuko's result may be regarded as a bridge between the height of a K3 surface and the theory of \(F\)-singularities.
The aim of this paper is to establish a similar connection between the Artin invariant and \(F\)-singularities for smooth quartic surfaces, and to give an explicit formula for computing the Artin invariant in this setting.

For a homogeneous element $f\in A:=k[x,y,z,w]$ of degree $4$, we define its \emph{non-splitting index} by
\[
\ns(f):=
\inf\left\{
n\in \mathbb Z_{\ge 1}
\;\middle|\;
\text{$(A,(1-\frac{2}{p^n})\operatorname{div}(f))$ is not $n$-quasi-$F$-split}
\right\},
\]
where we set $\inf \emptyset:=\infty$.
Our first main result shows that, for supersingular quartic K3 surfaces $X \subset \P^3$, this invariant determines $\tau(X,\cO(1))$.

\begin{theoremA}[\Cref{ns-vs-K3-tau}]\label{intro:AI-to-ns}
For $f \in A$ as above, we assume that $X=\Proj(A/fA)$ is a supersingular K3 surface.
Then 
\[
\tau(X,\cO(1))=
\begin{cases}
\ns(f) & \text{if $\ns(f)\le 9$},\\
10 & \text{if $\ns(f)\ge 10$}.
\end{cases}
\]
\end{theoremA}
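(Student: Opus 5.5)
The plan is to pass through a cohomological reinterpretation of both sides: first translate the failure of $n$-quasi-$F$-splitting of the pair $(A,(1-\frac{2}{p^n})\operatorname{div}(f))$ into a statement about Frobenius-type operators on $H^2$ of $X$, then compare this with Ogus's description of $\tau(X,\cO(1))$ via the characteristic subspace of the supersingular K3 crystal.

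\emph{Step 1: from quasi-$F$-splitting to Frobenius operators.} I would start from a Fedder-type criterion for quasi-$F$-splitting of pairs on the polynomial ring $A$. For the boundary $(1-\frac{2}{p^n})\operatorname{div}(f)$ this criterion should read: a specific element built from $f^{p^n-2}$, together with the Witt-vector correction terms $\Delta_1(f^{p-1}),\dots$ that appear in the quasi-$F$-split theory, does not lie in $\m^{[p^n]}$. Since $X$ is a K3 surface of height $\infty$, Yobuko's theorem gives that $X$ itself is never quasi-$F$-split. Hence all the classical obstructions, namely the ones detecting $H^2(X,\cO_X)$ and the formal Brauer group, vanish identically. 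What survives is the weight $-2$ twist, which corresponds to a class of degree $2$ forms. I expect the resulting condition to be expressible as follows: the iterated operators on $H^1(X,\Omega^1_X)_{\mathrm{prim}}$, or equivalently on $H^2_{\dR}(X)$, obtained from inverse Cartier and the Bockstein/$W_n$ structure, send the class of the Hodge line $F^2H^2_{\dR}$ (generated by the residue of $dx\,dy\,dz\,dw/f$) into a prescribed subspace. Concretely, I aim for the equivalence
\[
\ns(f)\le n \iff \text{the Hodge line } F^2 \text{ lies in } \varphi^{-1}(K)\cap\cdots\cap\varphi^{-(n-1)}(K)\text{-type iterate and the primitivity condition fails at step } n.
\]

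\emph{Step 2: recall Ogus's invariant.} For a polarized supersingular K3 surface, let $K\subset H^2_{\dR}(X)$ be the characteristic subspace, i.e.\ the kernel of the reduction of the crystalline Frobenius. Ogus's $\tau(X,L)$ is governed by the flag $K_i := K\cap \varphi(K)\cap\dots\cap\varphi^{i-1}(K)$ restricted to the orthogonal complement of $c_1(L)$. By definition, $\tau$ is the first index at which $F^2$ (equivalently the line $K_{\sigma}$ in Ogus's normalization) stops being contained in the next step. The dimension $22$ and the bound $\sigma\le 10$ force this process to terminate by step $10$. This is exactly the truncation $\min(\ns(f),10)$ appearing in the statement, since the flag cannot be extended beyond length $10$.

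\emph{Step 3: matching the two filtrations.} This is the main obstacle. One has to prove by induction on $n$ that the Witt-vector (or $W_n\Omega$) obstruction class produced in Step 1 coincides, up to a unit, with the image of the Hodge line under the $n$-th iterate of Frobenius modulo $K_{n-1}$. My first attempt would be to compute both via the Katz/Stienstra-type description of Frobenius on the Jacobian ring of $f$. Using $\omega/f^m$ representatives and Griffiths reduction of pole order, I would identify multiplication by $f^{p-1}$ and the $\Delta$-terms with Frobenius on $H^2_{\dR}$ modulo $p^n$. The coefficient $\frac{2}{p^n}$ is what makes the relevant forms have pole order $2$, so that they land in $H^1(\Omega^1)$ rather than in $H^2(\cO)$.

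Once the induction identifies "not $n$-quasi-$F$-split" with "the chain of Ogus terminates at $n$" for $n\le 9$, the theorem follows. The case $\ns(f)\ge 10$ forces $\tau=10$, because $\tau\le 10$ always holds and $\tau\le 9$ would give $\ns(f)=\tau\le 9$.
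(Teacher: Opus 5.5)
Your proposal has a genuine gap. All of the content sits in Step~3, and there you describe what you would try rather than give an argument. Nothing in Steps~1--3 actually relates failure of $n$-quasi-$F$-splitting of $(A,(1-\frac{2}{p^n})\operatorname{div}(f))$ to crystalline Frobenius on $H^2$ modulo $p^n$. The criterion in Step~1 is stated only as an expectation, and the displayed ``equivalence'' is not a well-formed condition. Identifying the Witt-vector obstruction with Frobenius iterates of the Hodge line modulo a characteristic flag, via Griffiths reduction and Katz-type formulas, would amount to redoing a substantial part of Ogus's crystalline analysis, and you give no argument for it.

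Step~2 is not usable as stated either. $\tau(X,L)$ equals $\sigma(X)$ or $\sigma(X)-1$ according as $L_{\dR}\notin E_{\sigma(X)}$ or $L_{\dR}\in E_{\sigma(X)}$. Your description (``the first index at which $F^2$ stops being contained in the next step'', restricted to $c_1(L)^{\perp}$) neither reproduces this nor explains how the polarization enters the comparison. That matters, since for quartics with $p=2$ the invariants $\tau$ and $\sigma$ may differ. Likewise, the cutoff at $9$/$10$ in the statement is not explained by the length of a flag. It comes from the fact that $\tau$ is detected only through the reduced strata $\mathcal{F}_{2D,h}$ with $h\le 10$.

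The idea you are missing is that no crystalline computation is needed. Ogus's theorem (\Cref{thm:Ogus}) already turns $\tau$ into a statement about singularities of height strata: for $h\le 10$, $\mathcal{F}_{2D,h}$ is reduced, l.c.i.\ of codimension $h-1$, and singular at $(X,L)$ exactly when $X$ is supersingular with $\tau(X,L)<h$.

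\begin{itemize}
\item The paper pulls this back along the smooth map from the space of equations $B'_{2D}\to\mathcal{F}_{2D}$.
\item Using the Fedder-type criterion for the height, Yobuko's theorem and a density argument, it shows that $B'_{2D,h+1}$ is cut out scheme-theoretically by the coefficient polynomials $H_1(s,0),\dots,H_h(s,0)$ for $h\le 9$. Hence $\tau$ is the first $n$ at which the Jacobian of $(H_1,\dots,H_n)$ in the coefficient variables drops rank.
\item On the other side, a Fedder-type criterion for the pair (\Cref{Fedder-ns}) gives $\ns(f)=\min\{n\mid \rank\mathcal{G}_n(b,0)\le n-1\}$ in the non-quasi-$F$-split case.
\item The key computation (\Cref{relation-M-L}) is $\partial H_n/\partial s_i\equiv -G_{n,i}$ modulo $(H_1,H_{n-1})$. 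So the matrix governing $\ns$ is minus the Jacobian of the stratum equations.
\end{itemize}

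This yields $\tau=\ns$ whenever either is at most $9$, and your final paragraph then finishes the argument.
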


Note that $\tau (X, \cO(1)) = \sigma (X)$ if $p \neq 2$ or $X$ contains a line.
Moreover, we prove a similar theorem for smooth sextic hypersurfaces in the weighted projective space $\P_k(1,1,1,3),$ in which case $\tau (X, \cO(1)) = \sigma (X)$ always holds.
Since $\ns (f)$ is defined for an arbitrary hypersurface,
Theorem~\ref{intro:AI-to-ns} suggests the possibility of extending the Artin invariant, or its analogue, to broader classes of varieties, such as higher-dimensional Calabi--Yau hypersurfaces and other classes of varieties in positive characteristic.

The case $\ns(f)=1$ is closely related to the result of Bhatt--Singh \cite{Bhatt-Singh}.
Indeed, by \cite{Bhatt-Singh}*{Theorem~1.1}, when $p\ge 5$ the condition $\ns(f)=1$ is equivalent to the statement that the Hasse invariant vanishes to order $2$ on the versal deformation space of $X\subset \mathbb P_k^3$.
This is in turn equivalent to the condition that the Artin invariant of $X$ is equal to $1$ (cf.\ \cite[Theorem 1]{OgusHasse} and \cite[Proposition 2.4]{ItoBrauer}).
Thus, Theorem~\ref{intro:AI-to-ns} may be viewed as a higher-level analogue of \cite{Bhatt-Singh}*{Theorem~1.1}.

A key ingredient in the proof of Theorem~\ref{intro:AI-to-ns} is an explicit description of the moduli stratification
\[
 \mathcal{F}_{2D, 10} \subset \cdots \subset \mathcal{F}_{2D, 1} = \mathcal{F}_{2D},
\]
which also yields a concrete formula for the Artin invariant in terms of the defining equation of the quartic surface.

More precisely, for a quartic polynomial \(f\), we construct explicitly a \(35\times 1\) column vector \(v_f\), a \(1\times 35\) row vector \(\lambda\), and a \(35\times 35\) matrix \(T\) from the coefficients of \(f\) and the Frobenius action (see Section~\ref{section:explicit-formula} for the precise definitions). 
To keep the formula in the introduction simple, we state it only in the case where \(f\) is a polynomial over \(\mathbb F_p\). 
Combining Theorem~\ref{intro:AI-to-ns} with the following theorem, we obtain the explicit formula for \(\tau(X,\mathcal O(1))\), which is well suited to computer algebra computations (see \cite{Takamatsu_code}).

\begin{theoremA}[\Cref{explicit-formula}]\label{intro:comp}
In the above setting, we have
\[
\sht(X)=\inf\{\,n\ge 1\mid \lambda T^{n-1} v_f \neq 0\,\},
\]
where we set $\inf \emptyset:=\infty$.

Furthermore, if $\lambda T^{n-1}v_f=0$ for every $n \ge 1$, then
\[
\ns(f)
=
\inf\left\{\,n\ge 1\ \middle|\ 
\operatorname{rank}
\begin{pmatrix}
\lambda\\
\lambda T\\
\vdots\\
\lambda T^{n-1}
\end{pmatrix}
\le n-1
\right\}.
\]
\end{theoremA}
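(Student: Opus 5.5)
Both assertions of Theorem~\ref{intro:comp} come from translating the quasi-$F$-split conditions, for $A$ and for the pairs $(A,(1-\frac{2}{p^n})\operatorname{div}(f))$, into linear algebra on the $35$-dimensional space $V$ of degree-$4$ monomials in $x,y,z,w$. In \cite{kty} the $n$-quasi-$F$-splitting of $A/f$ is governed by an iterated operator built from $f^{p-1}$ and the $\Delta_1$-term $\Delta_1(f^{p-1})$, followed by the splitting functional. I would take $T$ to be the $\F_p$-linear (for $f$ over $\F_p$) operator on $V$ given by $g\mapsto u\bigl(F_*(g\,\Delta_1(f^{p-1}))\bigr)$, where $u$ is the trace generator of $\Hom(F_*A,A)$. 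I would take $v_f$ to be the vector of coefficients of $u(F_*f^{p-1})$ in degree $4$, and $\lambda$ the functional picking out the coefficient of $(xyzw)^{p-1}$ (after one further $u$). The first formula is then the Fedder-type criterion of \cite{kty} combined with Yobuko's theorem $\sht(X)=\mathrm{ht}^{qFs}(A/f)$. I would reprove it only to the extent of checking that the coordinates are normalized so that the level-$n$ obstruction is exactly $\lambda T^{n-1}v_f$.

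For the second formula, I would first prove a Fedder-type criterion for the pair $(A,(1-\frac{2}{p^n})\operatorname{div}(f))$ at level $n$. Twisting by the divisor changes the input of the level-$n$ splitting map from $f^{p-1}$ to $f^{p^n-2}$, which is spread over the Witt-vector levels. The key step is to show that $(A,(1-\frac{2}{p^n})\operatorname{div}(f))$ is $n$-quasi-$F$-split if and only if there is some $g\in V$ with $\lambda T^{i}g$ prescribed for $0\le i\le n-1$: namely $\lambda T^{n-1}g\ne 0$ and the lower ones suitably constrained. Equivalently, the map $V\to k^n$, $g\mapsto(\lambda T^i g)_{i<n}$, is surjective, i.e.\ the $n\times 35$ matrix $(\lambda;\lambda T;\dots;\lambda T^{n-1})$ has rank $n$.

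The freedom in $g$ comes from the $\Delta_1$-correction terms. The factor $f^{p^n-2}=f^{p^n-1}\cdot f^{-1}$ frees one degree-$4$ slot, and the choice of that slot is exactly a vector in $V$. Failure of splitting is then the failure of rank $n$, giving
\[
\ns(f)=\inf\{n\mid \operatorname{rank}(\lambda;\dots;\lambda T^{n-1})\le n-1\}.
\]
The hypothesis $\lambda T^{n-1}v_f=0$ for all $n$ (supersingularity) is what makes the terms involving $v_f$ drop out, so that only the row space matters.

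The main obstacle is this pair-version Fedder computation: expanding the level-$n$ Witt-vector splitting with the fractional boundary, and showing that the cross terms collapse to the iterates $\lambda T^i$ exactly when all $\lambda T^{i}v_f$ vanish. I would first do $n=1,2$ by hand to fix the normalizations, then argue by induction on $n$, using the inductive structure of quasi-$F$-splitting via $W_n\to W_{n-1}$ from \cite{kty}.
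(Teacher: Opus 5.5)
Your skeleton matches the paper's: a Fedder-type criterion for the pairs $(A,(1-\frac{2}{p^n})\operatorname{div}(f))$ (Theorem~\ref{Fedder-ns}), then a translation into linear algebra on degree-$d$ forms, where supersingularity kills the $f^{p-1}A$-contributions, then a rank count (Theorem~\ref{rank-chara-ns}). The gap is the dictionary itself, and that dictionary is the real content of the statement.

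The operator $a\mapsto u(F_*(\Delta(f^{p-1})a))$ of \cite{kty} is an endomorphism of $A_{(p-1)d}$, not of $V=A_d$. For $g\in V$ the element $u(F_*(g\,\Delta(f^{p-1})))$ would have degree $(p^2-2p+2)d/p$. For $d=4$ this is not an integer when $p$ is odd, so your $T$ is identically zero for quartics in odd characteristic. Likewise $u(F_*f^{p-1})$ has degree $0$: it is the Hasse invariant, i.e.\ the paper's $\lambda v_f$, not a vector in $V$. Your $\lambda$ also lacks the factor $f^{p-2}$; a quartic form has no $(xyzw)^{p-1}$-coefficient to read off when $p>2$. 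Only for $p=2$ do your $T$ and $\lambda$ coincide with the paper's. So the normalization check you plan for the first formula would fail, and your setup contains no $35\times 35$ matrix governing the iteration.

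The missing idea is the factorization $\Delta(f^{p-1})\equiv -f^{p(p-2)}\Delta(f)$ modulo $F(A)$. It turns the operator of \cite{kty} into (minus) $\theta_f(F_*a)=f^{p-2}u(F_*(\Delta(f)a))$, and for $a\in A_{(p-1)d}$ this lies in $f^{p-2}\cdot V$. On $A_{(p-1)d}$, both $\Ker(u)$ and $\m^{[p]}$ consist of the forms with vanishing $(x_0\cdots x_N)^{p-1}$-coefficient. Hence all chains defining $I_n(f)$ and $I^{\ns}_n(f)$ live, in the relevant degree $(p-1)d$, inside $f^{p-2}V\cong V$:
the term $f^{p-1}=f^{p-2}\cdot f$ is $v_f$, the coefficient vector of $f$ itself;
the initial ideal $f^{p-2}A$ of the pair gives all of $V$, which is the precise form of your ``freed slot'';
the step is $T\colon h\mapsto u(F_*(\Delta(f)f^{p-2}h))$;
the test is $\lambda\colon h\mapsto u(F_*(f^{p-2}h))$.

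Then $\lambda T^{n-1}h=u^n(F^n_*(G_nh))$ with $G_n=f^{p-2}(f^{p(p-2)}\Delta(f))^{1+p+\cdots+p^{n-2}}$. At $h=f$ this gives the first formula via Proposition~\ref{variant-fedder}. You also need the cone comparison $\sht(X)=\sht(A/fA)$ of \cite{KTY2}; Yobuko's theorem only identifies the quasi-$F$-split height of $X$ with its Artin--Mazur height. The rows $\lambda T^{i}$ form $\mathcal{G}_n(b,0)$.

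Two smaller points. First, the pair criterion needs no new induction. Only the top Witt component changes (pole order $p^n-2$ instead of $p^n-1$), so maps $Q_{D_n,n}\to A$ are the $\psi$ with $\psi(a_0f,\ldots,a_{n-2}f,a_{n-1}f^2)\in fA$. The argument of \cite{kty}*{Theorem~4.11} then yields the usual recursion, started at $f^{p-2}A$.

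Second, your ``equivalently, $g\mapsto(\lambda T^ig)_{i<n}$ is surjective'' needs justification. $n$-quasi-$F$-splitting of the pair only says that $\lambda T^{n-1}$ is not in the span of $\lambda,\ldots,\lambda T^{n-2}$. The equivalence holds because the rows are produced by iterating the semilinear map $r\mapsto F(r)T$: once one row falls into the span of its predecessors, all later rows do too. The paper argues level by level instead.
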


We also prove analogous comparison results for weighted hypersurfaces and for higher-dimensional Calabi--Yau hypersurfaces.
Thus, Theorem~\ref{intro:comp} is part of a more general pattern in which Frobenius-theoretic invariants of hypersurfaces admit explicit descriptions in terms of linear algebra attached to the defining equation.

Using the splitting-order sequence introduced in \cite{Yoshikawa25-cri}, we define a mixed-characteristic analogue of the non-splitting index.
This should be viewed as an extension of the mixed-characteristic quasi-$F$-split height studied in \cites{Yoshikawa25,OTY25}.
More precisely, those works prove that, for Calabi--Yau hypersurfaces, the Artin--Mazur height in positive characteristic coincides with the quasi-$F$-split height of a lift in mixed characteristic.
In the same spirit, we show that the non-splitting index can also be compared in positive and mixed characteristic.
By comparing the positive- and mixed-characteristic non-splitting indices, we obtain the following result, which generalizes an argument from \cite{TY26}.

\begin{theoremA}[\Cref{AI-to-example}, cf.~\cite{TY26}]\label{intro:AI-to-example}
Let $X:=(f=0)\subset \mathbb P^3$ be a quartic K3 surface over an algebraically closed field $k$ of characteristic $p>0$.
If $X$ is supersingular with Artin invariant $n$, then the ring
\[
k[x,y,z,w,t]/(t^{p^n}+f)
\]
is not quasi-$F$-split.
\end{theoremA}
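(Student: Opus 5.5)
We plan to deduce Theorem~\ref{intro:AI-to-example} by comparing the positive-characteristic non-splitting index $\ns(f)$ with a mixed-characteristic analogue, and then applying an inversion-of-adjunction style argument to the cyclic cover $t^{p^n}+f$.

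\emph{Step 1: bound $\ns(f)$ by the Artin invariant.} Since $X$ is supersingular with $\sigma(X)=n$, Theorem~\ref{intro:AI-to-ns} together with $\sigma(X)\ge\tau(X,\cO(1))$ (or equality, when available) gives $\ns(f)\le n$ when $n\le 9$. When $n=10$ we only know $\ns(f)\ge 10$, so this case needs separate care. By the definition of $\ns$, the pair $(A,(1-\tfrac{2}{p^m})\operatorname{div}(f))$ is then not $m$-quasi-$F$-split for some $m\le n$. For general $\sigma$, including $\sigma=10$, we would instead argue directly that $(A,(1-\tfrac{2}{p^n})\operatorname{div}(f))$ is not $n$-quasi-$F$-split. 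We expect this to follow from the inequality $\ns(f)\le\tau$ established in the proof of \Cref{ns-vs-K3-tau}.

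\emph{Step 2: pass from non-$m$-quasi-$F$-splitting to non-quasi-$F$-splitting.} Quasi-$F$-splitting of level $m$ only gets easier as $m$ grows, so a single failing level does not show that the pair fails at every level. Here we use the mixed-characteristic comparison. Lift $f$ to $W(k)$ and consider the splitting-order sequence of \cite{Yoshikawa25-cri}. The mixed-characteristic non-splitting index agrees with $\ns(f)$, and it controls quasi-$F$-splitting of the $p^n$-cyclic cover. The point is that for the ring $B=A[t]/(t^{p^n}+f)$, adjoining $t$ with $t^{p^n}=-f$ acts like a fractional boundary $(1-\tfrac{1}{p^n})\operatorname{div}(f)$ on $A$. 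Since $K_A+\operatorname{div}(f)$ has degree $0$, the boundary differs from $(1-\tfrac{2}{p^n})\operatorname{div}(f)$ by a shift that is exactly matched by the extra coordinate. So we would prove: $B$ is $m$-quasi-$F$-split if and only if the corresponding pair on $A$ with coefficient governed by $p^{n}$ is, via the decomposition $F_*W_mB=\bigoplus_i F_*W_mA\cdot[t]^i$ and projection onto the $i=0$ graded piece. Following \cite{TY26}, the index $n$ matching $p^n$ ensures that non-splitting at level $\ns(f)\le n$ propagates to all levels $m\ge n$. The mechanism is that the splitting-order sequence stabilizes: once the obstruction class $\lambda T^{m-1}$ fails to be independent, it remains so. In matrix language, the rank condition in Theorem~\ref{intro:comp} is stable under increasing $m$.

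\emph{Step 3: conclude.} Combining these, $B$ is not $m$-quasi-$F$-split for every $m$, hence not quasi-$F$-split.

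The main obstacle is Step 2. We must show that failure at one level forces failure at all levels for the cyclic cover. This is where the mixed-characteristic invariant is essential, since it is level-independent. It is also where the exact exponent $p^n$, tied to $\sigma=n$, must match the $p^{n}$ in $\ns$. We would first try to establish the graded-piece reduction for $W_m$ of the cover, and then verify monotonicity of non-splitting using the stabilization of the linear-algebra data $(\lambda,T)$.
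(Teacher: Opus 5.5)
\textbf{There is a genuine gap, and it is in Step~2, which carries the whole content of the theorem.}

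The graded-piece reduction rests on the decomposition $F_*W_mB=\bigoplus_i F_*W_mA\cdot[t]^i$. This is false for $m\ge 2$, because Witt vector addition is not componentwise. For instance, $V[t]\in W_m(B)$ is not in the $W_m(A)$-span of the $[t]^i$. Indeed, a combination $\sum_{i<p^n}w_i[t]^i$ with vanishing $0$-th component has every $w_i\in VW_{m-1}(A)$. It therefore equals $V\bigl(\sum_i w_i'[t]^{pi}\bigr)$, whose first component lies in $A[t^p]$, and $A[t^p]$ does not contain $t$. So the claimed equivalence between $m$-quasi-$F$-splitting of $B$ and of a pair on $A$ is unproved. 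The boundary computation behind it is only a heuristic for this purely inseparable cover.

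More seriously, the propagation argument conflates the level with the exponent. The stability you cite says that once $R_n$ lies in the span of $R_1,\dots,R_{n-1}$, that span is stable under $r\mapsto F(r)T$. This concerns the pairs $\bigl(A,(1-\tfrac{2}{p^m})\operatorname{div}(f)\bigr)$, whose coefficient moves with the level $m$. For $B$ the exponent $p^n$ is fixed, and at levels $m>n$ the $t$-dependent terms of $\Delta(t^{p^n}+f)$ can contribute. Nothing in your sketch controls them, so Step~3 (failure at every level) is unsupported.

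\textbf{The missing idea} is that the paper never analyzes $W_m(B)$ in characteristic $p$. The level-independence is obtained in mixed characteristic and then pushed down, in three steps.
\begin{enumerate}
\item By \Cref{compare-ns-positive-mixed}, one chooses a lift $\wt f$ with $\ns(\wt f)=\ns(f)$. The choice matters: by \Cref{possible-ns}, other lifts can have $\ns(\wt f)=\infty$. So your claim that the mixed-characteristic index ``agrees with $\ns(f)$'' holds only for the minimum over lifts.
\item One applies \Cref{explicit-n}. By \Cref{characterization}(3), finiteness of the mixed-characteristic index is a membership test on the single elements $\wt f_i$. For $\wt g=T^{p^n}+\wt f$ one has $\wt g_i\equiv\wt f_i$ modulo $(T^{p^n},p)$. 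Hence $\ns(\wt g)=\ns(\wt f)<\infty$ as soon as $\ns(\wt f)\le n$. A finite mixed-characteristic index already excludes quasi-$F$-splitting at all levels, by the final assertion of \Cref{characterization}.
\item \cite{Yoshikawa25}*{Proposition~4.6} transfers non-quasi-$F$-splitting from $W(k)[x,y,z,w,t]/(t^{p^n}+\wt f)$ to $k[x,y,z,w,t]/(t^{p^n}+f)$. This descent step is absent from your plan.
\end{enumerate}

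\textbf{On Step~1:} your concern about $\sigma(X)=10$ is justified, but the fix you propose is not available. The proof of \Cref{ns-vs-K3-tau} gives $\ns(f)=\tau(X,\cO(1))$ only when one of them is at most $9$. When $\tau(X,\cO(1))=10$, it yields only $\ns(f)\ge 10$. The paper's proof needs a lift with $\ns(\wt f)\le n$ (the inequality displayed there is reversed), and it does not treat this case separately either.
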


Applying this comparison to weighted sextic K3 surfaces in characteristic two, we obtain the following constraint on the Artin invariant, which is an analogue of Degtyarev's observation (\cite{Degtyarev}*{Remark 7.7}) on quartic surfaces.

\begin{theoremA}[\Cref{no-sigma-geq-3}]
Let $X$ be a smooth sextic hypersurface in $\mathbb P_k(1,1,1,3)$ in characteristic two.
If $X$ is supersingular, then $\sigma(X)\ge 3$.
\end{theoremA}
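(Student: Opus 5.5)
The plan is to combine the weighted analogue of Theorem~\ref{intro:AI-to-ns} with the mixed-characteristic comparison behind Theorem~\ref{intro:AI-to-example}. Write $X=(f=0)\subset \P_k(1,1,1,3)$ with $A=k[x,y,z,w]$, $\deg w=3$, and $f=w^2+wg_3(x,y,z)+g_6(x,y,z)$ (after rescaling $w$). Since $X$ is a sextic, $\tau(X,\cO(1))=\sigma(X)$ always holds, as noted after Theorem~\ref{intro:AI-to-ns}. The weighted version of that theorem then gives $\sigma(X)=\ns(f)$ whenever $\ns(f)\le 9$. It therefore suffices to show that, in characteristic two, the pair $(A,(1-\tfrac{2}{2^n})\operatorname{div}(f))$ is $n$-quasi-$F$-split for $n=1,2$, i.e.\ $\ns(f)\ge 3$. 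Note that for $n=1$ the boundary coefficient is $1-2/2=0$, so the condition is just that $A$ is $F$-split; this holds trivially since $A$ is a polynomial ring. So $\ns(f)\ge 2$ automatically, and hence $\sigma(X)\ge 2$.

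The real content is the case $n=2$, where the divisor is $\tfrac12\operatorname{div}(f)$ and we must show that $(A,\tfrac12 \operatorname{div}(f))$ is $2$-quasi-$F$-split. Rather than doing this directly, I would argue by contradiction using Theorem~\ref{intro:AI-to-example} (its weighted version). Suppose $\sigma(X)=n\le 2$. Then $R=k[x,y,z,w,t]/(t^{2^n}+f)$ would be non-quasi-$F$-split. I would contradict this by showing directly that $R$ is quasi-$F$-split for $n=1,2$.

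The characteristic-two structure should make this possible. In $t^{2^n}+w^2+wg_3+g_6$, the terms $t^{2^n}$ and $w^2$ are both squares, and the substitution $u=t^{2^{n-1}}+w$ turns the equation into $u^2+wg_3+g_6$, up to the presence of $t$. For $n=1$, $R\cong k[x,y,z,w,u]/(u^2+wg_3+g_6)$, which is a hypersurface with $w$ appearing linearly. When $g_3\neq 0$ generically, one can solve for $w$ and the ring becomes essentially a polynomial ring, hence $F$-split. For $n=2$, I would apply Fedder-type criteria for quasi-$F$-splitting (the criteria underlying Theorem~\ref{intro:comp}, in the version of \cite{kty}) to $F=t^4+f$. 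The test is whether $\Delta_1(F)F^{p-1}=\Delta_1(F)F$ has a monomial outside $\m^{[p^2]}$, and I expect the mixed term $t^4\cdot w g_3$ from $\Delta_1$ to produce a term like $(t^2 w\cdots)^{\ldots}$ witnessing splitting.

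The main obstacle is making this uniform. The cases where $g_3$ is degenerate (e.g.\ $g_3=0$ forces $X$ non-smooth in characteristic two, which I would verify first via the partial derivative $\partial f/\partial w=g_3$) and the exact bookkeeping of the quasi-$F$-split Fedder criterion at level $2$ both require care. If the direct Fedder computation becomes messy, my fallback is to compute $\ns(f)$ directly: show $(A,\tfrac12\operatorname{div}f)$ is $2$-quasi-$F$-split by exhibiting a monomial of $\Delta_1(f)$ times a suitable power of $f$, namely the term $w^2\cdot(wg_3)$, lying outside $\m^{[4]}$ in the relevant degree.
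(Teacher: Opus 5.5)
Your reduction is fine: $\tau=\sigma$ for sextics, Theorem~\ref{ns-vs-K3-tau} reduces the claim to $\ns(f)\ge 3$, and $\ns(f)\ge 2$ is automatic. Your main route through Theorem~\ref{intro:AI-to-example}, however, fails.

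\textbf{The case $n=1$.} The ring $R_1=k[x,y,z,w,t]/(t^2+f)$ is \emph{not} $F$-split. By Fedder's criterion, $F$-splitting would need $t^2+f\notin(x^2,y^2,z^2,w^2,t^2)$, that is, $f\notin\m^{[2]}$. That says $X$ is ordinary, which contradicts supersingularity. ``Solving for $w$'' is only possible after inverting $g_3$, and that says nothing at the vertex.

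\textbf{The case $n=2$.} Here $\Delta(t^4+f)=t^4f+\Delta(f)$, so every $t$-term of $(t^4+f)\Delta(t^4+f)$ is divisible by $t^4$. Hence $(t^4+f)\Delta(t^4+f)\equiv f\Delta(f)\pmod{\m^{[4]}}$. But $f\Delta(f)\in\m^{[4]}$ precisely because $\sht(X)>2$ (Proposition~\ref{variant-fedder}). So the term $t^4wg_3$ you hope for lies in $\m^{[4]}$, and $R_2$ is never $2$-quasi-$F$-split here. Any higher-level analysis would have to come back to $\Delta(f)$ anyway, so the detour buys nothing.

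\textbf{The fallback.} This is the right idea and is essentially the paper's argument. The paper runs it on lifts: $\ns(\wt f)=2$ forces $\Delta(\wt f)\in\m^{[4]}$ by Proposition~\ref{characterization}, and it concludes via Theorem~\ref{compare-ns-positive-mixed}. You can also do it directly in characteristic $2$ with Theorem~\ref{rank-chara-ns}, as follows.
\begin{itemize}
\item The relevant power of $f$ is $f^{p-2}=1$, so the first row of $\cG_2(b,0)$ is supported only on the column of $xyzw$.
\item The second row lists the coefficients in $\Delta(f)$ of $(xyzw)^3/M$, for $M$ ranging over degree-$6$ monomials.
\item We have $\Delta(f)=w^3g_3+w^2g_6+wg_3g_6+w^2\Delta(g_3)+\Delta(g_6)$, well defined modulo squares, which never contain $w^3$.
\item The $w^3$-part of $\Delta(f)$ is exactly $w^3g_3$. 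So each monomial $N$ of $g_3$ gives a nonzero entry in the column $M=(xyz)^3/N\neq xyzw$.
\end{itemize}
Hence $\operatorname{rank}\cG_2(b,0)=2$ and $\ns(f)\ge 3$.

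\textbf{What is still missing.} You have not shown $g_3\neq 0$. You also need, beforehand, that the coefficient of $w^2$ is nonzero, since otherwise $(0:0:0:1)$ is singular.

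Observing $\partial f/\partial w=g_3$ is not enough. If $g_3=0$, you must still produce a point of $X$ where the partials of $g_6$ with respect to $x,y,z$ all vanish. The paper does this as follows.
\begin{itemize}
\item Euler's relation gives $x\partial_xg_6+y\partial_yg_6+z\partial_zg_6=6g_6=0$, so the three partials form a section of $\Omega^1_{\P^2}(6)$.
\item Since $c_2(\Omega^1_{\P^2}(6))=21H^2\neq 0$, this section has a zero $[a_0:a_1:a_2]$.
\item Choosing $b$ with $b^2=g_6(a_0,a_1,a_2)$, the point $[a_0:a_1:a_2:b]$ is a singular point of $X$.
\end{itemize}
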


The paper is organized as follows.
In \Cref{Section:Fedder}, we recall and reformulate the Fedder-type criteria for quasi-$F$-splitting.
In \Cref{Section:nspositive}, we study the non-splitting index in positive characteristic.
We first establish a Fedder-type criterion for computing the non-splitting index, and then prove \Cref{intro:comp}.
In \Cref{Section:Ogus}, we recall results of Ogus on K3 surfaces and their moduli.
We then study families of smooth K3 hypersurfaces.  We compare the non-splitting index with $\tau(X,\cO(1))$ for quartic and weighted sextic K3 surfaces, and prove \Cref{intro:AI-to-ns}.
In \Cref{section:mixed}, we introduce the non-splitting index in mixed characteristic via the splitting-order sequence.
We compare the positive- and mixed-characteristic non-splitting indices, and also discuss the possible values of the non-splitting index for lifts (\Cref{compare-ns-positive-mixed}).
Finally, in Section~\ref{Section:example}, we give several explicit examples. 
We exhibit equations of K3 surfaces with Artin invariant $3,4,\dots,9$ in characteristic $2$  and with Artin invariant $1,2,\dots,10$ in characteristic $3$. 
We also construct a smooth Calabi–Yau threefold whose defining equation has non-splitting index 58, and we generalize Shioda--Goto’s formula for the Artin invariant of Delsarte K3 surfaces (\Cref{Delsarte}).

\begin{remark}\label{rmk:computation-Artin-invariant}
The Artin invariant has also been computed and studied for several special classes of K3 surfaces.
For example, \cite{Shiodaexplicit} and \cite{Goto96} gave explicit formulas for K3 surfaces of (weighted) Delsarte type.
Moreover, \cite{Rudakov-Shafarevich} constructed, in characteristic \(2\), families of supersingular K3 surfaces realizing each possible Artin invariant with explicit Weierstrass equations.
In addition, \cite{Shiodalecturenote} proves the existence of supersingular K3 surfaces with all the possible Artin invariants for $p>2$ (without explicit equations).
Furthermore, \cite{Shimadachar2algo}*{Algorithm 9.4} (cf.\ \cite{Shimadachar2} and \cite{ShimadaModulicurves})  gives an algorithm for computing the Artin invariant of supersingular K3 surfaces in characteristic 2 from their birational models given as a sextic purely inseparable double plane (cf.\ \cite{Blass-Lang}).
On the other hand, \cite{Ito-supersingularreduction} calculates heights and Artin invariants of the reduction modulo $p$ of K3 surfaces with complex multiplication.
Also, see  \cite{ShimadaSupersingularodd}, \cite{Shimadaprojectivemodelchar5},  \cite{Shimada-Zhang}, \cite{Shimada-ZhangKummer}, \cite{KatsuraSchutt} and \cite{Pho-Shimada} for related results.
\end{remark}

\begin{ackn}
The authors are grateful to Yuya Matsumoto for helpful comments.
The first author was supported by JSPS KAKENHI Grant Number JP25K17228.
The second author was supported by JSPS KAKENHI Grant number JP24K16889.
\end{ackn}

\section{Fedder-type criterion for quasi-$F$-splitting}
\label{Section:Fedder}

In this section, we recall the Fedder-type criterion for quasi-$F$-splitting developed in \cites{kty,KTY2}, together with the notation involved, and slightly reformulate it in a form suitable for our applications.

\begin{notation}\label{conv:local}
Let $(A,\m)$ be an $F$-finite regular local ring of characteristic $p>0$, and let $f \in A$ be a nonzero element.
Let
\[
\Delta \colon A \to A/F(A)
\]
denote the map $\Delta_1$ defined in \cite{kty}*{Section~3.1}.
Let $u \in \Hom_A(F_*A,A)$ be a generator of $\Hom_A(F_*A,A)$ as an $F_*A$-module.
We define an $A$-module homomorphism
\[
\theta_f \colon \Ker(u) \to A,
\qquad
a \mapsto f^{p-2}u(F_*(\Delta(f)a)).
\]
We then define an increasing sequence $\{I_n(f)\}_{n\ge1}$ of ideals by
\[
I_1(f):=f^{p-1}A
\]
and
\[
I_{n+1}(f):=\theta_f(F_*I_n(f)\cap \Ker(u))+f^{p-1}A
\]
inductively.

Note that
\[
\Delta(f^{p-1})=- f^{p(p-2)}\Delta(f) \quad \text{in } A/F(A)
\]
by \cite{kty}*{Proposition~3.8(4)}.
Hence, the homomorphism $\theta_f$ is the negative of the map $\theta$ defined in \cite{kty}, and in particular the ideals $I_n(f)$ coincide with the ideals $I_n$ in \cite{kty}.
Therefore, by \cite{kty}*{Theorem~A}, the quasi-$F$-splitting height of $A/fA$ is given by
\[
\sht(A/fA)=\inf\{\,n\ge1 \mid I_n(f)\nsubseteq \m^{[p]}\,\},
\]
where we set $\inf \emptyset := \infty$.
\end{notation}

\begin{notation}\label{conv:polynomial}
Let $k$ be a perfect field, and set
\[
S:=k[x_0,\ldots,x_N].
\]
We define a map
\[
\Delta \colon S \to S
\]
as follows.
For $a\in S$, write
\[
a=\sum_{i=1}^m c_iM_i
\]
as its monomial decomposition, where the $M_i$ are distinct monomials with coefficients 1 and $c_i\in k$.
We then define
\[
\Delta(a):=
\sum_{\substack{0 \leq \alpha_1, \ldots,\alpha_m \leq p-1 \\
\alpha_1+\cdots+\alpha_m=p}}
\frac{1}{p}\binom{p}{\alpha_1, \ldots ,\alpha_m}
(c_1M_1)^{\alpha_1}\cdots(c_mM_m)^{\alpha_m}.
\]
As in \cite{kty}*{Convention~5.1}, this fits into the commutative diagram
\[
\begin{tikzcd}
    S \arrow[r,"\Delta"] \arrow[d] & S \arrow[r] & S/F(S) \arrow[d] \\
    A \arrow[rr,"\Delta"] & & A/F(A),
\end{tikzcd}
\]
where $A:=S_{(x_0,\ldots,x_N)}$.

Let $u \in \Hom_S(F_*S,S)$ be the homomorphism defined in \cite{kty}*{Convention~5.1}, and define
\[
\theta_f \colon F_*S \to S,
\qquad
a \mapsto  f^{p-2}u(F_*(\Delta(f)a)).
\]

Now assume that $S$ is $\mathbb Z$-graded with $\deg(x_i)>0$ for all $i$, and let $f\in S$ be a homogeneous element satisfying
\[
\deg(f)=\deg(x_0)+\cdots+\deg(x_N).
\]
Since
\[
\Delta(f^{p-1}) \equiv -f^{p(p-2)}\Delta(f) \pmod{F(S)},
\]
it follows from \cite{kty}*{Theorem~C} that
\begin{align*}
\sht(S/fS)=
\inf\{\,n\ge1 \mid \theta_f^{\,n-1}(F_*^{\,n-1}f^{p-1}) \notin \m^{[p]}\,\},
\end{align*}
where $\m:=(x_0,\ldots,x_N)$.

Furthermore, by \cite{KTY2}*{Proposition~3.9 and Remark~3.11(1)}, if $S$ is a $\Z$-graded ring with $\deg (x_i) >0$ for all $i$ such that $\Proj(S)$ is a well-formed weighted projective space and $N\ge2$, then
\[
\sht(\Proj(S/fS))=\sht(S/fS)
\]
for any homogeneous element $f \in S$ such that the smooth locus of $\Proj (S)$ contains any codimension one point of $\Proj (S/fS)$.
We note that if $\cO_{\Proj(S)}(-\deg(f))$ is invertible and $\Proj (S/fS)$ is normal, then the final condition is automatically satisfied.
\end{notation}

\begin{proposition}\label{variant-fedder}
We use the notation in Convention~\ref{conv:polynomial}.
Now assume that $S$ is $\mathbb Z$-graded with $\deg(x_i)>0$ for all $i$, and let $f\in S$ be a homogeneous element satisfying
\[
\deg(f)=\deg(x_0)+\cdots+\deg(x_N).
\]
For a positive integer $n$ and $a \in k$, if $S/fS$ is not $n$-quasi-$F$-split, then
\[
\theta^{n-1}_f(F^{n-1}_*(f^{p-1}a)) \in \Ker(u) \cap \m^{[p]}.
\]
Furthermore, we have
\[
\sht(S/fS)
=
\inf\{\,n\ge1 \mid f^{p-1}(f^{p(p-2)}\Delta(f))^{1+\cdots+p^{n-2}} \notin \m^{[p^n]}\,\}.
\]
\end{proposition}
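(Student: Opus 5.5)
The plan is to reduce both assertions to the formula for $\sht(S/fS)$ in Convention~\ref{conv:polynomial}, using three ingredients: a degree count, the $p^{-1}$-linearity of $u$, and the explicit action of $u$ on monomials. Write $d:=\deg(f)=\sum_i\deg(x_i)$. I will use the following properties of the map $u$ of \cite{kty}*{Convention~5.1}.
\begin{itemize}
\item It satisfies $u(F_*(g^pb))=g\,u(F_*b)$.
\item On monomials, $u(F_*x^{\beta})=x^{(\beta-(p-1))/p}$ if all $\beta_i\equiv p-1 \pmod p$, and $u(F_*x^{\beta})=0$ otherwise.
\item Consequently, $u(F_*c\,x^\beta)=c^{1/p}u(F_*x^\beta)$ for $c\in k$.
\item For homogeneous $x$ with $u(F_*x)\neq0$, one has $\deg u(F_*x)=(\deg x-(p-1)d)/p$.
\end{itemize}

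\textbf{First assertion.} First I check degrees. If $a$ is homogeneous of degree $(p-1)d$, then $\Delta(f)a$ has degree $pd+(p-1)d$. Hence $u(F_*(\Delta(f)a))$ has degree $d$, and $\theta_f(F_*a)$ has degree $(p-2)d+d=(p-1)d$. By induction, $\theta_f^{n-1}(F_*^{n-1}(f^{p-1}a))$ is homogeneous of degree $(p-1)d$ (or zero). Since $\theta_f(F_*(ca))=c^{1/p}\theta_f(F_*a)$ for $c\in k$, we get
\[
\theta_f^{n-1}(F_*^{n-1}(f^{p-1}a))=a^{1/p^{n-1}}\theta_f^{n-1}(F_*^{n-1}f^{p-1}).
\]
So it suffices to treat $a=1$.

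If $S/fS$ is not $n$-quasi-$F$-split, then $\sht(S/fS)>n$. The formula in Convention~\ref{conv:polynomial} then gives $\theta_f^{n-1}(F_*^{n-1}f^{p-1})\in\m^{[p]}$. It remains to show this element lies in $\Ker(u)$. Applying $u$ to it yields an element of degree $0$, i.e.\ a constant. By the monomial formula, $u(F_*\m^{[p]})\subseteq \m$, and $\m$ contains no nonzero constants. Hence the image is $0$.

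\textbf{Second assertion.} Put $G:=f^{p(p-2)}\Delta(f)$, so that $\theta_f(F_*b)=u(F_*(Gb))$ because $f^{p-2}$ can be moved inside $u$. I claim by induction that
\[
\theta_f^{m}(F_*^mf^{p-1})=u^m\bigl(F_*^m\, f^{p-1}G^{1+p+\cdots+p^{m-1}}\bigr).
\]
For the inductive step, write $G\,u^m(F^m_*y)=u^m(F^m_*(G^{p^m}y))$. Then
\[
\theta_f(F_*\theta_f^m(F_*^mf^{p-1}))=u^{m+1}\bigl(F_*^{m+1}\, f^{p-1}G^{1+\cdots+p^{m}}\bigr).
\]
Set $x:=f^{p-1}G^{1+\cdots+p^{n-2}}$. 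By the formula in Convention~\ref{conv:polynomial}, it then suffices to prove
\[
u^{n-1}(F_*^{n-1}x)\notin\m^{[p]}\iff x\notin\m^{[p^n]}.
\]

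\textbf{Main step: the Fedder-type comparison.} The difficulty is that this equivalence is for a single element rather than for an ideal, so the usual Fedder lemma does not apply directly. I would handle it with the monomial formula together with degree counting.

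Iterating the monomial formula, $u^{n-1}$ sends $x^\beta$ to $x^{(\beta-(p^{n-1}-1))/p^{n-1}}$ when all $\beta_i\equiv p^{n-1}-1 \pmod{p^{n-1}}$, and to $0$ otherwise. This map is injective on the monomials it does not kill, and the coefficient $c$ becomes $c^{1/p^{n-1}}$.

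Now $x$ is homogeneous of degree $(p^n-1)d=\sum_i(p^n-1)\deg(x_i)$. A monomial of this degree with all exponents $\le p^n-1$ must therefore be exactly $\prod_i x_i^{p^n-1}$. Every other monomial of $x$ lies in $\m^{[p^n]}$ and maps to $0$ or into $\m^{[p]}$. The monomial $\prod_i x_i^{p^n-1}$ maps to $\prod_i x_i^{p-1}\notin\m^{[p]}$.

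Hence $u^{n-1}(F_*^{n-1}x)\notin\m^{[p]}$ if and only if $x$ contains the monomial $\prod_ix_i^{p^n-1}$ with nonzero coefficient, and this holds if and only if $x\notin\m^{[p^n]}$.
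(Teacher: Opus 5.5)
Your proposal is correct and follows essentially the same route as the paper. Both arguments use degree counting to see that the relevant elements are homogeneous of degree $(p-1)d$ (resp.\ $(p^n-1)d$), rewrite $\theta_f^{n-1}(F^{n-1}_*f^{p-1})$ as $u^{n-1}$ applied to $f^{p-1}(f^{p(p-2)}\Delta(f))^{1+\cdots+p^{n-2}}$, and conclude with the height formula coming from \cite{kty}*{Theorem~C}. The only difference is that you verify the single-element Fedder comparison, and the vanishing of $u$ on degree-$(p-1)d$ elements of $\m^{[p]}$, by hand from the monomial description of $u$, whereas the paper simply cites \cite{kty}*{Proposition~3.9}.
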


\begin{proof}
By \cite{kty}*{Theorem~C}, we have
\[
\theta^{n-1}_f(F^{n-1}_*(f^{p-1}a)) \in \m^{[p]}.
\]
Since the degree of $\theta^{n-1}_f(F^{n-1}_*(f^{p-1}a))$ is $(p-1)\deg(f)$, the first assertion follows from \cite{kty}*{Proposition~3.9}.

Next, we prove the final assertion.
Since the degree of
\[
f^{p-1}(f^{p(p-2)}\Delta(f))^{1+\cdots+p^{n-2}}
\]
is $(p^n-1)\deg(f)$, the element is contained in $\m^{[p^n]}$ if and only if
\[
u^{n-1}(F^{n-1}_*(f^{p-1}(f^{p(p-2)}\Delta(f))^{1+\cdots+p^{n-2}}))=\theta^{n-1}_f(F^{n-1}_*f^{p-1}) \in \m^{[p]}
\]
by \cite{kty}*{Proposition~3.9}.
Thus, the final assertion follows from \cite{kty}*{Theorem~C}.
\end{proof}

\section{Non-splitting index in positive characteristic}
\label{Section:nspositive}

\subsection{Fedder-type method for computing a non-splitting index}
\label{subsection:Fedder-type-method}
\begin{definition}\label{ns-positive}
Let $(A,\m)$ be an $F$-finite regular local ring of characteristic $p>0$, and let $f \in A$ be a nonzero element.
We define the \emph{non-splitting index} $\ns(f)$ of $f$ by
\[
\ns(f):=
\inf\Bigl\{ n \in \Z_{\geq 1} \ \Bigm| \ \text{$\bigl(A,\bigl(1-\frac{2}{p^n}\bigr)\operatorname{div}(f)\bigr)$ is not $n$-quasi-$F$-split} \Bigr\}
\]
if this set is nonempty, and set $\ns(f):=\infty$ otherwise.
\end{definition}

\begin{remark}\label{rmk:qFs}
Let $A$ and $f$ be as in \Cref{ns-positive}.
Let
\[
D_n:=\left(1-\frac{2}{p^n}\right)\mathrm{div}(f).
\]
We define a $W_n(A)$-module $Q_{D_n,n}$ by the following pushout diagram:
\[
\begin{tikzcd}
W_n(A) \arrow[r, "F"] \arrow[d, "\mathrm{Res}"] & F_*W_n(A) \arrow[r] & F_*W_n(A)(pD_n) \arrow[d] \\
A \arrow[rr] & & Q_{D_n,n}.
\end{tikzcd}
\]
Then the pair $(A,D_n)$ is $n$-quasi-$F$-split if and only if the evaluation map
\[
\Hom_A(Q_{D_n,n},A) \to A
\]
is surjective.
We refer to \cite{KTTWYY1} for details on quasi-$F$-splitting of pairs.
\end{remark}

\begin{theorem}\label{Fedder-ns}
Let $(A,\m)$ be an $F$-finite regular local ring of characteristic $p>0$, and let $f \in A$ be a nonzero element.
We use the notation introduced in Convention~\ref{conv:local}.
Define a sequence of ideals $\{I^{\ns}_n(f)\}_{n \geq 1}$ by
\begin{itemize}
    \item $I^{\ns}_1(f):=f^{p-2}A$;
    \item $I^{\ns}_{n+1}(f):=\theta_f\bigl(F_*(I_n^{\ns}(f) \cap \Ker(u))\bigr)+f^{p-1}A$.
\end{itemize}
Then
\[
\ns(f)=\inf\{n \geq 1 \mid I^{\ns}_n(f) \subseteq \m^{[p]}\},
\]
where the right-hand side is defined to be $\infty$ if the set is empty.
\end{theorem}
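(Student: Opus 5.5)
We would prove this by running the proof of the Fedder-type criterion of \cite{kty}*{Theorem~A} (the criterion behind the ideals $I_n(f)$ of Convention~\ref{conv:local}) with the pair $(A,D_n)$, $D_n=(1-\frac{2}{p^n})\operatorname{div}(f)$, in place of $A/fA$. By Remark~\ref{rmk:qFs}, $(A,D_n)$ is $n$-quasi-$F$-split if and only if the evaluation map $\Hom_A(Q_{D_n,n},A)\to A$ is surjective. Since $A$ is local, this holds if and only if some homomorphism $Q_{D_n,n}\to A$ sends $1$ to a unit. So the task is to describe the image of the evaluation map as an ideal and compare it with $\m^{[p]}$.

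First we would describe $\Hom_A(Q_{D_n,n},A)$ concretely, following \cite{kty}*{Section~3} and \cite{KTTWYY1}. Since $A$ is regular and $F$-finite, we filter $F_*W_n(A)(pD_n)$ by $V^iF_*W_{n-i}$. Each graded piece is $F^{i+1}_*A(\lfloor p^{i+1}D_n\rfloor)$, twisted by the divisor that the Witt-vector coefficients of $pD_n$ carry in level $i$. A homomorphism $Q_{D_n,n}\to A$ is then determined by compatible maps $F^{i+1}_*A(\cdots)\to A$. Duality identifies each of these with an element of $F^{i+1}_*A$, via the generator $u$.

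The compatibility conditions between consecutive levels are governed by the Witt addition polynomials. These introduce exactly the operator $\Delta$ and hence $\theta_f$, as in the computation leading to \cite{kty}*{Theorem~A}. The multiplicities should work out as follows.
\begin{itemize}
\item At the top level, $\lfloor p^nD_n\rfloor=(p^n-2)\operatorname{div}(f)$. After one application of $u$, this contributes the factor $f^{p-2}$ in place of the $f^{p-1}$ that appears for $A/fA$. This explains $I^{\ns}_1(f)=f^{p-2}A$.
\item At the lower levels the relevant coefficient is $p^{i}(1-2/p^n)$. This has round-up behaviour of the form $(p-1)$ times a power of $p$, exactly as for the hypersurface case. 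So the recursion $I^{\ns}_{n+1}=\theta_f(F_*(I^{\ns}_n\cap\Ker u))+f^{p-1}A$ is the same as for $I_n(f)$; only the initial ideal changes.
\end{itemize}
We would verify by induction on $n$ that the image of the evaluation map equals $u(F_*I^{\ns}_n(f))$, or equivalently that it lies in $\m$ exactly when $I^{\ns}_n(f)\subseteq\m^{[p]}$. The intersection with $\Ker(u)$ records that the lower-level map must vanish on the Frobenius image, as in \cite{kty}.

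Granting this, $(A,D_n)$ is $n$-quasi-$F$-split if and only if $I^{\ns}_n(f)\nsubseteq\m^{[p]}$. Taking the infimum over $n$ in Definition~\ref{ns-positive} gives the formula. No monotonicity is needed, because $\ns(f)$ is defined directly as the first $n$ for which splitting fails.

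The main obstacle is the middle step: tracking the floors $\lfloor p^{i+1}D_n\rfloor$ across Witt levels, and checking that the divisor twist changes only the first ideal and leaves the inductive step unchanged. We would handle this by writing $Q_{D_n,n}$ as an explicit extension, using the Teichmüller lift $[f]$ to trivialise the divisorial parts at each level. We would then reuse \cite{kty}*{Proposition~3.8} for the identity $\Delta(f^{p-1})=-f^{p(p-2)}\Delta(f)$, which converts the $f$-twisted $\Delta$-terms into $\theta_f$.
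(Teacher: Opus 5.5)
Your plan matches the paper's proof: the paper computes that $(a_0,\ldots,a_{n-1})\in W_n(pD_n)$ if and only if $a_i\in f^{-(p^{i+1}-1)}A$ for $i\le n-2$ and $a_{n-1}\in f^{-(p^n-2)}A$. It then rewrites homomorphisms $Q_{D_n,n}\to A$ as $\psi\in\Hom_A(Q_{A,n},A)$ with $\psi(a_0f,\ldots,a_{n-2}f,a_{n-1}f^2)\in fA$ and reruns \cite{kty}*{Lemma~4.10 and the proof of Theorem~4.11}, so that only the deepest condition changes to $h_n\in f^{p-2}A$ and the image of the evaluation map is $u(F_*I^{\ns}_n(f))$, exactly as you propose. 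One small correction: the lower-level floors are $\lfloor p^{i+1}(1-2/p^n)\rfloor=p^{i+1}-1$ (this includes $p=2$, $i=n-2$, where the value is already the integer $2^{n-1}-1$), not ``$(p-1)$ times a power of $p$''; your conclusion that these levels behave exactly as for $A/fA$ is nonetheless correct.
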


\begin{proof}
We set
\[
D_n:=\left(1-\frac{2}{p^n}\right)\mathrm{div}(f).
\]
It suffices to show that the image of the evaluation map
\[
\Hom_A\bigl(Q_{D_n,n},A\bigr) \to A
\]
coincides with $u(F_*I^{\ns}_n(f))$.

Since an element $(a_0,\ldots,a_{n-1}) \in W_n(\Frac(A))$ belongs to $W_n(pD_n)$ if and only if
\[
a_i \in (f^{p^{i+1}-1})^{-1}A \quad \text{for } 0 \leq i \leq n-2,
\qquad
a_{n-1} \in (f^{p^{{n}}-2})^{-1}A,
\]
a homomorphism $Q_{D_n,n} \to A$ corresponds to a homomorphism
\[
\psi \in \Hom_A(Q_{A,n},A)
\]
such that
\begin{equation}\label{eq:psi}
\psi(a_0f,a_1f,\ldots,a_{n-2}f,a_{n-1}f^2) \in fA
\quad
\text{for every } a_0,\ldots,a_{n-1} \in A.
\end{equation}

Thus, by the proof of \cite{kty}*{Theorem~4.11} together with \cite{kty}*{Lemma~4.10}, the condition \eqref{eq:psi} is equivalent to the existence of elements $h_1,\ldots,h_n \in A$ such that $h_n \in f^{p-2}A$, $h_i \in \Ker(u)$ for $i \geq 2$, 
\[
h_s-\theta_f(F_*h_{s+1}) \in f^{p-1}A \quad \text{for $1 \leq s \leq n-1$},
\]
and $\psi(1)=u(F_*h_1)$.
Therefore,
\[
\Im\!\left(\Hom_A\bigl(Q_{D_n,n},A\bigr)\to A\right)=u(F_*I^{\ns}_n(f)),
\]
as desired.
\end{proof}

\begin{proposition}\label{finite-ns-to-non-qfs}
Let $(A,\m)$ be an $F$-finite regular local ring of characteristic $p>0$, and let $f \in A$ be a nonzero element.
If $\ns(f)<\infty$, then $A/f$ is not quasi-$F$-split.
\end{proposition}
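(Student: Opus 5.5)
Our plan is to compare the two sequences of ideals $I^{\ns}_n(f)$ of \Cref{Fedder-ns} and $I_n(f)$ of Convention~\ref{conv:local}, show that they are nested, and read off the claim from the two Fedder-type criteria. Concretely, we will prove by induction on $n$ that
\[
I_n(f)\subseteq I^{\ns}_n(f)\qquad\text{for all } n\ge 1.
\]
For $n=1$ this is clear, since $I_1(f)=f^{p-1}A\subseteq f^{p-2}A=I^{\ns}_1(f)$. For the inductive step, suppose $I_n(f)\subseteq I^{\ns}_n(f)$. Then
\[
F_*I_n(f)\cap\Ker(u)\subseteq F_*I^{\ns}_n(f)\cap\Ker(u).
\]
Applying $\theta_f$ to both sides and adding $f^{p-1}A$ gives $I_{n+1}(f)\subseteq I^{\ns}_{n+1}(f)$, because the two recursions are literally identical except for the initial term.

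Next, suppose $\ns(f)=m<\infty$. By \Cref{Fedder-ns}, $I^{\ns}_m(f)\subseteq\m^{[p]}$. We want this to hold for every larger index as well, i.e.\ $I^{\ns}_n(f)\subseteq\m^{[p]}$ for all $n\ge m$. The cleanest route is to show the sequence $I^{\ns}_n(f)$ is decreasing, so that $I^{\ns}_n(f)\subseteq I^{\ns}_m(f)$ for $n\ge m$. The base case is $I^{\ns}_2(f)\subseteq f^{p-2}A=I^{\ns}_1(f)$. This holds because $\theta_f$ has image in $f^{p-2}A$ by its definition ($\theta_f(a)=f^{p-2}u(F_*(\Delta(f)a))$), and also $f^{p-1}A\subseteq f^{p-2}A$. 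Monotonicity then propagates by the same induction as before: if $I^{\ns}_{n}(f)\subseteq I^{\ns}_{n-1}(f)$, applying the recursion to both sides yields $I^{\ns}_{n+1}(f)\subseteq I^{\ns}_{n}(f)$. So we get $I^{\ns}_n(f)\subseteq\m^{[p]}$ for all $n\ge m$.

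Combining with the first step, $I_n(f)\subseteq I^{\ns}_n(f)\subseteq\m^{[p]}$ for all $n\ge m$. The sequence $I_n(f)$ is increasing (Convention~\ref{conv:local}), so for $n<m$ we also have $I_n(f)\subseteq I_m(f)\subseteq\m^{[p]}$. Thus $I_n(f)\subseteq\m^{[p]}$ for every $n$, and by the formula $\sht(A/fA)=\inf\{n\mid I_n(f)\nsubseteq\m^{[p]}\}$ quoted from \cite{kty}*{Theorem~A} we obtain $\sht(A/fA)=\infty$, i.e.\ $A/f$ is not quasi-$F$-split. The only point needing care is the monotonicity of $I^{\ns}_n(f)$, which rests on the observation that the image of $\theta_f$ lies in $f^{p-2}A$; everything else is formal.
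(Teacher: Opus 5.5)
Your proof is correct and follows the paper's argument essentially step for step. The paper also shows $I_r(f)\subseteq I^{\ns}_r(f)$, deduces from the fact that the image of $\theta_f$ lies in $f^{p-2}A$ that $\{I^{\ns}_r(f)\}$ is decreasing, and combines this with the increasing sequence $\{I_n(f)\}$ and the criterion of \cite{kty}*{Theorem~A}. The only differences are cosmetic: you spell out the induction for $I_n(f)\subseteq I^{\ns}_n(f)$, which the paper states without proof, and you split into the cases $n\ge m$ and $n<m$, whereas the paper passes through the index $\max(n,m)$.
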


\begin{proof}
We use the notation introduced in Convention~\ref{conv:local}.
By \cite{kty}*{Remark~4.12}, the sequence $\{I_n(f)\}_{n \geq 1}$ is increasing:
\[
I_1(f) \subseteq I_2(f) \subseteq \cdots \subseteq I_n(f) \subseteq \cdots.
\]

For every $a \in \Ker(u)$,
\[
\theta_f(F_*a)
= f^{p-2}u(F_*(\Delta(f)a))
\in f^{p-2}A.
\]
Therefore,
\[
I_2^{\ns}(f) \subseteq I_1^{\ns}(f).
\]
Thus, the sequence $\{I_r^{\ns}(f)\}_{r \geq 1}$ is decreasing:
\[
I_1^{\ns}(f) \supseteq I_2^{\ns}(f) \supseteq \cdots \supseteq I_r^{\ns}(f) \supseteq \cdots.
\]

Since we have
\[
I_r(f) \subseteq I_r^{\ns}(f)
\]
for every $r \geq 1$, 
we obtain 
\[
I_n (f) \subset I_{\max (n,m)} (f) \subset I^{\ns}_{\max(n,m)} (f) \subset I_{m}^{\ns} (f)
\]
for any $n, m \geq 1$.

Now set $N:=\ns(f)<\infty$.
By definition, we have $I_N^{\ns}(f) \subseteq \m^{[p]}.$
Hence, for every $n \geq 1$, we have
\[
I_n(f) \subseteq I_N^{\ns}(f) \subseteq \m^{[p]}.
\]
Therefore, $A/f$ is not quasi-$F$-split by \cite{kty}*{Theorem~A}.
\end{proof}

\subsection{Moduli-theoretic interpretation of non-splitting index}
In this subsection, we compare the non-splitting index with the Jacobian rank of polynomials associated with the height stratification of the moduli of Calabi--Yau hypersurfaces.

\begin{notation}\label{notation:AI-vs-NS}
Let $k$ be an algebraically closed field of characteristic $p>0$
and $A:=k[x_0,\ldots,x_N]$ a polynomial ring with the graded structure defined by $\deg(x_i)=q_i$ for $0 \leq i \leq N$.
Set $d:=q_0+\cdots+q_N$.
We fix the following notation.

\begin{itemize}
\item
Set
\[
\wt{A}:=W(k)[x_0,\ldots ,x_N],
\]
and put $x:=x_0\cdots x_N$.

\item
Define
\[
\Lambda
:=\{\alpha=(a_0,\ldots,a_N)\in \Z_{\ge 0}^{N+1} \mid q_0a_0+\cdots+q_Na_N=d\}.
\]
Fix an ordering on $\Lambda$ and write
\[
\Lambda=\{\alpha_1,\alpha_2,\ldots,\alpha_m\},
\qquad m:=|\Lambda|.
\]
For $\alpha_i=(a_{i,0},\ldots,a_{i,N})\in\Lambda$, we set
\[
x^{\alpha_i}:=x_0^{a_{i,0}}\cdots x_N^{a_{i,N}}.
\]

\item
Let
\[
s=(s_1,\ldots,s_m), \qquad t=(t_1,\ldots,t_m)
\]
be tuples of indeterminates.
\item
Define
\[
G(s):=\sum_{i=1}^m s_i x^{\alpha_i}
\in A[s_1,\ldots,s_m],
\]
and
\[
\wt{G}(s,t)
:=\sum_{i=1}^m (s_i+pt_i)x^{\alpha_i}
\in \wt{A}[s_1,\ldots,s_m,t_1,\ldots,t_m].
\]

\item
Let
\[
\phi\colon \wt{A}[s_1,\ldots,s_m,t_1,\ldots,t_m]
\rightarrow
\wt{A}[s_1,\ldots,s_m,t_1,\ldots,t_m]
\]
be the ring homomorphism defined by
\[
\phi|_{W(k)}=F,\qquad
\phi(x_j)=x_j^p \ (0\le j\le N),\qquad
\phi(s_i)=s_i^p,\ \phi(t_i)=t_i^p.
\]
We define $\Delta(G)(s,t)\in A[s_1,\ldots,s_m,t_1,\ldots,t_m]$
as the image of
\[
\frac{1}{p}\bigl(\wt{G}(s,t)^p-\phi(\wt{G}(s,t))\bigr)  \in \widetilde{A}[s_1,\ldots,s_m,t_1,\ldots,t_m]
\]
under the natural reduction map
\[
\wt{A}[s_1,\ldots,s_m,t_1,\ldots,t_m]
\rightarrow
A[s_1,\ldots,s_m,t_1,\ldots,t_m].
\]
Note that
\[
\Delta(G)(s,t)=\Delta(G)(s,0)-G(t)^p.
\]

\item For $b,c \in k^m$, we set
\[
G(b)=\sum_{i=1}^m b_ix^{\alpha_i} \in A,
\]
which is homogeneous of degree $d$.
We note that $\Delta (G) (b,c) \in A$ agrees modulo $F(A)$ with $\Delta (G(b))$ in Subsection \ref{subsection:Fedder-type-method}.
Furthermore, we set
\[
\wt{G}(b,c):=\sum_{i=1}^m ([b_i]+p[c_i])x^{\alpha_i} \in \wt{A},
\]
which is a lift of $G(b)$.
Then the image of 
\[
\Delta(\wt{G}(b,c)) := 
\frac{1}{p}\bigl(\wt{G}(b,c)^p-\phi|_{\wt{A}}(\wt{G}(b,c))\bigr)  \in \widetilde{A}
\]
in $A$ is $\Delta(G)(b,c) \in A$.

\item
Set
\[
G_1(s,t):=G(s)^{p-2},
\]
and for $n\ge 2$, define
\[
G_n(s,t):=G(s)^{p^n-p^{n-1}-\cdots-p-2}\,
\Delta(G)(s,t)^{p^{\,n-2}+\cdots+p+1}.
\]

\item 
For $n \geq 1$, let
\[
H_n(s,t)\in A[s_1,\ldots,s_m,t_1,\ldots,t_m]
\]
be the coefficient of $x^{p^n-1}$ in $G(s)G_n(s,t)$, and let
\[
M_n(s,t)\in A[s_1,\ldots,s_m,t_1,\ldots,t_m]
\]
be the coefficient of $x^{p^n-1}$ in $G(t)G_n(s,0)$.
We note that
\[
G(s)G_n(s,t) \equiv H_n(s,t)x^{p^n-1}, \quad G(t)G_n(s,0) \equiv M_n(s,t) x^{p^n-1} \pmod{(x_0^{p^n},\ldots,x_N^{p^n})}.
\]

\item
For $n\geq 1$ and $1\le i\le m$, set
\[
L_{n,i}(s,t):=\frac{\partial H_n(s,t)}{\partial s_i}.
\]

\item
Finally, for $n\geq 1$, we define the $n\times m$ matrix
\[
\mathcal{L}_n(s,t):=
\begin{pmatrix}
L_{1,1}(s,t) & L_{1,2}(s,t) & \cdots & L_{1,m}(s,t) \\
L_{2,1}(s,t) & L_{2,2}(s,t) & \cdots & L_{2,m}(s,t) \\
\vdots       & \vdots       &        & \vdots       \\
L_{n,1}(s,t) & L_{n,2}(s,t) & \cdots & L_{n,m}(s,t)
\end{pmatrix}.
\]
We note that $\mathcal{L}_n(s,0)$ is the Jacobian matrix of
$H_1(s,0),\ldots, H_n(s,0)$ with respect to $(s_1,\ldots,s_m)$.
\item For $n\geq 1$,
we set 
\[
Z_n:=V(H_1(s,0)) \cap \cdots \cap V(H_n(s,0)) \cap V(I_{n}(\cL_n(s,0))) \subseteq \Spec{A[s_1,\ldots,s_m]},
\]
where $I_n(\cL_n(s,0))$ is the ideal generated by the $n \times n$ minors of $\cL_n(s,0)$.
Furthermore, for $b \in k^m$, we set
\[
\tau(b)=\min\{n \in \Z_{\geq 1} \mid b \in Z_n\}
\]
if the set is nonempty, and $\tau(b)=\infty$ otherwise.
\item For $n\geq 1$, the coefficient of
\[
x^{p^n-1-\alpha_i}:=x_0^{p^n-1-a_{i,0}}\cdots x_N^{p^n-1-a_{i,N}}
\]
in $G_n(s,t)$ is denoted by $G_{n,i}(s,t)$.
We then define the $n\times m$ matrix
\[
\mathcal{G}_n(s,t):=
\begin{pmatrix}
G_{1,1}(s,t) & G_{1,2}(s,t) & \cdots & G_{1,m}(s,t) \\
G_{2,1}(s,t) & G_{2,2}(s,t) & \cdots & G_{2,m}(s,t) \\
\vdots       & \vdots       &        & \vdots       \\
G_{n,1}(s,t) & G_{n,2}(s,t) & \cdots & G_{n,m}(s,t)
\end{pmatrix}.
\]
Note that the coefficient of $(x_0 \cdots x_N)^{p^n-1}$ in $G_n(s,t) \cdot G(c)$ is the $n$-th row of
\[
\mathcal{G}_n(s,t) 
\begin{pmatrix}
c_1 \\
c_2 \\
\vdots \\
c_{m}
\end{pmatrix}
\]
for any $c = (c_1, \ldots, c_m) \in k^m$.
\end{itemize}
\end{notation}

Note that the variables \(t_1,\ldots,t_m\), which correspond to the first-order coefficients, are not particularly essential in this section, but are essential for discussing the non-splitting index in mixed characteristic (cf.\ Section \ref{section:mixed}).

\begin{theorem}\label{rank-chara-ns}
Let $b=(b_1,\ldots,b_m) \in k^m$.
\begin{enumerate}
\item 
We
have
\[
\ns(G(b))\geq \min\{n \geq 1 \mid \rank(\mathcal{G}_n(b,0)) \leq n-1 \}.
\]
\item 
Let $n$ be a positive integer.
If $\sht(A/G(b)) \geq n$ and $\ns(G(b)) \geq n+1$, then $\rank(\cG_n(b,0))=n$.
\end{enumerate}
In particular, if $A/G(b)$ is not quasi-$F$-split, then
\[
\ns(G(b))= \min\{n \geq 1 \mid \rank(\mathcal{G}_n(b,0)) \leq n-1 \}
\]
\end{theorem}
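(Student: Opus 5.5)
The plan is to make the ideal-theoretic criterion of \Cref{Fedder-ns} concrete for the homogeneous element $f:=G(b)$, using the same degree bookkeeping as in the proof of \Cref{variant-fedder}. The basic observation is a degree count. Every element produced by the recursion defining $I^{\ns}_n(f)$ is built from $f^{p-2}A$, $f^{p-1}A$ and $\theta_f$, and $\theta_f$ sends homogeneous elements of degree $(p-1)d$ to homogeneous elements of degree $(p-1)d$. Homogeneous components of degree different from $(p-1)d$ cannot contribute to $I^{\ns}_n(f)\not\subseteq \m^{[p]}$: after applying $u$ they land in positive degree, or they are killed. So it suffices to track the degree-$(p-1)d$ parts, and these are exactly $f^{p-2}G(c)$ for $c\in k^m$, together with $f^{p-1}\cdot k$.

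For a homogeneous element $g$ of degree $(p-1)d$ I will use \cite{kty}*{Proposition~3.9}. It says that $g\in\m^{[p]}$ if and only if $u(F_*g)=0$, if and only if the coefficient of $x^{p-1}$ in $g$ vanishes; in this degree, $\Ker(u)$ and $\m^{[p]}$ cut out the same elements. Unwinding $\theta_f(F_*h)=f^{p-2}u(F_*(\Delta(f)h))$ repeatedly, exactly as in the proof of \Cref{variant-fedder}, I expect
\[
u\bigl(F_*\theta_f^{\,j-1}(F^{j-1}_*(f^{p-2}G(c)))\bigr)
= u^{j}\bigl(F^{j}_*(G_j(b,0)G(c))\bigr),
\]
and this equals the $j$-th row of $\mathcal G_j(b,0)c$, i.e.\ the coefficient of $x^{p^j-1}$ in $G_j(b,0)G(c)$. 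Here one has to check that the exponent of $f$ accumulated along the way is precisely $p^j-p^{j-1}-\cdots-p-2$, which is how $G_j$ was defined. Note also that the first $j$ rows of $\mathcal G_n$ form $\mathcal G_j$. Consequently, $\theta_f^{\,n-1}(F^{n-1}_*(f^{p-2}G(c)))$ is a legitimate element of $I^{\ns}_n(f)$ (all intermediate elements lie in $\Ker(u)$) if and only if rows $1,\dots,n-1$ of $\mathcal G_n(b,0)c$ vanish. Under that condition, it lies outside $\m^{[p]}$ if and only if row $n$ is nonzero.

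\emph{Part (1).} Let $n_0$ be the minimum on the right-hand side and take $n<n_0$. Then $\rank\mathcal G_n(b,0)=n$, so the $n$-th row is not in the span of the previous rows. Hence there is $c$ with rows $1,\dots,n-1$ of $\mathcal G_n(b,0)c$ zero and row $n$ nonzero. By the computation above this gives an element of $I^{\ns}_n(f)$ outside $\m^{[p]}$, so by \Cref{Fedder-ns} we get $\ns(f)>n$ for all $n<n_0$. This proves (1). Only the $f^{p-2}$-generators are used here, so the $f^{p-1}A$ summands play no role.

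\emph{Part (2).} Suppose $\rank\mathcal G_n(b,0)<n$, and let $j\le n$ be minimal such that row $j$ lies in the span of rows $1,\dots,j-1$. Since $\ns(f)\ge n+1>j$, \Cref{Fedder-ns} yields an element of $I^{\ns}_j(f)$ of degree $(p-1)d$ outside $\m^{[p]}$. Its general form is $\theta_f^{\,j-1}(f^{p-2}G(c))$ plus sums of terms $\theta_f^{\,i}(F^i_*(f^{p-1}a))$ with $i\le j-2$. This is the main obstacle: controlling these $f^{p-1}$-contributions and the kernel conditions imposed on the whole sum rather than on each summand. I would use $\sht(A/fA)\ge n$, i.e.\ $A/fA$ is not $(n-1)$-quasi-$F$-split. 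The first assertion of \Cref{variant-fedder} then gives $\theta_f^{\,i}(F^i_*(f^{p-1}a))\in\Ker(u)\cap\m^{[p]}$ for $i\le n-2$. By linearity of the recursion, these terms therefore affect neither the kernel conditions nor membership in $\m^{[p]}$ at any later stage. It follows that rows $1,\dots,j-1$ of $\mathcal G_j(b,0)c$ vanish, hence row $j$ vanishes too, and the element lies in $\m^{[p]}$, a contradiction. So $\rank\mathcal G_n(b,0)=n$.

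\emph{Final assertion.} If $A/G(b)$ is not quasi-$F$-split then $\sht=\infty$. Applying (2) with $n=\ns(G(b))-1$ (when $\ns(G(b))$ is finite) shows that $\rank\mathcal G_n(b,0)=n$ for all $n<\ns(G(b))$. This gives the reverse inequality to (1), and the case $\ns(G(b))=\infty$ is handled the same way.
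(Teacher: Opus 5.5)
Your proposal is correct and is essentially the paper's proof. Part (1) is the same construction: choose $c$ with $\mathcal{G}_n(b,0)c=(0,\ldots,0,\ast)^{T}$ and feed it through \Cref{Fedder-ns}. Your part (2) takes a row of $\mathcal{G}_n(b,0)$ lying in the span of the earlier rows and uses \Cref{variant-fedder} with $\sht(A/G(b))\ge n$ to discard the $f^{p-1}$-contributions; this is just the paper's induction on $n$, phrased as a contradiction. One point should be stated explicitly: if $\ns(G(b))=\infty$, then (2) with $n=m+1$ would give $\rank\mathcal{G}_{m+1}(b,0)=m+1$ for an $(m+1)\times m$ matrix, so this case cannot occur when $A/G(b)$ is not quasi-$F$-split.
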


\begin{proof}
We show (1). Let $n$ be the right-hand side of the desired inequality, which is finite since $\mathcal{G}_n (b,0)$ is an $n \times m$ matrix.
Since the case where $n=1$ is trivial, we may assume that $n\geq 2$.
We have $\rank(\cG_i(b,0))=i$ for $1 \leq i \leq n-1$ in this case.
Then there exists $c \in k^m$ such that
\begin{equation}\label{eq:G_i-rank}
\begin{pmatrix}
0 \\
0 \\
\vdots \\
0 \\
1
\end{pmatrix}
=
\mathcal{G}_{i}(b,0)
\begin{pmatrix}
c_1 \\
c_2 \\
\vdots \\
c_{m}
\end{pmatrix}. 
\end{equation}
We set $\alpha_1:=G(c)G(b)^{p-2}$.
Then $\alpha_1$ is homogeneous of degree $d(p-1)$.
If $n\ge 2$, then $\alpha_1\in \m^{[p]}$, and hence $\alpha_1\in \Ker(u)$.
We set 
\[
\alpha_2:=\theta_{G(b)}(F_*\alpha_1)=u(F_*(G_2(b,0)G(c))).
\]
Here, $\theta_{G(b)}$ is as in Convention~\ref{conv:local}.
Then the degree of $\alpha_2$ is $d(p-1)$ and $\alpha_2 \in \m^{[p]}$ by \eqref{eq:G_i-rank} if $3 \leq {i}$.
Thus, we have $\alpha_2 \in \Ker(u)$.
Therefore, if we put $\alpha_{j+1}=\theta_{G(b)}(F_*\alpha_j)$, then we have $\alpha_1,\ldots,\alpha_{i-1} \in \Ker(u)$ and $\alpha_i \notin \m^{[p]}$.
Thus we have $I^{\ns}_i(G(b)) \not\subseteq \m^{[p]}$ {for $1 \leq i \leq n-1$}.
{By \cref{Fedder-ns}, this implies $\ns(G(b)) \geq n$.}
{This completes the proof of (1).}

Next, we show (2) by induction on $n$.
For $n=1$, we have
\[
I^{\ns}_1(G(b))=G(b)^{p-2}A \not\subseteq \m^{[p]}
\]
by \cref{Fedder-ns}.
Thus, there exists a homogeneous element $\alpha \in A$ of degree $d$ such that $u(F_*G(b)^{p-2}\alpha) \neq 0$.
Taking  $c= (c_1, \ldots, c_m)\in k^m$ with $\alpha=G(c)$, we have
\[
\mathcal{G}_{1}(b,0) \begin{pmatrix}
c_1 \\
c_2 \\
\vdots \\
c_{m}
\end{pmatrix} \neq 0.
\]
Thus, we have $\rank(\mathcal{G}_1(b,0))=1$.

Next, we assume $n \geq 2$.
We have 
\[
I^{\ns}_1(G(b)),\ldots,I^{\ns}_n(G(b)) \not\subseteq \m^{[p]}
\]
by \cref{Fedder-ns}.
Then there exist $\alpha_1,\ldots,\alpha_{n-1} \in \Ker(u)$, $\alpha_n \in A$, and $\beta_1,\ldots,\beta_{n-1} \in A$ such that $\alpha_1 \in G(b)^{p-2}A$ is homogeneous of degree $d(p-1)$, $\beta_1,\ldots,\beta_{n-1}$ are homogeneous of degree $0$,
\[
\alpha_{i+1}=\theta_{G(b)}(F_*\alpha_i)+G(b)^{p-1}\beta_i
\]
for $1 \leq i \leq n-1$, and $\alpha_n \notin \m^{[p]}$.
In particular, there exists a homogeneous element $\alpha \in A$ of degree $d$ such that $\alpha_1=G(b)^{p-2}\alpha$.
Then we have
\[
\alpha_i=u^{i-1}(F^{i-1}_*(G_i(b)\alpha))+u^{i-2}(F^{i-2}_*(G_{i-1}(b,0)G(b)\beta_1))+\cdots+G_1(b,0)G(b)\beta_{i-1}
\]
for every $1 \leq i \leq n$.
Since $\beta_1,\ldots,\beta_{i-1}$ are homogeneous of degree $0$ and $A/G(b)$ is not $(n-1)$-quasi-$F$-split, we have
\[
u^{i-2}(F^{i-2}_*(G_{i-1}(b,0)G(b)\beta_1))+\cdots+G_1(b,0)G(b)\beta_{i-1} \in \Ker(u) \cap \m^{[p]}
\]
by \Cref{variant-fedder}.
Therefore, we may assume $\beta_1=\cdots=\beta_{n-1}=0$.
In particular, we have
\[
\alpha_i=u^{i-1}(F^{i-1}_*(G_i(b,0)\alpha)) \in \Ker(u)
\]
for $1 \leq i \leq n-1$.
Since $\alpha_i$ is homogeneous of degree $d(p-1)$, the coefficient of $x^{p^i-1}$ in $G_i(b,0)\alpha$ is zero.
On the other hand, we have
\[
\alpha_n=u^{n-1}(F^{n-1}_*(G_n(b,0)\alpha)) \notin \m^{[p]}. 
\]
In particular, the coefficient of $x^{p^n-1}$ in $G_n(b,0)\alpha$ is nonzero; denote it by $\lambda$.
Then $\lambda\neq 0$.
Taking  $c= (c_1, \ldots, c_m)\in k^m$ with $\alpha=G(c)$, we have
\[
\begin{pmatrix}
0 \\
0 \\
\vdots \\
0 \\
\lambda
\end{pmatrix}
=
\mathcal{G}_{n}(b,0)
\begin{pmatrix}
c_1 \\
c_2 \\
\vdots \\
c_{m}
\end{pmatrix}.
\]
Therefore, we obtain 
\[
\rank(\mathcal{G}_n(b,0)) \geq \rank(\mathcal{G}_{n-1}(b,0))+1.
\]
Hence $\rank(\cG_n(b,0))=n$.
\end{proof}

\begin{corollary}\label{ns-vs-qfs}
Let $b=(b_1,\ldots,b_m) \in k^m$.
Then $A/(G(b))$ is not quasi-$F$-split if and only if $\ns(G(b))<\infty$.
\end{corollary}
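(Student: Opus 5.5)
One direction is already available: if $\ns(G(b))<\infty$, then $A/(G(b))$ is not quasi-$F$-split by \Cref{finite-ns-to-non-qfs}, applied after localizing at $\m=(x_0,\ldots,x_N)$. This transfer is legitimate because $G(b)$ is homogeneous, so quasi-$F$-splitting of the graded ring $A/(G(b))$ and of its localization at the irrelevant ideal agree. The same graded-versus-local compatibility is already used implicitly in \Cref{rank-chara-ns}, where $\ns(G(b))$ for a polynomial is computed through the local Fedder criterion \Cref{Fedder-ns}.

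For the converse, I will assume that $A/(G(b))$ is not quasi-$F$-split, so $\sht(A/G(b))=\infty$, and show that $\ns(G(b))<\infty$. I will argue by contradiction. Suppose $\ns(G(b))=\infty$. Then for every $n\ge 1$ we have both $\sht(A/G(b))\ge n$ and $\ns(G(b))\ge n+1$. \Cref{rank-chara-ns}(2) therefore gives
\[
\rank(\cG_n(b,0))=n \quad\text{for every } n\ge 1.
\]

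The contradiction comes from a dimension count. $\cG_n(b,0)$ is an $n\times m$ matrix, where $m=|\Lambda|$ is the fixed finite number of monomials of degree $d$. Its rank is at most $m$, so taking $n=m+1$ gives
\[
m+1=\rank(\cG_{m+1}(b,0))\le m,
\]
which is absurd. Hence $\ns(G(b))\le m+1<\infty$.

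Alternatively, the last display of \Cref{rank-chara-ns} gives the bound directly. In the non-quasi-$F$-split case, $\ns(G(b))$ equals $\min\{n\mid \rank(\cG_n(b,0))\le n-1\}$, and this minimum is at most $m+1$ for the same rank reason.

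I expect no real obstacle here. The only point needing care is the local/graded identification in the first direction, which is routine for homogeneous elements and standard graded rings.
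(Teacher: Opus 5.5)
Your proposal is correct and matches the paper's proof. The ``if'' direction is \Cref{finite-ns-to-non-qfs}, and the ``only if'' direction combines \Cref{rank-chara-ns}(2) with the observation that an $n\times m$ matrix cannot have rank $n$ once $n=m+1$, giving $\ns(G(b))\le m+1$; your remark about identifying the graded ring with its localization at $\m$ makes explicit a step the paper leaves implicit.
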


\begin{proof}
The ``if'' direction follows from \cref{finite-ns-to-non-qfs}.
We prove the ``only if'' direction.
By \cref{rank-chara-ns} (2), it suffices to show that
\[
\min\{n \geq 1 \mid \rank(\mathcal{G}_n(b,0)) \leq n-1 \}<\infty.
\]
However,
\[
\min\{n \geq 1 \mid \rank(\mathcal{G}_n(b,0)) \leq n-1 \} \leq m+1.
\]
since $\mathcal{G}_{n} (b,0)$ is an $n \times m$ matrix.
This proves the claim.
\end{proof}

\begin{lemma}\label{derivation-delta}
We have
\[
\frac{\partial \Delta(G)}{\partial s_i}(s,t)
=
G(s)^{p-1}x^{\alpha_i}-s_i^{p-1}x^{p\alpha_i}.
\]
\end{lemma}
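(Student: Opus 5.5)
The plan is to differentiate the defining expression of $\Delta(G)$ before reducing modulo $p$, and then reduce. Concretely, I work in $\wt{A}[s,t]$ with
\[
\wt{G}(s,t)=\sum_j (s_j+pt_j)x^{\alpha_j},\qquad \phi(\wt{G}(s,t))=\sum_j (s_j^p+pt_j^p)x^{p\alpha_j},
\]
where I use that $\phi$ fixes $p$ and acts on $W(k)$-coefficients, and the coefficients here are just $1$ and $p$. The partial derivative $\partial/\partial s_i$ is a $W(k)$-linear derivation of $\wt{A}[s,t]$. It commutes with reduction modulo $p$, since it preserves $p\wt{A}[s,t]$. It also commutes with division by $p$, because $\wt{A}[s,t]$ is $p$-torsion free.

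Since $\Delta(G)(s,t)$ is the reduction of $\frac1p(\wt{G}^p-\phi(\wt{G}))$, I would compute
\[
\frac{\partial}{\partial s_i}\,\frac1p\bigl(\wt{G}^p-\phi(\wt{G})\bigr)=\frac1p\Bigl(p\,\wt{G}^{p-1}x^{\alpha_i}-p\,s_i^{p-1}x^{p\alpha_i}\Bigr)=\wt{G}(s,t)^{p-1}x^{\alpha_i}-s_i^{p-1}x^{p\alpha_i}.
\]
This uses $\partial\wt{G}/\partial s_i=x^{\alpha_i}$ and $\partial\phi(\wt{G})/\partial s_i=p s_i^{p-1}x^{p\alpha_i}$.

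Reducing modulo $p$, $\wt{G}(s,t)$ maps to $G(s)$, because the $pt_j$ terms vanish. This gives the claimed formula $G(s)^{p-1}x^{\alpha_i}-s_i^{p-1}x^{p\alpha_i}$.

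The only point needing care is the justification that one may differentiate before reducing. I would phrase it as follows: if $P\in\wt{A}[s,t]$ and $pQ=P$, then $p\,\partial Q=\partial P$. Hence $\partial Q=\frac1p\partial P$, and the reduction of $\partial Q$ is the derivative of the reduction of $Q$.

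As a sanity check, one can alternatively use the stated identity $\Delta(G)(s,t)=\Delta(G)(s,0)-G(t)^p$. The term $G(t)^p$ does not depend on $s$, so it suffices to treat $t=0$, which is consistent with the $t$-independence of the answer.
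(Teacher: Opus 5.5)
Your proposal is correct and follows the paper's proof: differentiate $\frac{1}{p}\bigl(\wt{G}(s,t)^p-\phi(\wt{G}(s,t))\bigr)$ in $\wt{A}[s,t]$ to get $\wt{G}(s,t)^{p-1}x^{\alpha_i}-s_i^{p-1}x^{p\alpha_i}$, then reduce modulo $p$. Your extra justifications ($p$-torsion-freeness, so that differentiation commutes with division by $p$, and the compatibility of $\partial/\partial s_i$ with reduction) simply make explicit what the paper leaves implicit.
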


\begin{proof}
By definition, we have
\begin{align*}
\frac{\partial}{\partial s_i}
\Bigl(
\frac{1}{p}\bigl(\wt{G}(s,t)^p-\phi(\wt{G}(s,t))\bigr)
\Bigr)
&=
\frac{1}{p}
\frac{\partial}{\partial s_i}
\bigl(\wt{G}(s,t)^p-\phi(\wt{G}(s,t))\bigr) \\
&=
\wt{G}(s,t)^{p-1}x^{\alpha_i}
-
s_i^{p-1}x^{p\alpha_i}.
\end{align*}
Taking the image under the natural reduction map
\[
\wt{A}[s_1,\ldots,s_m,t_1,\ldots,t_m]
\rightarrow
A[s_1,\ldots,s_m,t_1,\ldots,t_m],
\]
we obtain the desired formula.
\end{proof}

\begin{proposition}\label{relation-M-L}
For every $1\le i\le m$ and $n \geq 1$, we have
\[
G_{n,i}(s,t) \equiv -L_{n,i}(s,t) \pmod{(H_1(s,t),H_{n-1}(s,t))},
\]
where $H_0(s,t)=0$.
\end{proposition}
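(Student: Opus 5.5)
The plan is a direct computation: differentiate $G(s)G_n(s,t)$ with respect to $s_i$ using \Cref{derivation-delta}, reduce the exponents modulo $p$, and show that every term other than $-x^{\alpha_i}G_n(s,t)$ contributes to the coefficient of $x^{p^n-1}$ only through $H_1(s,t)$ or $H_{n-1}(s,t)$. I read the ideal in the statement as $(H_1(s,t),H_{n-1}(s,t))$; the argument below in fact places the error terms in this ideal.

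\emph{Case $n=1$.} Here $G(s)G_1(s,t)=G(s)^{p-1}$. Its $s_i$-derivative is $(p-1)G(s)^{p-2}x^{\alpha_i}=-x^{\alpha_i}G_1(s,t)$. Taking the coefficient of $x^{p-1}$ gives $L_{1,i}=-G_{1,i}$ on the nose.

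\emph{Case $n\ge 2$: the derivative.} Write $e_n:=p^n-p^{n-1}-\cdots-p-2$ and $r_n:=1+p+\cdots+p^{n-2}$, so that $G(s)G_n=G(s)^{e_n+1}\Delta(G)^{r_n}$. By \Cref{derivation-delta}, $\partial_{s_i}\Delta(G)=G(s)^{p-1}x^{\alpha_i}-s_i^{p-1}x^{p\alpha_i}$. Since $e_n+1\equiv -1$ and $r_n\equiv 1 \pmod p$, we get
\[
\partial_{s_i}\bigl(G(s)G_n\bigr)=-x^{\alpha_i}G_n+G(s)^{e_n+p}\Delta(G)^{r_n-1}x^{\alpha_i}-s_i^{p-1}G(s)^{e_n+1}\Delta(G)^{r_n-1}x^{p\alpha_i}.
\]
The coefficient of $x^{p^n-1}$ in the first term is $-G_{n,i}$. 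It therefore remains to show that the coefficients of $x^{p^n-1}$ in the other two terms lie in $(H_{n-1})$ and $(H_1)$, respectively.

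\emph{Rewriting the two error terms as $p$-th powers.} We have $r_n-1=p\,r_{n-1}$, with $r_1:=0$. We also have $e_n+1=p\,e_{n-1}+p-1$ and $e_n+p=p(e_{n-1}+1)+p-2$; the case $n=2$ is consistent if we interpret $e_1:=p-2$, matching $G_1=G^{p-2}$. Hence
\[
G^{e_n+p}\Delta^{r_n-1}x^{\alpha_i}=\bigl(G\,G_{n-1}(s,t)\bigr)^p\cdot G^{p-2}x^{\alpha_i},
\]
\[
G^{e_n+1}\Delta^{r_n-1}x^{p\alpha_i}=G_{n-1}(s,t)^p\cdot G^{p-1}x^{p\alpha_i}.
\]

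\emph{The Frobenius coefficient extraction.} The key step is the following observation. For a polynomial $a$ and a homogeneous $b$ of degree $d(p-1)$, the coefficient of $x^{p^n-1}$ in $a^p b$ equals $\sum_\gamma a_\gamma^{\,p}\, b_{p^n-1-p\gamma}$, where $a_\gamma$ and $b_\delta$ denote coefficients. A monomial of $b$ appearing here must have every exponent $\equiv p-1\pmod p$, hence every exponent $\ge p-1$. Its degree is $d(p-1)=\deg x^{p-1}$, so it must equal $x^{p-1}$ exactly, and then $\gamma$ is forced to be $p^{n-1}-1$ in each variable.

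\emph{Applying it to the two terms.} For the first term, take $a=G\,G_{n-1}$ and $b=G^{p-2}x^{\alpha_i}$. The observation shows the coefficient is $H_{n-1}(s,t)^p$ times the coefficient of $x^{p-1}$ in $G^{p-2}x^{\alpha_i}$, which lies in $(H_{n-1})$. For the second term, take $a=G_{n-1}$, and note that $b=G^{p-1}x^{p\alpha_i}$ has monomials of the form $x^{\delta+p\alpha_i}$. The same congruence-and-degree argument, applied to $\delta$ with $\deg\delta=d(p-1)$, forces $\delta=p-1$ in each variable. Thus the coefficient is a multiple of the coefficient of $x^{p-1}$ in $G(s)^{p-1}$, which is $H_1(s,t)$. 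Combining the three terms yields $L_{n,i}\equiv -G_{n,i}$ modulo $(H_1,H_{n-1})$.

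The main obstacle is the exponent bookkeeping: verifying the identities $e_n+1=p\,e_{n-1}+p-1$ and $r_n-1=p\,r_{n-1}$, including the boundary case $n=2$, and checking the degree count $\deg(G^{p-2}x^{\alpha_i})=d(p-1)$ so that the monomial $x^{p-1}$ is forced in both extractions.
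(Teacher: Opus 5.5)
Your proof is correct and is essentially the paper's argument. The paper writes $G(s)G_n(s,t)=G_{n-1}(s,t)^pG(s)^{p-1}\Delta(G)(s,t)$, differentiates to get $-x^{\alpha_i}G_n$ plus the same two error terms $(G\,G_{n-1})^pG^{p-2}x^{\alpha_i}$ and $s_i^{p-1}(G_{n-1}x^{\alpha_i})^pG^{p-1}$ that you obtain by exponent bookkeeping, and places their $x^{p^n-1}$-coefficients in $(H_{n-1}(s,t))$ and $(H_1(s,t))$. Your congruence-and-degree extraction is a more precise version of the paper's congruence modulo $(x_0^{p^n},\ldots,x_N^{p^n})$, which is really only meant at the level of the $x^{p^n-1}$-coefficient.
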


\begin{proof}
Fix $1\le i\le m$ and $n \geq 1$.
We note that the coefficient of $x^{p^n-1}$ in
\[
\frac{\partial}{\partial s_i}\bigl(G_n(s,t)G(s)\bigr)
\]
is equal to $L_{n,i}(s,t)$ by definition.

Since $G_1(s,t)G(s)=G(s)^{p-1}$ and
$\frac{\partial G(s)}{\partial s_i}=x^{\alpha_i}$,
the assertion is clear for $n=1$.

Now assume $n\ge 2$.
We have
\begin{align}\label{eq:partial}
\frac{\partial}{\partial s_i}\bigl(G_n(s,t)G(s)\bigr)
&=
\frac{\partial}{\partial s_i}
\bigl(G_{n-1}(s,t)^p G(s)^{p-1}\Delta(G)(s,t)\bigr) \notag \\
&=G_{n-1}(s,t)^p
\Bigl(
-(p-1)G(s)^{p-2}\frac{\partial G(s)}{\partial s_i}\Delta(G)(s,t)
+G(s)^{p-1}\frac{\partial\Delta(G)(s,t)}{\partial s_i}
\Bigr) \notag \\
&=-G_n(s,t)x^{\alpha_i}
+G_{n-1}(s,t)^pG(s)^{p-1}
\frac{\partial\Delta(G)}{\partial s_i}(s,t).
\end{align}

Moreover, it follows from \cref{derivation-delta} that
\begin{align*}
&G_{n-1}(s,t)^pG(s)^{p-1}
\frac{\partial\Delta(G)}{\partial s_i}(s,t) \\
& =
G_{n-1}(s,t)^pG(s)^{p-1}
\bigl(G(s)^{p-1}x^{\alpha_i}-s_i^{p-1}x^{p\alpha_i}\bigr) \\
& =
(G_{n-1}(s,t)G(s))^p G(s)^{p-2}x^{\alpha_i}
-(G_{n-1}(s,t)x^{\alpha_i})^p G(s)^{p-1}s_i^{p-1} \\
& \equiv
H_{n-1}(s,t)^p G(s)^{p-2}x^{p^n-p+\alpha_i}
-G_{n-1,i}(s,t)^p H_1(s,t)s_i^{p-1} x^{p^n-1}
\pmod{(x_0^{p^n},\ldots,x_N^{p^n})}.
\end{align*}
Therefore, the coefficient of $x^{p^n-1}$ in the second term of
\eqref{eq:partial} is contained in the ideal
$(H_1(s,t),H_{n-1}(s,t))$.

Consequently, we obtain
\[
\frac{\partial}{\partial s_i}\bigl(G_n(s,t)G(s)\bigr)
\equiv
-G_n(s,t)x^{\alpha_i}
\pmod{(x_0^{p^n},\ldots,x_N^{p^n},H_1(s,t),H_{n-1}(s,t))}.
\]
Note that the both-hand sides are homogeneous of degree $(p^n-1)d$ with respect to $x_0, \ldots, x_N$.
Comparing the coefficients of $x^{p^n-1}$, it follows that the coefficient
of $x^{p^n-1-\alpha_i}$ in $G_n(s,t)$ is congruent to $-L_{n,i}(s,t)$
modulo $(H_1(s,t),H_{n-1}(s,t))$.
\end{proof}

\begin{theorem}\label{tau-vs-ns}
For \(b\in k^m\), we have
\[
\tau(b)=\ns(G(b)).
\]
\end{theorem}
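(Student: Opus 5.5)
We prove $\tau(b)=\ns(G(b))$ by reducing both sides to the rank characterization of \Cref{rank-chara-ns}. Throughout, write $f:=G(b)$, and recall that $H_n(b,0)$ is the coefficient of $x^{p^n-1}$ in $fG_n(b,0)$.

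\textbf{Step 1: $H_n(b,0)$ detects quasi-$F$-splitting.} Since $G_n(b,0)f=f^{p-1}(f^{p(p-2)}\Delta(f))^{1+\cdots+p^{n-2}}$ is homogeneous of degree $(p^n-1)d$, it lies in $\m^{[p^n]}$ if and only if $H_n(b,0)=0$. Hence \Cref{variant-fedder} gives
\[
\sht(A/f)=\min\{n\ge1\mid H_n(b,0)\neq0\}.
\]
In particular, $b\in V(H_1(s,0))\cap\cdots\cap V(H_n(s,0))$ if and only if $\sht(A/f)>n$.

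\textbf{Step 2: replace $\mathcal{L}_n$ by $\mathcal{G}_n$.} Suppose $H_1(b,0)=\cdots=H_{n-1}(b,0)=0$. Then \Cref{relation-M-L}, evaluated at $(s,t)=(b,0)$ for each row $j\le n$, gives $\mathcal{G}_n(b,0)=-\mathcal{L}_n(b,0)$. Consequently, when $b\in V(H_1,\dots,H_n)$, the condition $b\in V(I_n(\mathcal{L}_n(s,0)))$ is equivalent to $\rank\mathcal{G}_n(b,0)\le n-1$. This yields
\[
b\in Z_n \iff \sht(A/f)>n \ \text{ and } \ \rank\mathcal{G}_n(b,0)\le n-1.
\]

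\textbf{Step 3: the case where $A/f$ is not quasi-$F$-split.} Here $\sht=\infty$, so Step 2 gives $\tau(b)=\min\{n\mid\rank\mathcal{G}_n(b,0)\le n-1\}$. The ``in particular'' part of \Cref{rank-chara-ns} identifies this minimum with $\ns(f)$.

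\textbf{Step 4: the case $h:=\sht(A/f)<\infty$.} By \Cref{ns-vs-qfs}, $\ns(f)=\infty$. We must show $b\notin Z_n$ for every $n$; by Step 2 it suffices to treat $n<h$ and show $\rank\mathcal{G}_n(b,0)=n$. Since $\sht\ge n$ and $\ns\ge n+1$ (indeed $\ns=\infty$), \Cref{rank-chara-ns}(2) gives exactly this. For $n\ge h$, $b\notin V(H_h)$, so $b\notin Z_n$. Thus $\tau(b)=\infty=\ns(f)$.

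The main point needing care is Step 2. The congruence in \Cref{relation-M-L} is stated modulo $(H_1,H_{n-1})$ rather than modulo all lower $H_j$, so it must be applied row by row: for row $j$ it needs the vanishing of $H_1$ and $H_{j-1}$, which is available because $j\le n$. The rank comparison also holds on the nose, since the two matrices differ only by the sign $-1$.
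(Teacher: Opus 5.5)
Your proof is correct and essentially the same as the paper's. It uses the same ingredients: the Fedder-type identification of $\bigcap_{i\le n}V(H_i(s,0))$ at $b$ with $\sht(A/G(b))>n$, the row-by-row equality $\mathcal{G}_n(b,0)=-\mathcal{L}_n(b,0)$ from \Cref{relation-M-L}, and \Cref{rank-chara-ns} together with \Cref{ns-vs-qfs}. The only difference is organizational: you split according to whether $A/G(b)$ is quasi-$F$-split and describe $Z_n$ pointwise, whereas the paper proves the two inequalities $\ns(G(b))\le\tau(b)$ and $\tau(b)\le\ns(G(b))$ directly.
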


\begin{proof}
We first show that \(\ns(G(b))\le \tau(b)\).
Assume that \(b\in Z_n\) for some integer \(n\ge 1\).
Then
\[
\rank(\cL_n(b,0))\le n-1
\]
by the definition of \(Z_n\).
Since \(b\in V(H_i)\) for all \(1\le i\le n\), Proposition~\ref{relation-M-L} yields
\[
\rank(\cG_n(b,0))\le n-1.
\]
Since $A/G(b)$ is not $n$-quasi-$F$-split by \cite{kty}*{Theorem~C}, we have
\[
\ns(G(b))\le n
\]
by \cref{rank-chara-ns}. Therefore
\[
\ns(G(b))\le \tau(b).
\]

Next, we prove the converse inequality.
We may assume $\ns(G(b))<\infty$.
Then we have
\[
n:=\min\{r\ge 1\mid \rank(\cG_r(b,0))\le r-1\} \leq \ns(G(b))
\]
by \cref{rank-chara-ns} and $A/G(b)$ is not quasi-$F$-split by \cref{ns-vs-qfs}.
In particular, we have
\begin{equation}\label{eq:b-in-intersection}
b\in \bigcap_{i\geq 1}V(H_i(s,0)).
\end{equation}
By Proposition~\ref{relation-M-L} and \eqref{eq:b-in-intersection}, we obtain
\[
\rank(\cL_{n}(b,0))\le n-1.
\]
This shows that \(b\in Z_{n}\).
Therefore
\[
\tau(b)\le n\le \ns(G(b)).
\]
This completes the proof.
\end{proof}

\subsection{Formula for the non-splitting indices}\label{section:explicit-formula}

Let \(k\) be an algebraically closed field of characteristic \(p>0\).
Let
\[
A:=k[x_0,\ldots,x_N]
\]
be a weighted polynomial ring with \(\deg(x_i)=q_i\in \Z_{>0}\), and set
\[
d:=q_0+\cdots+q_N
\]
as in Notation \ref{notation:AI-vs-NS}.
Let \(V\) be the \(k\)-vector space of homogeneous polynomials of degree \(d\), and let \(f\in V\).
Set $X:=\Proj(A/fA)$.
Let $M_1,\ldots,M_m$ be all degree $d$ monomials (with coefficients 1) in $A$. 

Let
\[
f=\sum_{i=1}^m b_i M_i\in V
\]
be the monomial decomposition of \(f\).
We define
\[
\Delta(f)
:=
\sum_{\substack{0\le \beta_1,\dots,\beta_m\le p-1\\
\beta_1+\cdots+\beta_m=p}}
\frac{1}{p}\binom{p}{\beta_1,\dots,\beta_m}
(b_1M_1)^{\beta_1}\cdots(b_mM_m)^{\beta_m}.
\]
Set 
\[
v_f:=
\begin{pmatrix}
    b_1 \\
    \vdots \\
    b_m
\end{pmatrix}.
\]

Consider the basis
\[
\{F_*(x_0^{i_0}\cdots x_N^{i_N}) \mid 0\le i_0,\ldots,i_N\le p-1\}
\]
of \(F_*A\).
Let
\[
u\colon F_*A\to A
\]
be the dual basis element corresponding to
\[
F_*\bigl((x_0\cdots x_N)^{p-1}\bigr).
\]

We define the $m \times m$ matrix $T$ by the representation matrix of
\[
F_*V\to V,\qquad
h\longmapsto u\!\left(F_*\bigl(\Delta(f)f^{p-2}h\bigr)\right)
\]
with respect to $F_*M_1,\ldots,F_*M_m$ and $M_1,\ldots,M_m$.
Furthermore, we define the vector $\lambda$ by
\[
\lambda:=
\begin{pmatrix}
    u\!\left(F_*\bigl(f^{p-2}M_1\bigr)\right) & \cdots & u\!\left(F_*\bigl(f^{p-2}M_m\bigr)\right) 
\end{pmatrix}
\]
Here \(\lambda\) is \(k\)-valued since \(u(F_*(f^{p-2}M_i))\) is homogeneous of degree \(0\) for every $i$.
Define row vectors \(R_n\in k^{1\times m}\) recursively by
\[
R_1:=F(\lambda),\qquad
R_{n+1}:=F(R_n)T\qquad (n\ge1).
\]

\begin{theorem}\label{explicit-formula}
In the above setting, the quasi-$F$-split height $\sht (X)$ satisfies
\[
\sht(X)=\inf\{\,n\ge 1\mid R_n v_f \neq 0\,\},
\]
where the infimum is defined to be \(\infty\) if the set on the right-hand side is empty.

Furthermore, if $R_nv_f=0$ for every $n \geq 1$, then
\[
\ns(f)
=
\inf\left\{\,n\ge 1\ \middle|\ 
\operatorname{rank}
\begin{pmatrix}
R_1\\
R_2\\
\vdots\\
R_n
\end{pmatrix}
\le n-1
\right\}.
\]
\end{theorem}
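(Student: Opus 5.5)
We translate the Fedder-type criteria of \Cref{variant-fedder} and \Cref{rank-chara-ns} into linear algebra on the finite-dimensional space $V$. The key observation is that the map $\theta_f$ preserves degree $d(p-1)$, so everything happens on polynomials of the form $f^{p-2}h$ with $h\in V$. Since $F_*A$ is free over $A$ with the monomial basis and $u$ is the dual basis element for $F_*(x^{p-1})$, a homogeneous element $a$ of degree $d(p-1)$ satisfies $u(F_*a)=$ (the coefficient of $x^{p-1}$ in $a$), viewed appropriately. Moreover, $a\in\m^{[p]}$ if and only if that coefficient vanishes. Hence membership in $\Ker(u)\cap\m^{[p]}$ reduces to the vanishing of a single scalar.

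\textbf{Step 1: a closed formula for the rows of $\cG_n(b,0)$.}
We aim to prove by induction on $n$ that, for $c\in k^m$,
\[
u^{n-1}\bigl(F^{n-1}_*(G_n(b,0)G(c))\bigr)=R_n\cdot c,
\]
up to the Frobenius twists built into the definition of $R_n$. Equivalently, the $n$-th row of $\cG_n(b,0)$ equals $R_n$, possibly after twisting by a power of Frobenius, which does not affect ranks or vanishing.

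For $n=1$, the coefficient of $x^{p-1}$ in $f^{p-2}G(c)$ is $\lambda c$. The twist $R_1=F(\lambda)$ reflects the $p$-linearity convention: $u(F_*(a^p\cdot -))=a\,u(F_*-)$ for scalars.

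For the induction step, write
\[
G_{n+1}(b,0)G(c)=G_n(b,0)^p\,f^{p-2}\Delta(f)\,G(c).
\]
Then $u$ applied to this equals $G_n(b,0)$ times $u\bigl(F_*(f^{p-2}\Delta(f)G(c))\bigr)$, and the latter is $G(Tc)$ by the definition of $T$. Applying $u^{n-1}$ and the induction hypothesis gives the next row as $F(R_n)T$ applied to $c$. The main bookkeeping obstacle lies here: tracking exactly where the Frobenius twists $F(\cdot)$ land on coefficients, given that $u$ is $p^{-1}$-linear over $k$.

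\textbf{Step 2: the height formula.}
By the final assertion of \Cref{variant-fedder}, $\sht(S/fS)$ is the least $n$ such that the coefficient of $x^{p^n-1}$ in $f^{p-1}(f^{p(p-2)}\Delta(f))^{1+\cdots+p^{n-2}}$ is nonzero. This polynomial is exactly $G_n(b,0)f$. By Step 1 with $c=b$, that coefficient is $R_nv_f$. Finally, \Cref{variant-fedder}/Convention~\ref{conv:polynomial} (via \cite{KTY2}) identifies $\sht(X)$ with $\sht(A/fA)$.

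\textbf{Step 3: the non-splitting index.}
If $R_nv_f=0$ for all $n$, then $A/fA$ is not quasi-$F$-split. The last part of \Cref{rank-chara-ns} then gives that $\ns(f)$ is the least $n$ with $\rank\cG_n(b,0)\le n-1$.

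It remains to identify the rows of $\cG_n(b,0)$. The $i$-th row of $\cG_n(b,0)$ is the $i$-th row of $\cG_i(b,0)$, since $\cG_n$ stacks the rows for $1,\dots,n$. By Step 1 this row equals $R_i$ up to Frobenius twist on entries. Twisting a single row by an entrywise power of Frobenius changes neither linear dependence of the rows after a uniform twist nor the rank, since $k$ is perfect. One should check that the twists are uniform or absorbable; if not, apply $F^{-j}$ row by row and note that the rank condition in \Cref{rank-chara-ns} is only used through the existence of $c$ solving a triangular system, which is twist-invariant. This yields the stated rank formula.
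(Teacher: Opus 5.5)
Your outline follows the paper's proof: identify $R_n$ with the $n$-th row of $\mathcal{G}_n(b,0)$ using $G_{n+1}(b,0)=G_n(b,0)^p f^{p-2}\Delta(f)$ and $u(F_*(a^p b))=a\,u(F_*b)$. Then read off the height from \Cref{variant-fedder} with $c=b$, and the non-splitting index from \Cref{rank-chara-ns}. However, the Frobenius bookkeeping you postpone is the entire content of the proof, and the fallback you offer in Step~3 is false. Twisting different rows by different powers of Frobenius does change rank. For $a\in k\setminus\F_p$, the rows $(1,a)$ and $(1,a^p)$ are independent, but they become equal after applying $F$ to the first one. Solvability of $\mathcal{G}_n(b,0)c=e_n$ is not invariant under such rowwise twists either. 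So if the identification only held up to row-dependent twists, the rank formula would not follow. Your intermediate formulas are also off: the coefficient of $x^{p-1}$ in $f^{p-2}G(c)$ is $F(\lambda)c$, not $\lambda c$. And $u(F_*(f^{p-2}\Delta(f)G(c)))=G(TF^{-1}(c))$, not $G(Tc)$, because $T$ is a matrix for $F_*V$ in the basis $F_*M_i$.

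What closes the gap is that no twist occurs at all. Write $a=\sum_e a_e^p x^e$ with $0\le e_j\le p-1$, so that $u(F_*a)=a_{p-1}$. In $G_n(b,0)^p a=\sum_e (G_n(b,0)a_e)^p x^e$, only $e=p-1$ contributes to $x^{p^{n+1}-1}$. So that coefficient is the $p$-th power of the coefficient of $x^{p^n-1}$ in $G_n(b,0)\,u(F_*a)$.

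Let $\kappa_n$ be the row vector with $\kappa_n c$ equal to the coefficient of $x^{p^n-1}$ in $G_n(b,0)G(c)$. By definition this is the $n$-th row $(G_{n,i}(b,0))_i$ of $\mathcal{G}_n(b,0)$. Combining the three facts gives exactly $\kappa_1=F(\lambda)$ and $\kappa_{n+1}=F(\kappa_n T)=F(\kappa_n)F(T)$, with no residual twist. So the stacked matrix is literally $\mathcal{G}_n(b,0)$, and $\kappa_n v_f$ is literally the coefficient tested in \Cref{variant-fedder}. This is what the paper does via $R_l v_i=F^l\circ u^l(F^l_*(G_l(b,0)M_i))=G_{l,i}(b,0)$.

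This also shows that your claim that the induction yields $F(R_n)T$ is not what the computation gives with $T$ as literally defined. Its entries are the $p$-th roots of the coefficients of $x^{p-1}M_j^p$ in $\Delta(f)f^{p-2}M_i$, so the recursion that comes out is $F(R_n)F(T)$. This agrees with $F(R_n)T$ when $T$ has entries in $\F_p$ (e.g.\ $f$ over $\F_p$, as in the introduction), or when $T$ is read as the matrix of actual coefficients. The paper's own identity $R_n v_h=F^n(\lambda F^{-1}((T\circ F^{-1})^{n-1})(v_h))$ expands to $F^n(\lambda)F^{n-1}(T)\cdots F(T)v_h$, which is this same recursion.
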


\begin{proof}
For \(h\in V\), let \(v_h\) be the column vector determined by
\[
h=(M_1,\dots,M_m)v_h.
\]
Then
\[
(M_1 \ldots M_m) T \circ F^{-1}(v_h)=u(F_*(\Delta(f)f^{p-2}h)).
\]
Furthermore, we have
\[
\lambda F^{-1}(v_h)=u(F_*(f^{p-2}h)).
\]
Therefore, we have
\[
\lambda F^{-1}((T \circ F^{-1})^{n-1})(v_h)=u^n(F^n_*((f^{p(p-2)}\Delta(f)))^{1+p+\cdots+p^{n-2}}f^{p-2}h).
\]
In particular, we have
\begin{equation}\label{eq:R_n}
R_nv_h=F^n(\lambda F^{-1}((T \circ F^{-1})^{n-1})(v_h))=F^n \circ u^n(F^n_*((f^{p(p-2)}\Delta(f)))^{1+p+\cdots+p^{n-2}}f^{p-2}h).
\end{equation}
We have
\begin{align*}
   R_nv_f=F^n \circ u^n(F^n_*((f^{p(p-2)}\Delta(f)))^{1+p+\cdots+p^{n-2}}f^{p-1}).
\end{align*}
Thus, by \Cref{variant-fedder}, we have
\[
\sht(X)=\inf\{\,n\ge 1\mid R_nv_f\neq 0\,\}.
\]

Next, we prove the second assertion.
We assume $\sht(X)=\infty$, that is, $X$ is not quasi-$F$-split.
We use notations in \cref{notation:AI-vs-NS}.
By reordering appropriately, we assume that $M_i = x^{\alpha_i}$.
Then we have \(G(b)=f\).
Furthermore, we have
\[
R_l v_i =F^l \circ u^l(F^l_*(G_l(b,0)M_i)),
\]
where $v_i$ is the $i$-th standard basis vector of $k^m$.
Since the right-hand side is $G_{l,i}(b,0)$, we obtain
\[
\begin{pmatrix}
R_1\\
R_2\\
\vdots\\
R_{n}
\end{pmatrix}
=\mathcal{G}_n(b,0).
\]
Thus, the result follows from \Cref{rank-chara-ns}.
\end{proof}

\section{Ogus's result on K3 surfaces}
\label{Section:Ogus}

\subsection{Moduli stack of K3 surfaces}

\begin{definition}
Let $k$ be an algebraically closed field.
A smooth projective surface $X$ over $k$ is called a \emph{K3 surface} if $\Omega_{X/k}^2 \simeq \cO_{X}$ and $H^1(\cO_{X}) =0$.
A \emph{polarization} $L$ on $X$ is an ample line bundle on $X$.
A polarization $L$ is called \emph{primitive} if $L$ is not a positive power of a line bundle on $X$.
The \emph{degree} of $L$ is the self-intersection number $(L,L) \in 2\Z.$
The Artin-Mazur height of $X$ is denoted by $\sht (X)$.
We say $X$ is \emph{supersingular} when $\sht (X) = \infty.$
\end{definition}

Note that, by \cite{Yobuko}, $\sht (X)$ is equal to the quasi-$F$-split height of $X$.
Also, we note that $\sht (X) \in \{1,2, \ldots, 10, \infty\}$ (see \cite{Huybrechts}*{Chapter 18, Section 3} for example).

\begin{definition}
Let $S$ be a scheme.
A \emph{K3 space} $X$ over $S$ is a smooth proper algebraic space whose any geometric fiber is a K3 surface.
A primitive polarization $L$ of degree $2D$ on $X$ is an element $L \in \Pic_{X/S} (S)$ that restricts to a primitive polarization of degree $2D$ on each geometric fiber.
\end{definition}

\begin{definition}
Let $D\in\Z$.
Let $\mathcal{F}_{2D, \Z}$ be the moduli functor that sends a scheme $S$ to the groupoid consisting of primitively polarized K3 spaces $(X,L)$ over $S$ of degree $2D$.
\end{definition}

The stack $\mathcal{F}_{2D, \Z}$ is a Deligne-Mumford stack of finite type over $\Z$ by \cite{Rizov}*{Theorem 4.3.4} or \cite{Maulik}*{Proposition 2.1}.
We fix an algebraically closed field $k$ of characteristic $p>0.$
We denote the base change of moduli stack by $\mathcal{F}_{2D, k}$, or simply $\mathcal{F}_{2D}$.

\begin{definition}
Let $h \in \{1, 2, \ldots, 10, 11\}$.
We let 
\[
\mathcal{F}_{2D, 11} \subset
\mathcal{F}_{2D, 10} \subset \cdots \subset
\mathcal{F}_{2D, 1} = \mathcal{F}_{2D}
\]
be the moduli substacks of $\mathcal{F}_{2D}$ defined in \cite[p.334]{OgusK3}.
In particular, $\mathcal{F}_{2D,h} (k)$ consists of primitively polarized K3 surfaces $(X,L)$ over $k$ with $\sht (X) \geq h$.
\end{definition}

\begin{remark}
The substack $\mathcal{F}_{2D, h}$ is not necessarily reduced a priori. See \cref{thm:Ogus}.
\end{remark}

To state Ogus's theorem, we recall the definition of the Artin invariant and its variant.

\begin{definition}
\label{defn:artin-invariant}
Let $X$ be a K3 surface over an algebraically closed field $k$ of characteristic $p>0$.
\begin{enumerate}
    \item 
    When $X$ is supersingular,
    the Artin invariant of $X$ is the integer $\sigma (X)$ such that
    \[
    \disc \NS (X) = - p^{2\sigma (X)}.
     \]
     We have $\sigma (X) \in \{1,2, \ldots, 10\}$ (see \cite{Huybrechts}*{Chapter 17, Section 2} for example).
     We set $\sigma (X) := \infty$ for non-supersingular K3 surfaces.
    \item
    Let $L$ be a primitive polarization of degree $2D$ on $X$.
    Let $L_{\dR} \in H^2_{\dR} (X/k)$ be the first Chern class of $L$.
   When $X$ is supersingular, we define $\tau (X,L) \in \Z$ by
   \[
   \tau (X,L) = 
   \begin{cases}
   \sigma(X) &  \textup{if }L_{\dR} \notin E_{\sigma(X)}, \\
   \sigma(X) -1 & \textup{if } L_{\dR} \in E_{\sigma(X)}.
   \end{cases}
   \]
   For the definition of $E_{\sigma(X)}$, see \cite[p.328]{OgusK3} (see also \cite[Lemma 5]{OgusK3} for other descriptions).
   We set $\tau (X,L) = \infty$ for non-supersingular K3 surfaces.
\end{enumerate}
\end{definition}

\begin{remark}
\label{remark:tau}
\begin{enumerate}
    \item
Ogus \cite{OgusK3} treats the more general notion of
$p$-primitive polarizations, but for simplicity, we restrict ourselves to primitive polarizations.    
\item
Let $Q \in \Sym^2 (H^2_{\mathrm{cris}} (X/W(k))^{\vee})$ be the quadratic form given by
\[
Q(x) = \frac{1}{2} (x,x),
\]
where $(,)$ denotes the cup product on $H^2_{\mathrm{cris}} (X/W(k))$.
We denote the quadratic form on $H^2_{\dR} (X/k)$ induced by $Q \mod p$ by the same notation $Q$.
Then by the definition, $E_{\sigma (X)}$ is totally isotropic with respect to $Q$.
In particular, if $p \nmid D$, then we have $L_{\dR} \notin E_{\sigma(X)}$ and $\tau (X,L) = \sigma (X)$. 
\item
By \cite[Lemma 5]{OgusK3} and the Tate conjecture for K3 surface (\cite{Madapusi}, \cite{Kim-Madapusi}, and \cite{Madapusierratum} see also \cite{Ito-Ito-Koshikawa}*{Remarks 6.9 and 6.10}) (cf.\ \cite[Corollary 1.5]{Ogus-supersingularK3crystal}),
$E_{\sigma (X)}$ is isomorphic to 
\[
\Im (\mathrm{ch}_{\dR} \colon\Pic (X) \otimes_{\Z} k \rightarrow H^2_{\dR} (X/k)) ^{\perp} \subset H^2_{\dR} (X/k).
\]
In particular, if there exists a curve $V \subset X$ such that $(L,V) =1$ (i.e.\ $V$ is a line), then we have $\tau (X, L) = \sigma (X)$.
\end{enumerate}
\end{remark}

Ogus proved the following results.

\begin{theorem}[{\cite{OgusK3}*{Theorem 15}}, cf.\ {\cite{vdGeer-Katsura}}]
\label{thm:Ogus}
Let $h \in \{1,2,\ldots, 10,11\}.$
\begin{enumerate}
\item 
The closed substack $\mathcal{F}_{2D,h} \subset \mathcal{F}_{2D}$ is a locally complete intersection of purely dimension $20-h$.
\item
Let $(X,L) \in \mathcal{F}_{2D,h} (k)$ be a primitively polarized K3 surface of degree $2D$ over $k$ with $\sht(X) \geq h$.
Then the following are equivalent.
\begin{enumerate}
    \item 
The stack $\mathcal{F}_{2D, h}$ is singular at $(X,L)$.
\item 
The K3 surface $X$ is supersingular and $\tau(X,L) <h.$
\end{enumerate}
\end{enumerate}
In particular, $\mathcal{F}_{2D,h}$ is reduced if $1 \leq h \leq 10$, and generically non-reduced if $h=11.$
\end{theorem}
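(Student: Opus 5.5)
This statement is \cite{OgusK3}*{Theorem 15}, which the paper cites rather than proves. The plan below is how I would reconstruct Ogus's argument through deformation theory and the crystalline structure of $H^2$.

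I would work locally on $\mathcal{F}_{2D}$ at a point $(X,L)$. The formal deformation space of a primitively polarized K3 surface is formally smooth of dimension $19$. By Deligne's theory it is the hyperplane in the $20$-dimensional deformation space of $X$ cut out by the condition that $L$ lifts, so it is described by $H^1(T_X)$ modulo the obstruction $L_{\dR}$. On this formal scheme $\mathrm{Def}_{(X,L)}$, I would describe $\mathcal{F}_{2D,h}$ inductively by \emph{Hasse-type invariants}. On the locus where the height is at least $i$, the Frobenius on the relative de Rham (or formal Brauer/Dieudonn\'e) cohomology induces a $p^i$-linear map on the Hodge line $\omega=H^0(\Omega^2)$ or its dual. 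This map is a section $\mathrm{Ha}_i$ of a suitable power of the Hodge bundle, and $\mathcal{F}_{2D,i+1}=V(\mathrm{Ha}_i)\cap\mathcal{F}_{2D,i}$ scheme-theoretically, which I would take as the definition on p.~334 of \cite{OgusK3}. Hence $\mathcal{F}_{2D,h}$ is cut out locally by $h-1$ equations in a smooth $19$-dimensional space, so every component has dimension at least $20-h$.

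For the upper bound I would argue pointwise by dimension. For $h\le 10$, the non-supersingular strata have the expected dimension because the height drops under generic deformation. For $h=11$, the supersingular locus has dimension $9$ by Artin's theorem, or equivalently via Ogus's period map to the moduli of supersingular K3 crystals. Equality of dimensions plus $h-1$ local equations then gives that $\mathcal{F}_{2D,h}$ is a local complete intersection of pure dimension $20-h$, which proves (1).

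For (2), $\mathcal{F}_{2D,h}$ is smooth at $(X,L)$ iff the differentials $d\mathrm{Ha}_1,\ldots,d\mathrm{Ha}_{h-1}$ are linearly independent in the cotangent space $H^1(T_X)^\vee$ modulo $L$.
\begin{itemize}
\item I would compute $d\mathrm{Ha}_i$ using the Gauss--Manin connection and the Kodaira--Spencer isomorphism $H^1(T_X)\cong H^1(\Omega^1)$ given by contraction with $\omega$. Griffiths transversality should identify $d\mathrm{Ha}_i$, up to a unit, with the functional $\xi\mapsto Q(\xi, F^{-i}\text{-iterate of the conjugate line})$.
\item This reduces (2) to linear algebra in $H^2_{\dR}(X/k)$: the relevant vectors are the Frobenius-conjugate lines $e_1,\ldots,e_{h-1}$. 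Independence of $d\mathrm{Ha}_1,\ldots,d\mathrm{Ha}_{h-1}$ modulo $L$ becomes independence of the $e_j$ modulo $k\cdot L_{\dR}$.
\item In the non-supersingular case, the $e_j$ span a subspace of the conjugate filtration coming from the slope decomposition that is disjoint from the algebraic classes. So the differentials are independent and the stratum is smooth.
\item In the supersingular case, the $e_j$ span the flag $E_1\subset E_2\subset\cdots$ built from Ogus's characteristic subspace, which stabilizes at $E_{\sigma(X)}$. The $e_j$ are independent modulo $L_{\dR}$ exactly for $j\le\sigma$, with one less if $L_{\dR}\in E_\sigma$. That is, they are independent for $j\le\tau(X,L)$, which yields singularity precisely when $\tau(X,L)<h$.
\end{itemize}

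The final reducedness claim then follows. For $h\le10$ the singular locus is contained in the supersingular locus, which has codimension at least $1$, so the lci scheme is generically smooth and hence reduced. For $h=11$, every point satisfies $\tau\le10<11$, so $\mathcal{F}_{2D,11}$ is singular everywhere and hence generically non-reduced.

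I expect the main obstacle to be the supersingular linear algebra. One must show that the iterated Frobenius lines generate exactly the flag $E_j$ of \cite{OgusK3}, and pin down where $L_{\dR}$ sits relative to $E_\sigma$, which requires the full structure theory of supersingular K3 crystals. I would first try to verify the identification $d\mathrm{Ha}_i\leftrightarrow e_i$ in the toy case $h=2$, where it recovers \cite{OgusHasse}, and then induct on $i$.
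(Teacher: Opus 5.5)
The paper does not prove this statement. It is quoted from Theorem~15 of \cite{OgusK3}, so your outline can only be judged on its own terms. The architecture is sensible: strata cut out by successive Hasse-type invariants, differentials computed via Gauss--Manin and Kodaira--Spencer, then linear algebra with Ogus's flag. However, the decisive supersingular count is off by one. You claim two things:
\begin{enumerate}
\item $\mathcal{F}_{2D,h}$ is smooth at $(X,L)$ iff $e_1,\dots,e_{h-1}$ are independent modulo $L_{\dR}$;
\item the $e_j$ are independent exactly for $j\le\tau(X,L)$.
\end{enumerate}
Together these give ``singular iff $\tau(X,L)<h-1$'', not ``iff $\tau(X,L)<h$''. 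For $h=2$ and $\sigma(X)=\tau(X,L)=1$, your count makes the Hasse locus smooth at $(X,L)$. In fact the Hasse invariant vanishes to order two there (\cite{OgusHasse}, \cite{Bhatt-Singh}), and this is exactly the toy case you planned to check. For $h=11$, your count makes $\mathcal{F}_{2D,11}$ smooth wherever $\tau(X,L)=10$, which is the generic supersingular point when $p\nmid D$. That contradicts the generic non-reducedness you derive in your last paragraph.

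The missing observation is this. A functional $\xi\mapsto Q(\xi,e)$ with $\xi\in H^1(X,\Omega^1_X)=F^1/F^2$ only sees $e$ modulo the Hodge line $F^2=H^0(X,\Omega^2_X)$, because $Q(F^2,F^1)=0$. Moreover $F^2\subseteq E_{\sigma(X)}$: by Remark~\ref{remark:tau}(3), $E_{\sigma(X)}$ is the orthogonal of the image of $\Pic(X)\otimes_{\Z}k$, and that image lies in $F^1=(F^2)^{\perp}$. In particular, when $\sigma(X)=1$ the space $E_\sigma$ is a line containing both $F^2$ and (by your own description) the conjugate line. So the two coincide and $d\mathrm{Ha}_1=0$.

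Hence, if the $e_j$ lie in $E_\sigma$, the differentials span at most the image of $E_\sigma$ in $H^2_{\dR}(X/k)/(F^2+kL_{\dR})$. That image has dimension $\sigma-1$ if $L_{\dR}\notin E_\sigma$, and $\sigma-2$ if $L_{\dR}\in E_\sigma$; here $L_{\dR}\notin F^2$ because the deformation space of $(X,L)$ is smooth of dimension $19$. In both cases this is $\tau(X,L)-1$. What you must show is that $d\mathrm{Ha}_1,\dots,d\mathrm{Ha}_j$ are independent modulo $L_{\dR}$ iff $j\le\tau(X,L)-1$. In other words, the Hodge line is the first step of the flag, and $F^2,e_1,\dots,e_j$ span a $(j+1)$-dimensional subspace of $E_\sigma$ for $j<\sigma$.

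Two smaller points. First, the bound $\dim\mathcal{F}_{2D,h}\le 20-h$ for $h\le 10$ should come from the finite-height half of (2), not from ``the height drops under generic deformation''. Every component has dimension $\ge 20-h>9$, so it meets the finite-height locus, where the stratum is smooth of dimension $20-h$. Second, $\mathrm{Ha}_i$ exists only on $\mathcal{F}_{2D,i}$, so its differential lives on the tangent space of $\mathcal{F}_{2D,i}$, not on all of $H^1(T_X)$. The argument should therefore run inductively. It uses that once $\mathcal{F}_{2D,i}$ is singular at $(X,L)$, so is every later stratum, each being a Cartier divisor in the previous one by (1).
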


\subsection{Families of smooth K3 hypersurfaces}

As in the previous section, let \(k\) be a fixed algebraically closed field of characteristic \(p>0\).
In this subsection, we let $A:=k[x_0, \ldots x_3]$ be a polynomial ring with graded structure with 
\[
(\deg (x_0), \ldots \deg(x_3)) = \quad \mathrm{either}\quad (1,1,1,1) \quad \textup{or} \quad (1,1,1,3).
\]
We put $d :=4$ and $D :=2$ (resp.\ $d:=6$ and $D :=1$) in the former (resp.\ latter) case.
Let
\[
W:=\Proj A,\qquad S_d:=A_d=H^0(W,\mathcal O_W(d)).
\]
Then we have a universal hypersurface
\[
\mathcal X \subset \P(S_d)\times W
\]
over $\P(S_d)$.
Let $B_{2D}\subset \P(S_d)$ be the open locus over which
$\mathcal X\to \P(S_d)$ is smooth.
Then the base change $\pi \colon\mathcal{X}_{B_{2D}} \rightarrow B_{2D}$ is a smooth K3 space.

\begin{remark}
\begin{enumerate}
\item 
By the explicit construction, one can show that the above $B_{2D}$ is non-empty.
The case where $D=2$ is well-known.
In the case where $D=1$, we can show that
$f=x_0^6+x_1^6+x_2^6+x_3^2$ (resp.\ $f=x_3^2+x_3x_1^3+x_0^5x_2 + x_1x_2^5
$) defines a smooth K3 surface when $p\neq 2,3$ (resp.\ $p\neq 5$).

\item
There exist 95 families of (well-formed) RDP K3 hypersurfaces in weighted projective spaces.
However, among them, only the above two degrees give smooth members, i.e.\ $B_{2D} \neq \emptyset$ (see \cite{Fletcher}*{II.3.3}).
\end{enumerate}
\end{remark}

We shall relate this $B_{2D}$ with the moduli stack of polarized K3 surfaces.
For every geometric point $b\in B_{2D}$, the fiber $X_b$ is a smooth hypersurface of degree $d$ in $W$.
In both cases, the restriction
$
L_b:=\mathcal O_W(1)|_{X_b}$
is an ample line bundle on $X_b$.
In the quartic case, this is obvious. In the weighted sextic case, $W=\P(1,1,1,3)$ has
a unique singular point, and every smooth member avoids it; hence $\mathcal O_W(1)|_{X_b}$ is
again a line bundle.
Moreover, we have
$L_b^2=4$ (resp.\ $L_b^2=2$) in the former case (resp.\ in the latter case).
By the degree reason, this is a primitive polarization.
Therefore the family $(\mathcal X_{B_{2D}},\mathcal L)$, where
$
\mathcal L:=\mathcal O_W(1)|_{\mathcal X_{B_{2D}}},
$
defines a morphism
$\mu:B_{2D}\rightarrow \mathcal F_{2D}.$

\begin{proposition}
\label{prop:smoothsurjection}
The morphism $\mu$ is smooth and representable. 
\end{proposition}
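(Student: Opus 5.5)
Representability is straightforward and I will dispose of it first. For a geometric point $b\in B_{2D}$, automorphisms of $\mu(b)=(X_b,L_b)$ are automorphisms of $X_b$ preserving $L_b$. The point $b$ itself carries no automorphisms, since $B_{2D}$ is a scheme. More precisely, for any scheme $T\to\mathcal F_{2D}$ the fiber product $B_{2D}\times_{\mathcal F_{2D}}T$ parametrizes an isomorphism of polarized families $(\mathcal X_T,\mathcal L_T)\simeq (X_T,L_T)$. Such a datum is a closed-immersion-type structure, namely a choice of graded generators of the section ring of $L_T$ up to the allowed normalization, and it is representable by an algebraic space. To make this explicit, I would use that $\bigoplus_j \pi_*L^j$ is locally free with known ranks by Kodaira vanishing and Riemann--Roch for K3 surfaces, which holds in all characteristics for ample $L$ on a K3. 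I would also use that the sections in degrees $1$ and $3$ determine the embedding into $W$. The fiber product is then an open subscheme of a frame bundle (a torsor under the group of graded automorphisms of $A$) over $T$, cut out by the condition that the relation lies in degree $d$.

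For smoothness, I would argue in two steps. First, the frame-bundle description identifies $B_{2D}\times_{\mathcal F_{2D}}T$ with an open subscheme of a torsor under the smooth group $G:=\Aut_{gr}(A)$ modulo scalars acting on $\P(S_d)$. Such a torsor is smooth over $T$, since locally free sheaves of sections give affine-space bundles and the invertibility conditions are open. The key point to check is that, fiberwise, $(X,L)$ with $L$ of degree $2D$ ($D=2$ quartic, $D=1$ weighted sextic), coming from a point of $B_{2D}$, has its section ring $\bigoplus H^0(L^j)$ generated as in $A/f$. In the quartic case, $L$ is very ample and projectively normal (Saint-Donat). In the sextic case, $|L|$ gives a double cover of $\P^2$, and an extra degree-$3$ generator appears. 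I need this only on the image of $\mu$, i.e.\ for $(X,L)$ that are isomorphic to smooth members. The generation property is open in families by cohomology and base change, so the fiber product really is the torsor over an open part of $T$.

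Alternatively, and more robustly, I would verify smoothness by the infinitesimal lifting criterion. Both $B_{2D}$ and $\mathcal F_{2D}$ are smooth over $k$: the first is open in projective space, and the second by deformation theory of polarized K3 surfaces (Deligne, using $H^2(T_X)=0$ and unobstructedness of the pair, noting the dimension is $19$ even when $p\mid D$, as the paper's use of Ogus's setup presupposes). So it suffices to show that the tangent map $T_bB_{2D}\to T_{\mu(b)}\mathcal F_{2D}=H^1(X,\mathcal P_L)$ is surjective, where $\mathcal P_L$ is the Atiyah extension sheaf. From the normal sequence of $X\subset W$ and the Euler sequence, the deformations of $f$ inside $S_d/(k f)$ map to $H^0(N_{X/W})=H^0(\cO_X(d))$, and the cokernel is controlled by $H^1(T_W|_X)$. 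Using the (weighted) Euler sequence and $H^1(\cO_X(q_i))=0$, $H^2(\cO_X)\to \bigoplus H^2(\cO_X(q_i))$, I get that $H^0(N_{X/W})\to H^1(T_X)$ hits exactly the deformations preserving the class of $L$. That image is the image of $H^1(\mathcal P_L)$, giving surjectivity.

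The main obstacle will be this cohomological bookkeeping in the weighted case. Near the singular point of $\P(1,1,1,3)$ the Euler sequence holds only on the smooth locus, but $X$ avoids that point, so I will work on $W\setminus\{\text{pt}\}$. There remains the dimension check: $\dim B_{2D}-\dim G'=19$ in both cases ($34-15=19$ and $\dim \P(S_6)-(\dim\Aut_{gr}-1)=19$).
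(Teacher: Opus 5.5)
Your frame-bundle route is essentially the paper's proof. The paper restricts to the open substacks $\mathcal U_4$ (very ample) and $\mathcal U_2$ (base-point free) and shows that every family there is \'etale locally a family of hypersurfaces in $W$. It then identifies $B_{2D}\times_{\mathcal U}S$ with $\Aut_{W_S/S}$ (that is, $\PGL_4$, resp.\ $\Aut\P(1,1,1,3)$), which is smooth. Your torsor of projective graded frames of $\bigoplus_j\pi_*L^j$ over the open locus is the same object, built directly from the family instead of from an \'etale-local lift.

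The step you pass over, ``an extra degree-$3$ generator appears'', is where the paper does real work. For a base-point-free degree-$2$ family it proves $\varphi_*\cO_X\simeq\cO_{\P^2_S}\oplus\cO_{\P^2_S}(-3)$, using Grothendieck duality together with $H^1(\P^2_S,\cO(3))=0$ for affine $S$. This is what makes every such family, and every frame on it, land in $B_2$, and so makes your frame space a genuine torsor over that locus. Incidentally, representability by algebraic spaces is automatic for any morphism from a scheme to $\mathcal F_{2D}$, since the diagonal is representable; the torsor description only upgrades the fibre products to schemes.

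Your ``more robust'' alternative contains a false step: $\mathcal F_{2D}$ need not be smooth over $k$ when $p\mid D$, and pairs $(X,L)$ are not unobstructed in general. Deligne only gives that $\mathrm{Def}(X,L)$ is a flat relative hypersurface in the formally smooth $\mathrm{Def}(X)$, which has relative dimension $20$ over $W(k)$. Its special fibre has a $20$-dimensional tangent space $\ker\bigl(H^1(T_X)\xrightarrow{\cup c_1(L)}H^2(\cO_X)\bigr)$ whenever $c_1(L)$ dies in $H^1(X,\Omega^1_X)$. Indeed Theorem~\ref{thm:Ogus} with $h=1$ says $\mathcal F_{2D}$ is singular exactly at supersingular $(X,L)$ with $\tau(X,L)=0$. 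Remark~\ref{remark:tau}(2) excludes this for $p\nmid D$, but not for $p\mid D$, and $D=2$, $p=2$ is precisely the quartic case in characteristic $2$. Pure dimension $19$ does not give smoothness.

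The fix is that you never need smoothness of the target. Since $B_{2D}$ is smooth, surjectivity of $T_bB_{2D}\to H^1(X,\mathcal P_L)$, computed on an \'etale chart, already forces the complete local ring of the chart at $\mu(b)$ to be regular. To see this, write it as a quotient of $k[[y_1,\dots,y_r]]$ with $r$ its embedding dimension. The composite to $\hat\cO_{B,b}$ is injective because the $y_i$ go to part of a regular system of parameters. Then $\mu$ is smooth at $b$.

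Your Euler-sequence computation does give that surjectivity with no hypothesis on $c_1(L)$, once you note one fact. The restricted (weighted) Euler sequence, pulled back along $T_X\to T_W|_X$, is the Atiyah extension of $L$. Hence the image of $H^0(N_{X/W})$ in $H^1(T_X)$ is $\ker\bigl(H^1(T_X)\to H^2(\cO_X)\bigr)\cong H^1(X,\mathcal P_L)$, using $H^1(\cO_X)=0$.

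Repaired this way, the deformation-theoretic route proves smoothness, and as a by-product smoothness of $\mathcal F_{2D}$ along the image of $\mu$. Unlike the torsor argument, however, it does not produce the \'etale-local sections over $\mathcal U_{2D}$ that the paper uses when it calls $\mu'$ a smooth surjection onto $\mathcal U_{2D}$.
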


\begin{proof}


\smallskip
\noindent
{\bf Case 1: $(\deg x_i)=(1,1,1,1)$ and $D=2$.}

Let $\mathcal U_4\subset \mathcal F_{4}$ be the substack whose $S$-points are polarized K3
spaces $(\pi:X\to S,\lambda)$ such that for every geometric point $\overline s\to S$, the polarization
$\lambda_{\overline s}$ is a very ample line bundle.
Clearly, this is an open substack.

By construction, for every $b\in B_{2D}$, the polarized K3 surface
$(X_b,\mathcal O_{X_b}(1))$
lies in $\mathcal U_4$, so $\mu$ factors through $\mu' \colon B_4 \rightarrow \mathcal U_4$.

We show that $\mu'$ admits a section \'{e}tale locally (, and in particular, $\mu'$ is surjective.)
 let $(f:X\to S,L)$ be an $S$-point of $\mathcal U_4$.
After replacing $S$ by an \'etale cover, we may assume that $f_*L$ is free of rank $4$.
Choosing a basis gives an identification
$\P(f_*L)\simeq \P^3_S,
$
and the complete linear system $|L|$ yields a closed immersion $X\hookrightarrow \P^3_S.
$
Fiberwise, the image is a quartic surface. Hence its ideal sheaf $\mathcal I_X$
satisfies
$
\mathcal I_{X_{\overline s}}\simeq \mathcal O_{\P^3_{\overline s}}(-4)
$
for every geometric point $\overline s\to S$.
By the see-saw theorem, there exists a line bundle $M$ on $S$ such that
$
\mathcal I_X(4)\simeq p^*M.
$
Equivalently, $X$ is cut out by a section of
$p^*M^{-1}\otimes \mathcal O_{\P^3_S}(4).$
After replacing $S$ by an \'etale cover, we may assume that $M$ is trivial, and then
$X$ is cut out by a section of $\mathcal O_{\P^3_S}(4)$.
Thus, after an \'etale localization on $S$, the family is induced from the universal quartic family,
so $\mu:B_{4}\to \mathcal U_4$ is surjective.

To prove the smoothness, one argues exactly as in Step~3 of the proof of \cite{Rizov}*
{Theorem~4.3.3}, with $L$ in place of $L^{\otimes 3}$.
Namely, for any morphism $S\to \mathcal U_4$ corresponding to $(X/S,L)$, after an \'etale cover
$S'\to S$,
$S'\times_{\mathcal U_4} B$
is isomorphic to 
\[
\Aut_{\P^3_{S'}/S'} \simeq \PGL_{4, S'}.
\]
In particular, it is smooth over $S'$, and therefore $B_{4}\to \mathcal U_4$ is smooth.
The representability also follows from the same argument as in \cite{Rizov}*{Theorem~4.3.3}.

\smallskip
\noindent
{\bf Case 2: $(\deg x_i)=(1,1,1,3)$ and $D=1$.}

Note that we have
\[
h^0 (\lambda_{\overline{s}}) =3, \quad h^1 (\lambda_{\overline{s}}) = h^2 (\lambda_{\overline{s}}) = 0
\]
for any geometric point $\overline{s}$ on a $k$-scheme $S$ and any $S$-point $(\pi \colon X \rightarrow S, \lambda)$ of $\mathcal{F}_2$.
Let $\mathcal{U}_2 \subset \mathcal{F}_2$ be the substack whose $S$-points are $(\pi \colon X \rightarrow S, \lambda)$ such that for every geometric point $\overline{s} \rightarrow S,$ the polarization $\lambda_{\overline{s}}$ is a base-point free line bundle.
Clearly, this is an open substack.

We claim that $\mu$ factors through $\mathcal U_2$.
Since $\mathcal U_2\hookrightarrow \mathcal F_{2}$ is an open immersion, it suffices to check this on geometric points.
Let $\overline b\to B_{2}$ be a geometric point.
Then $\mathcal{X}_{\overline{b}}\subset \P(1,1,1,3)$ is a smooth weighted sextic, so its defining equation has the form
\[
x_3^2+x_3\,q_3(x_0,x_1,x_2)+q_6(x_0,x_1,x_2)=0.
\]
The complete linear system $|\mathcal{L}_{\overline{b}}|$ is generated by the weight-$1$ coordinates $x_0, x_1, x_2$, hence induces a morphism
\[
\varphi_{|\mathcal{L}_{\overline b}|}:\mathcal{X}_{\overline b}\to \P^2_{\overline b}
\]
(, which is a finite flat morphism of degree $2$).
Therefore $\mu$ factors through
$
\mu'\colon B_{2}\rightarrow \mathcal U_2.
$

Next, we prove that $\mu' \colon B_{2} \rightarrow \mathcal{U}_2$ admits a section \'{e}tale locally (and in particular, $\mu'$ is surjective.).
Let $S\to \mathcal U_2$ correspond to a polarized K3 space $(f:X\to S, \lambda)$.
After replacing $S$ by an \'etale cover, we may assume that $\lambda$ is a line bundle $L$, and $f_*L$ is free of rank $3$, so that $\P(f_*L)\simeq \P^2_S$.
Note that $f^* f_* L \rightarrow L$ is surjective by the definition of $\mathcal{U}_2$.
Let
\[
\varphi:=\varphi_L:X\to \P(f_*L)\simeq \P^2_S
\]
be the induced morphism.
By the fiberwise criterion for flatness, $\varphi$ is a finite flat double cover.
Set $\mathcal A:=\varphi_*\mathcal O_X.$
Note that $\mathcal A$ is a locally free
$\mathcal O_{\P^2_S}$-algebra of rank $2$.
The unit section of $\mathcal A$ induces a morphism
$
u:\mathcal O_{\P^2_S}\rightarrow \mathcal A.
$
Then $u$ defines a line subbundle of $\mathcal A$, and the cokernel
$
\mathcal Q:=\operatorname{coker}(u)$
is invertible.
We therefore obtain an exact sequence
\[
0\rightarrow \mathcal O_{\P^2_S}
\rightarrow \mathcal A
\rightarrow \mathcal Q
\rightarrow 0.
\]
We next determine $\mathcal Q$.
Since $\det(\mathcal A)\simeq \mathcal Q$, we have
$
\mathcal A^\vee\simeq \mathcal A\otimes \mathcal Q^{-1}.
$
By the Grothendieck duality,
\[
\varphi_*\omega_{X/S}
\simeq
\mathcal Hom_{\P^2_S}(\mathcal A,\omega_{\P^2_S/S})
\simeq
\mathcal A^\vee\otimes \omega_{\P^2_S/S}
\simeq
\mathcal A\otimes \mathcal Q^{-1}\otimes \omega_{\P^2_S/S}.
\]
After replacing $S$ by an \'{e}tale cover, we may assume that
$\omega_{X/S} \simeq \cO_{X}$.
Therefore, we have
$
\mathcal{A} \simeq \mathcal{A} \otimes \mathcal{Q}^{-1} \otimes \omega_{\P^2_{S}/S}.
$
Taking the determinant, we have $\mathcal{Q}^{\otimes 2} \simeq \omega_{\P^2_{S}/S}^{\otimes 2}$.
After replacing $S$ again, we may assume that $\mathcal{Q} \simeq \omega_{\P^2_{S}/S} \simeq \cO_{\P^2_{S}}(-3)$.
Thus
\[
0\rightarrow \mathcal O_{\P^2_S}
\rightarrow \mathcal A
\rightarrow \mathcal O_{\P^2_S}(-3)
\rightarrow 0.
\]
After replacing $S$ by an affine cover, this sequence splits.
Indeed, if $p:\P^2_S\to S$ denotes the projection, then
\[
\Ext^1_{\P^2_S}\bigl(\mathcal O_{\P^2_S}(-3),\mathcal O_{\P^2_S}\bigr)
=
H^1(\P^2_S,\mathcal O_{\P^2_S}(3))
=
H^1\bigl(S,p_*\mathcal O_{\P^2_S}(3)\bigr)
=0,
\]
because $R^1p_*\mathcal O_{\P^2_S}(3)=0$ and $S$ is affine.
Hence
$
\mathcal A\simeq
\mathcal O_{\P^2_S}\oplus \mathcal O_{\P^2_S}(-3).
$
Then we have
\[
\varphi_* L \simeq \cO_{\P^2_S} (1) \oplus \cO_{\P^2_{S}}(-2), \quad \varphi_{*} L^{\otimes3} \simeq \cO_{\P^2_S} (3) \oplus \cO_{\P^2_{S}}.  
\]
Take a $\cO_{S}$-basis $\{x_0, x_1, x_2\}$ of $H^0 (\varphi_* L) = H^0 (L)$.
Let $x_3 \in H^0 (\varphi_* L^{\otimes 3}) = H^0 (L^{\otimes 3})$ be the section corresponding to the unit of the second factor $\cO_{\P^2_S}$.
Then $(x_0,x_1,x_2,x_3)$ defines a morphism 
\[
h \colon X \rightarrow \P_S(1,1,1,3).
\]
The algebra structure of $\mathcal{A}$ gives a morphism
\[
\cO_{\P^2_S} (-3) \otimes \cO_{\P^2_S} (-3) \rightarrow \cO_{\P^2_S} \oplus \cO_{\P^2_S} (-3),
\]
corresponding to global sections $q_3 \in \cO_{\P^2_S} (3)$ and $q_6 \in \cO_{\P^2_S} (6)$.
Then we can show that $h$ induces an isomorphism 
\[
X \simeq \{x_3^2 = x_3q_3 (x_0, x_1, x_2)  + q_6 (x_0, x_1, x_2)\} \subset \P_S (1,1,1,3).
\]
Therefore, after \'etale localization on $S$, $X$ is isomorphic to $\mathcal{X}_S$ for some morphism $S\to B_2.$
Thus $\mu'$ admits a section \'{e}tale locally.

We now prove that
$\mu'$
is smooth.
Let $S \rightarrow \mathcal{U}_2$ be an $S$-point.
It suffices to show that $B_2 \times_{\mathcal{U}_2} S$ is smooth over $S$.
By what we have seen above, we may assume that there is a lift
$
\sigma:S\rightarrow B_{2}
$
of a given morphism
$
S\rightarrow \mathcal U_2$ such that
the corresponding weighted sextic family
\[
i: \mathcal{X}_S \hookrightarrow W_S:=\P_S(1,1,1,3)
\]
is defined by a global section of $H^0 (\sO_{W_S} (6))$.
Let $
\mathcal{L}_S:=i^*\mathcal O_{W_S}(1).$
We write
\[
G:=\Aut_{W_S/S}=\Aut_{\P_S(1,1,1,3)/S}.
\]
We show that
$
B_2\times_{\mathcal{U}_2}S \simeq G
$
as $S$-schemes.
We construct a morphism
\[
\theta:G\rightarrow B_2\times_{\mathcal{U}_2}S
\]
as follows.
For any $S$-scheme $T$ and any
\[
g \in G(T)=\Aut_T\bigl(W_T\bigr),
\]
the image
$
g (\mathcal{X}_T)\subset W_T 
$
(where $\mathcal{X}_T := \mathcal{X}_S \times _S T$)
is again a smooth weighted sextic, hence determines a morphism
$g \cdot \sigma_T:T\rightarrow  B_{2}.$
Moreover, the restriction
\[
g|_{\mathcal{X}_T}:\mathcal{X}_T\stackrel{\sim}{\rightarrow} g(\mathcal{X}_T)
\]
is an isomorphism preserving the polarization $\mathcal O(1)$.
This gives a $T$-point of
$B_2\times_{\mathcal{U}_2}S$.

We now construct the inverse map on $T$-points.
Let $T\to S$ be an $S$-scheme.
A $T$-point of
$
B_2\times_{\mathcal{U}_2}S
$
consists of a morphism
$
\beta:T\to B_{2}
$
corresponding to $(\mathcal{X}_{\beta}, L_{\beta})$,
together with an isomorphism 
$\alpha:\mathcal{X}_T\xrightarrow{\sim}\mathcal{X}_\beta$
such that $\alpha^{\ast} \mathcal{L}_{\beta} \simeq \mathcal{L}_T \otimes \pi_{T}^{*} \mathcal{M}$ for some line bundle $\mathcal{M}$ on $T$,
where
\[
i_T:\mathcal{X}_T\hookrightarrow W_T,\qquad i_\beta:\mathcal{X}_\beta\hookrightarrow W_T
\]
denote the two embeddings into $W_T=\P_T(1,1,1,3)$.
Let $T = \bigcup U_i$ be an open cover where $\mathcal{M}_{U_i}$ is trivial. 
We also assume that $U_i$ is affine.
We fix $i$ in the following.
We choose a linearization $\widetilde{\alpha} \colon \alpha^{\ast} (\mathcal{L}_{\beta})|_{U_i} \simeq (\mathcal{L}_T)|_{U_i}$, that is unique up to $\Gamma (\cO_{U_i}^{\times})$.

We claim that there exists a unique
$g\in G(T)$
such that
$
i_\beta\circ \alpha = g\circ i_T.
$
For each \(n\ge 0\), the chosen linearization \(\widetilde{\alpha}\) induces an isomorphism
\[
\widetilde{\alpha}^{\otimes n}:\alpha^{*}(\mathcal L_{\beta}^{\otimes n})|_{U_i}
\;\xrightarrow{\ \sim\ }\;
(\mathcal L_T^{\otimes n})|_{U_i}.
\]
Since \(\alpha\) is an isomorphism over \(U_i\), pushing forward along the structure morphisms yields
an isomorphism of graded \(\mathcal O_{U_i}\)-algebras
\[
\Phi_i:\;
 R_{\beta,U_i}
:=
\bigoplus_{n\ge 0} (\pi_{\beta})_{*}(\mathcal L_{\beta}^{\otimes n})|_{U_i}
\;\xrightarrow{\ \sim\ }\;
 R_{T, U_i}
:=
\bigoplus_{n\ge 0} (\pi_{T})_{*}(\mathcal L_{T}^{\otimes n})|_{U_i}.
\]
Equivalently, \(\widetilde{\alpha}\) induces an isomorphism of the relative affine cones
\[
\Spec_{U_i}( R_{\beta,U_i})
\;\xrightarrow{\ \sim\ }\;
\Spec_{U_i}( R_{T, U_i}).
\]

Now set
\[
A_{U_i}:=\mathcal O_{U_i}[x_0,x_1,x_2,x_3],\qquad
\deg x_0=\deg x_1=\deg x_2=1,\  \deg x_3=3.
\]
The two weighted sextic embeddings
\[
i_T:\mathcal X_T|_{U_i}\hookrightarrow \P_{U_i}(1,1,1,3),
\qquad
i_\beta:\mathcal X_\beta|_{U_i}\hookrightarrow \P_{U_i}(1,1,1,3)
\]
give surjective graded \(\mathcal O_{U_i}\)-algebra homomorphisms
\[
q_{T,i}:A_{U_i}\twoheadrightarrow \mathcal R_{T,U_i},
\qquad
q_{\beta,i}:A_{U_i}\twoheadrightarrow \mathcal R_{\beta,U_i}.
\]
By the construction of $\mathcal{X} \subset \P(S_d) \times W$, the kernels of the above maps are generated by sub-line bundles
 \(K_{T, U_i}\subset (A_{U_i})_6\) and \(K_{\beta, U_i}\subset (A_{U_i})_6\) on $U_i$, that are pullbacks of $\cO_{\P(S_6)} (-1)|_{B_2} \subset \pi_* (\mathcal{L}^{\otimes 6})$ corresponding to the universal family $\mathcal{X}.$
After refining the open cover if necessary, we may assume that
\[
K_{T, U_i}=\mathcal O_{U_i}\cdot f_i,\qquad
K_{\beta, U_i}=\mathcal O_{U_i}\cdot g_i
\]
for some homogeneous elements
$f_i,\ g_i\in \Gamma ((A_{U_i})_6).$
Then
\[
R_{T,U_i}\simeq A_{U_i}/(f_i),
\qquad
R_{\beta,U_i}\simeq A_{U_i}/(g_i),
\]
and \(\Phi_i\) may be viewed as a graded \(\mathcal O_{U_i}\)-algebra isomorphism
\[
\varphi_i:\;
A_{U_i}/(g_i)\xrightarrow{\ \sim\ }A_{U_i}/(f_i).
\]

Choose homogeneous lifts in \(A_{U_i}\) of the images under \(\varphi_i\) of the classes of
\(x_0, x_1, x_2, x_3\). This yields a graded \(\mathcal O_{U_i}\)-algebra endomorphism
$
\widetilde{\varphi}_i:A_{U_i}\rightarrow  A_{U_i}
$
lifting \(\varphi_i\). Applying the same construction to \(\varphi_i^{-1}\), we obtain a graded endomorphism
$
\widetilde{\psi}_i:A_{U_i}\rightarrow A_{U_i}.
$
Since \(\deg f_i=\deg g_i=6\), the quotient maps
$
A_{U_i}\to A_{U_i}/(f_i),
$ and 
$A_{U_i}\to A_{U_i}/(g_i)$
are isomorphisms in degrees \(\le 3\). Hence
$
\widetilde{\psi}_i\circ \widetilde{\varphi}_i
$ and
$
\widetilde{\varphi}_i\circ \widetilde{\psi}_i
$
induce the identity on the degree-\(m\) pieces for every \(m\le 3\).
Since \(A_{U_i}\) is generated by its degree-\(1\) and degree-\(3\) parts, this shows that
$ \widetilde{\varphi}_i$ is a graded automorphism of \(A_{U_i}\).

Taking \(\Proj\), we obtain an automorphism
\[
\gamma_i\in \Aut_{U_i}\!\bigl(\P_{U_i}(1,1,1,3)\bigr)=G(U_i).
\]
By construction, the diagram of graded algebras
\[
\begin{tikzcd}
A_{U_i} \arrow[r,"q_{\beta,i}"] \arrow[d,"\widetilde{\varphi}_i"'] &
\mathcal R_\beta|_{U_i} \arrow[d,"\Phi_i"] \\
A_{U_i} \arrow[r,"q_{T,i}"'] &
\mathcal R_T|_{U_i}
\end{tikzcd}
\]
commutes. Passing to \(\Proj\), this means precisely that
$
i_\beta\circ \alpha
=
\gamma_i\circ i_T
$
as morphisms
\[
\mathcal X_T|_{U_i}\rightarrow \P_{U_i}(1,1,1,3).
\]
Since the linearization is unique up to $\Gamma (\cO^{\times})$, $\gamma_i$ glue on overlaps $U_i \cap U_j$, and we obtain the desired element
$\gamma_\alpha\in G(T).$
This construction is functorial in \(T\), and it is inverse to the previously defined map
$
G\rightarrow B_2\times_{\mathcal U_2}S.
$
Note that $G=\Aut_{\P_S (1,1,1,3)/S} \simeq \Aut_{\P(1,1,1,3)/k} \times_k S$ is smooth over $S$ (see \cite{LiendoLucchiniArteche}*{Theorem 2} for example).
Therefore,
$\mu \colon B_{2}\rightarrow \mathcal U_2$
is smooth as desired.
The representability follows from the same argument as in \cite{Rizov}*{Theorem~4.3.3}.
\end{proof}

Let $o \in \Spec S_{D}$ be the origin.
Then the natural map $ \Spec S_{D} \setminus o \rightarrow \P(S_d)$ is a smooth surjection.
Let $B'_{2D} \subset \Spec S_{D} \setminus o$ be the pullback of $B_{2D} \subset \P(S_d)$, which has a smooth surjection $\mu'$ onto $\mathcal{U}_{2D} \subset \mathcal{F}_{2D}$ by \cref{prop:smoothsurjection}.

\begin{definition}
Let $h \in \{1,2, \ldots, 11\}$.
We wet
\[
B_{2D,h}' := \mu'^{-1} (\mathcal{F}_{2D,h}) \subset B'_{2D}.
\]
\end{definition}

By the smoothness of $\mu'$, Ogus's theorem \cref{thm:Ogus} implies the following:

\begin{proposition}\label{Ogus-B'}
Let $h \in \{1,2,\ldots, 10,11\}.$
\begin{enumerate}
\item 
The closed substack $B'_{2D,h} \subset B'$ is equi-dimensional locally complete intersection of codimension $h-1$.
\item
Let $b \in B'_{2D,h} (k)$ be a point.
Then the following are equivalent.
\begin{enumerate}
    \item 
The scheme $B'_{2D, h}$ is singular at $b$.
\item 
The K3 surface $\mathcal{X}_{b}$ is supersingular and $\tau(\mathcal{X}_b, \cO_{W} (1)|_\mathcal{X_b}) <h.$
\end{enumerate}
\end{enumerate}
In particular, $B'_{2D,h}$ is reduced if $1 \leq h \leq 10$, and generically non-reduced if $h=11.$
\end{proposition}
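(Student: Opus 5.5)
The plan is to pull back \cref{thm:Ogus} along the composite
\[
\nu\colon B'_{2D}\rightarrow B_{2D}\xrightarrow{\ \mu\ }\mathcal F_{2D}.
\]
The key input is that $\nu$ is smooth and that its image lies in the open substack $\mathcal U_{2D}$. By definition $B'_{2D,h}=\nu^{-1}(\mathcal F_{2D,h})$ as a closed subscheme, so it equals the fiber product $B'_{2D}\times_{\mathcal F_{2D}}\mathcal F_{2D,h}$. Every property in the statement (lci, equidimensionality, codimension, singular locus, reducedness) is local in the smooth topology and is preserved or reflected appropriately by smooth base change. So I will transport each item of \cref{thm:Ogus} separately.

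First I check smoothness of $\nu$. The map $\Spec S_d\setminus o\to\P(S_d)$ is a $\mathbb G_m$-torsor, so it is smooth and surjective. The morphism $\mu$ is smooth and representable by \cref{prop:smoothsurjection}, and composites of smooth morphisms are smooth. Since $\nu$ is representable, I can choose an étale (or smooth) atlas $U\to\mathcal F_{2D}$. Then $B'_{2D}\times_{\mathcal F_{2D}}U\to U$ is a smooth morphism of schemes, and all statements may be checked on this fiber product. On this cover $B'_{2D}\times U$ is étale-locally equal to $B'_{2D}$, or more precisely smooth over $B'_{2D}$.

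For (1), I use that a smooth base change of an lci closed immersion of pure codimension $c$ is again an lci closed immersion of pure codimension $c$. The Koszul-regular local generators pull back to a regular sequence because smooth morphisms are flat. By \cref{thm:Ogus}(1), $\mathcal F_{2D,h}$ is lci of pure dimension $20-h$ inside the smooth $19$-dimensional... I must be careful here. $\mathcal F_{2D}$ in characteristic $p$ has dimension $19$, so the codimension is $h-1$ when computed in $\mathcal F_{2D}$. Pure codimension $h-1$ then passes to $B'_{2D}$, and $B'_{2D}$ is smooth (an open subscheme of affine space), so $B'_{2D,h}$ is equidimensional of codimension $h-1$. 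Whether $B'_{2D,h}$ is nonempty is irrelevant to the statement.

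For (2), I use that if $Y\to\mathcal Y$ is smooth and $\mathcal Z\subset\mathcal Y$ is closed, then $Y\times_{\mathcal Y}\mathcal Z\to\mathcal Z$ is smooth. A point $b$ is then a regular point of the pullback if and only if its image $\nu(b)$ is a regular point of $\mathcal Z$: regularity ascends and descends along flat morphisms with regular fibers. Applying \cref{thm:Ogus}(2) at $\nu(b)=(\mathcal X_b,\cO_W(1)|_{\mathcal X_b})$ gives the equivalence of (a) and (b). For the final sentence, reducedness is also smooth-local (R0 and S1 are preserved), and generic non-reducedness for $h=11$ follows because $\nu$ is open and flat. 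Every generic point of $B'_{2D,11}$ maps to a generic point of a component of $\mathcal F_{2D,11}$, by pure dimension together with flatness, where the local ring is non-reduced. A local ring that is flat over a non-reduced local ring is non-reduced.

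The main obstacle I expect is bookkeeping rather than mathematics. One issue is confirming that the scheme-theoretic preimage used to define $B'_{2D,h}$ matches the substack structure in \cref{thm:Ogus}. The other is that regularity, lci, and codimension statements for Deligne–Mumford stacks are defined via atlases, so the descent along $\nu$ must be phrased through a common smooth cover.
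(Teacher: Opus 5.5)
Your proposal is correct and is the paper's own argument. The paper deduces the proposition in one line from \cref{thm:Ogus}, using the smoothness of $\mu'\colon B'_{2D}\to\mathcal U_{2D}\subset\mathcal F_{2D}$. You have simply written out the standard smooth-descent facts that this line relies on: flat base change of regular immersions with codimension preserved, ascent and descent of regularity along smooth maps, and preservation of reducedness and of generic non-reducedness.
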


\subsection{Artin invariant versus non-splitting index}
Following Notation~\ref{notation:AI-vs-NS}, we set \(S_d = k[s_1, \ldots, s_m]\) in the following.
\begin{proposition}\label{defining-equation-ht}
In the setting of \cref{notation:AI-vs-NS}, we assume $N=3$ and 
\[
(q_0,q_1,q_2,q_3,d)=(1,1,1,1,2)\quad  \text{or}\quad (1,1,1,3,1). 
\]
Then $B'_{2D,h+1}=\bigcap_{i=1}^h V(H_i(s,0)) \cap B'_{2D}$  for every $1 \leq h \leq 9$.
\end{proposition}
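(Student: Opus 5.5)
We argue by induction on $h$, comparing the two closed subschemes $B'_{2D,h+1}$ and $Z'_h:=\bigcap_{i=1}^h V(H_i(s,0))\cap B'_{2D}$ of $B'_{2D}$. Both are reduced: the former by \cref{Ogus-B'} (since $h+1\le 10$), and the latter will be shown to be reduced along the way. The first task is to check that they have the same $k$-points. For $b\in B'_{2D}(k)$, the surface $X=\mathcal X_b$ is a smooth quartic or weighted sextic. Combining Yobuko's theorem with the equality $\sht(\Proj(A/G(b)))=\sht(A/G(b))$ from Convention~\ref{conv:polynomial} (the hypotheses hold here, since $\mathcal O(-d)$ is invertible and $X$ is normal), we get $\sht(X)=\sht(A/G(b))$. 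By the second formula in \cref{variant-fedder}, $\sht(A/G(b))\ge h+1$ if and only if $G(b)^{p-1}(G(b)^{p(p-2)}\Delta(G(b)))^{1+\cdots+p^{n-2}}\in\m^{[p^n]}$ for all $n\le h$. This element equals $G(b)G_n(b,0)$, since
\[
p-1+p(p-2)(1+\cdots+p^{n-2})=p^n-p^{n-1}-\cdots-p-1 .
\]
It is homogeneous of degree $(p^n-1)d$, so it lies in $\m^{[p^n]}$ exactly when its coefficient on $x^{p^n-1}$, namely $H_n(b,0)$, vanishes. Hence $Z'_h(k)=B'_{2D,h+1}(k)$ for every $h$.

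Next we upgrade this to an equality of schemes. For $h=1$, $V(H_1)$ is a hypersurface in the smooth variety $B'_{2D}$, and $B'_{2D,2}$ is a reduced divisor with the same support, so it suffices to show that $V(H_1)\cap B'_{2D}$ is generically reduced. For general $h$, assume $Z'_{h-1}=B'_{2D,h}$. Then $Z'_h$ is cut out from $B'_{2D,h}$ by the single equation $H_h$, while $B'_{2D,h+1}$ is reduced of pure codimension $h$ in $B'_{2D}$ by \cref{Ogus-B'}. Since $Z'_h$ is defined by $h$ equations with support of codimension $h$, it is a complete intersection and in particular Cohen--Macaulay. It is therefore reduced as soon as it is generically reduced, i.e.\ as soon as it is smooth at one point of each irreducible component. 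At a point $b$ where the Jacobian $\cL_h(b,0)$ has rank $h$, the scheme $Z'_h$ is smooth of codimension $h$. So everything reduces to showing that on each component of $B'_{2D,h+1}$ there is a point with $\rank\cL_h(b,0)=h$.

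This last step is the main obstacle. The plan is to take a general point $b$ of a component $C$ of $B'_{2D,h+1}$. The locus of points with $\sht\ge h+2$ has smaller dimension, so we may assume $\sht(\mathcal X_b)=h+1$. Then \cref{Ogus-B'}(2) says $B'_{2D,h+1}$ is smooth at $b$. Now $\sht(A/G(b))=h+1\ge h$ and $\ns(G(b))=\infty\ge h+1$: indeed $A/G(b)$ is quasi-$F$-split, so $\ns=\infty$ by \cref{finite-ns-to-non-qfs}. Therefore \cref{rank-chara-ns}(2) gives $\rank\cG_h(b,0)=h$. Since $H_1(b,0)=\cdots=H_{h-1}(b,0)=0$, \cref{relation-M-L} gives $\cG_h(b,0)=-\cL_h(b,0)$, so $\rank\cL_h(b,0)=h$. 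Hence $Z'_h$ is smooth of codimension $h$ at $b$, and we conclude $Z'_h=B'_{2D,h+1}$.

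The delicate points are twofold. First, one must justify that the supersingular locus, and more generally the locus $\sht\ge h+2$, has strictly smaller dimension, so that no component of $B'_{2D,h+1}$ lies inside it; this follows from the pure dimensions given in \cref{Ogus-B'}(1). Second, one must check that \cref{relation-M-L}, which is stated with the ideal $(H_1,H_{n-1})$, suffices here; this is fine because all $H_i(b,0)$ with $i<h$ vanish at $b$.
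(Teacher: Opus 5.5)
Your proposal is correct and follows essentially the paper's own argument. You match $k$-points via Yobuko and the Fedder-type criterion, observe that $\bigcap_{i\le h}V(H_i(s,0))\cap B'_{2D}$ is a complete intersection of pure codimension $h$ (hence Cohen--Macaulay), and show it is regular at the dense set of points with $\sht=h+1$, where $\cL_h(b,0)$ has full rank. The differences are cosmetic: the paper gets full rank by citing \cref{tau-vs-ns}, which is exactly your combination of \cref{rank-chara-ns}(2) with \cref{relation-M-L}, and your inductive framing is never actually used and can be dropped.
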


\begin{proof}
We set $C_h:=\bigcap_{i=1}^h V(H_i (s,0)) \cap B'_{2D}$.
By \cite{kty}*{Theorem~C} and \cite{Yobuko}*{Theorem~4.5}, we have $B'_{2D,h+1}(k)=C_h(k)$.
Since $B'_{2D,h+1}$ is reduced by \cref{Ogus-B'}, it is enough to show that $C_h$ is reduced.
Since $B'_{2D,h+1}(k)=C_h(k)$ and $B'_{2D,h+1}$ is equi-dimensional of codimension $h$, so is $C_h$.
Since $C_h$ is defined by $h$ equations, it is locally complete intersection and $H_1 (s,0),\ldots,H_h (s,0)$ is a regular sequence on $B'$.
In particular, $Z_h \cap B'$ is the singular locus of $C_h$. 
Now, it is enough to show that the regular locus of $C_h$ is dense.
We take $b \in B'(k)$ such that $\sht(X_b)=h+1$, then $b \in C_h$.
By \cref{ns-vs-qfs} and \cref{tau-vs-ns}, we have $\tau(b)=\ns(G(b))=\infty$, thus $b \notin Z_h$.
Since $Z_h \cap B'$ is the singular locus of $C_h$, the point $b$ is contained in the regular locus of $C_h$.
On the other hand, by \cref{Ogus-B'} (1), the $\sht=h+1$ losus is dense in $B'_{2D,h+1}$, thus so in $C_h$.
Therefore, the regular locus of $C_h$ is dense, as desired.
\end{proof}

\begin{theorem}\label{ns-vs-K3-tau}
Let $k$ be an algebraically closed field of characteristic $p>0$.
Let $(X,L)$ be a polarized K3 surface over $k$ such that either $X$ is a quartic hypersurface in $\P^3_k$ with $L=\cO_X(1)$ or $X$ is a sextic hypersurface in $\P_k(1,1,1,3)$ with $L=\cO_X(1)$.
Let $f \in k[x_0,\ldots,x_3]$ be a defining equation of $X$.
Then we have
\[
\tau(X,L)=
\begin{cases}
\ns(f) & \text{if $\ns(f)\le 9$},\\
10 & \text{if $10 \le \ns (f) < \infty$}, \\
\infty & \text{if $\ns (f) = \infty$.}
\end{cases}
\]
\end{theorem}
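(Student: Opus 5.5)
We reduce to a point $b\in B'_{2D}(k)$ with $G(b)=f$; the defining equation is well defined up to a scalar, and $\ns$ is invariant under scaling $f$. Everything then becomes a comparison of $\tau(b)$ of Notation~\ref{notation:AI-vs-NS} with Ogus's invariant $\tau(X,L)$. By \cref{tau-vs-ns}, $\ns(f)=\tau(b)$, so it suffices to prove the statement with $\ns(f)$ replaced by $\tau(b)$.

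\textbf{The case $\ns(f)=\infty$.} By \cref{ns-vs-qfs}, $\ns(f)=\infty$ if and only if $A/fA$ is quasi-$F$-split. By \cite{KTY2} (see Convention~\ref{conv:polynomial}) and Yobuko's theorem, this holds if and only if $\sht(X)<\infty$, i.e. $X$ is not supersingular, which is exactly the condition $\tau(X,L)=\infty$. So from now on we assume $X$ is supersingular. Then $b\in C_h:=\bigcap_{i\le h}V(H_i(s,0))\cap B'_{2D}$ for every $h$, and by \cref{defining-equation-ht} we have $C_h=B'_{2D,h+1}$ for $1\le h\le 9$.

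\textbf{The key step: singular loci.} We compare singular loci. As in the proof of \cref{defining-equation-ht}, $C_h$ is cut out by the regular sequence $H_1(s,0),\dots,H_h(s,0)$ in the smooth scheme $B'_{2D}$. By the Jacobian criterion, $C_h$ is singular at $b$ exactly when the Jacobian $\mathcal{L}_h(b,0)$ has rank $\le h-1$, i.e. when $b\in Z_h$. On the other hand, by \cref{Ogus-B'}(2) applied with $h+1$, $B'_{2D,h+1}$ is singular at $b$ if and only if $\tau(X,L)<h+1$, i.e. $\tau(X,L)\le h$. Since $C_h=B'_{2D,h+1}$ as schemes, this gives
\[
b\in Z_h \iff \tau(X,L)\le h \qquad (1\le h\le 9).
\]

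\textbf{Conclusion.} From the equivalence, $\tau(b)=\min\{h\mid b\in Z_h\}$ agrees with $\tau(X,L)$ whenever either quantity is $\le 9$. Here we note that $b\in Z_h$ is a closed condition that depends only on the first $h$ equations, and that $\tau(b)$ is defined as the minimal such $h$. If $\tau(X,L)\ge 10$, then $\tau(X,L)=10$, since $\tau\le\sigma\le 10$. In that case $b\notin Z_h$ for all $h\le 9$, so $\tau(b)\ge 10$, and $\tau(b)<\infty$ by the previous step. Conversely, if $10\le\ns(f)<\infty$, then $X$ is supersingular and $b\notin Z_h$ for $h\le 9$, so $\tau(X,L)\ge 10$ and hence $\tau(X,L)=10$.

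\textbf{Main obstacle.} The delicate point is the scheme-theoretic equality $C_h=B'_{2D,h+1}$, which is \cref{defining-equation-ht}, together with correctly invoking the Jacobian criterion. The criterion needs $H_1,\dots,H_h$ to be the actual generators of the lci ideal near $b$, and it needs the rank of the $s$-Jacobian to be computed on the ambient smooth space $B'_{2D}$, which is open in affine space. Once that equality is granted, the rest is bookkeeping.
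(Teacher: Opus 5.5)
Your proof is correct and follows essentially the same route as the paper: you reduce to the supersingular case via \cref{ns-vs-qfs}, identify $Z_h$ with the singular locus of $C_h=B'_{2D,h+1}$ through the Jacobian criterion (as in the proof of \cref{defining-equation-ht}), and then combine \cref{Ogus-B'}(2) with \cref{tau-vs-ns}. The one point that you and the paper both leave implicit is $\tau(X,L)\ge 1$, which you need when $\tau(X,L)\le 9$; it follows from \cref{Ogus-B'}(2) with $h=1$, since $B'_{2D,1}=B'_{2D}$ is smooth.
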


\begin{proof}
By \cref{ns-vs-qfs}, we may assume that $X$ is supersingular.
We take $b \in B'_{2D}$ corresponding to $(X,L)$.
Note that, by the proof of \cref{defining-equation-ht}, for $1\leq h \leq 9$, $Z_h$ is the singular locus of $B'_{2d, h+1} (k).$ 
Therefore, by \cref{tau-vs-ns,defining-equation-ht,Ogus-B'}, if $\tau(X,L) \leq 9$ or $\ns(f) \leq 9$, then we have $\tau(X,L)=\ns(f)$.
Thus, we obtain the desired result.
\end{proof}

The following corollary generalizes \cite[Theorem 1.1]{Bhatt-Singh} (cf.\ \cite[Theorem 1]{OgusHasse} and \cite[Proposition 2.4]{ItoBrauer}) in the case of K3 surfaces.
\begin{corollary}
\label{cor:K3Artininvariant1}
In the situation of \cref{ns-vs-K3-tau}, 
we have $\tau (X,L) =1$ if and only if
$f^{p-2} \in \m^{[p]}.$
Moreover, for smooth quartic surfaces (resp.\ sextic surfaces), this is equivalent to $\sigma (X) =1$ if $p \geq 3$ (resp. for any $p$.)
\end{corollary}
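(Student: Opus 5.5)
By \cref{ns-vs-K3-tau}, we have $\tau(X,L)=1$ if and only if $\ns(f)=1$. So the first assertion reduces to computing $\ns(f)=1$ directly from \cref{Fedder-ns}. By definition $I^{\ns}_1(f)=f^{p-2}A$, so $\ns(f)=1$ if and only if $f^{p-2}A\subseteq \m^{[p]}$, i.e.\ if and only if $f^{p-2}\in\m^{[p]}$. Here we work in the graded/localized setting of Convention~\ref{conv:polynomial}: membership in the Frobenius power $\m^{[p]}$ for homogeneous elements can be checked equivalently in $k[x_0,\ldots,x_3]$ or in its localization at $\m$, since $\m^{[p]}$ is $\m$-primary. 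Note that the case $\ns(f)=\infty$ (non-supersingular $X$) causes no trouble, because then $\tau=\infty\neq 1$ and also $f^{p-2}\notin\m^{[p]}$. This settles the first equivalence for both quartic and weighted sextic hypersurfaces.

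For the second assertion we must compare $\tau(X,L)$ with $\sigma(X)$ for supersingular $X$. By Definition~\ref{defn:artin-invariant}, $\tau(X,L)\in\{\sigma(X),\sigma(X)-1\}$. Moreover, by Remark~\ref{remark:tau}(2), $\tau(X,L)=\sigma(X)$ whenever $p\nmid D$.

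In the sextic case $D=1$, so $p\nmid D$ for every $p$, and hence $\tau=\sigma$ always. In the quartic case $D=2$, so $p\nmid D$ exactly when $p\ge3$, which again gives $\tau=\sigma$. Combining with the first part, $\sigma(X)=1$ if and only if $f^{p-2}\in\m^{[p]}$ in these cases. Finally, if $X$ is not supersingular, then $\sigma(X)=\infty\neq1$ and $\tau(X,L)=\infty$, so both sides of each equivalence are false and the statement still holds.

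The only step needing care is confirming that Remark~\ref{remark:tau}(2) applies. That remark rests on $E_{\sigma(X)}$ being totally isotropic for $Q$, while $Q(L_{\dR})=D \bmod p$ is nonzero when $p\nmid D$; hence $L_{\dR}\notin E_{\sigma(X)}$. This is already stated in the paper, so no real obstacle remains.
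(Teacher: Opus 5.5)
Your argument is correct and is exactly the reasoning the paper leaves implicit, since it states the corollary without a separate proof. \cref{ns-vs-K3-tau} reduces $\tau(X,L)=1$ to $\ns(f)=1$; the $n=1$ case of \cref{Fedder-ns}, with $I^{\ns}_1(f)=f^{p-2}A$, turns this into $f^{p-2}\in\m^{[p]}$; and \cref{remark:tau}(2) gives $\tau(X,L)=\sigma(X)$, using $D=2$ with $p\ge 3$ in the quartic case and $D=1$ with arbitrary $p$ in the sextic case.
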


\begin{remark}\label{ns-for-RDP-K3}
Let
\[
f = x^4 + y^4 + z^4 + w^4+ x^2y^2 + x^2z^2 + y^2z^2+ xyz(x + y + z) \in A=\overline{\F}_2[x,y,z,w].
\]
Since $\Delta(f) \in \m^{[4]}$, we have $\ns(f)=2$ by \Cref{explicit-formula}.
On the other hand, the Artin invariant of the minimal resolution of $\Proj (A/fA)$ is equal to \(1\) by \cite{DK03}*{Theorem~1.1(vii)}.
Thus, \Cref{ns-vs-K3-tau} does not admit a straightforward analogue for
rational double point K3 surfaces.
\end{remark}

\section{Non-splitting index in mixed characteristic}
\label{section:mixed}

\subsection{Definition and properties of non-splitting index}

We define the non-splitting index by using the notion of splitting-order sequence defined in \cite{Yoshikawa25-cri}.
We note that the splitting-order sequence does not depend on the choice of Frobenius lifts by \cite{Yoshikawa25-cri}*{Theorem~4.9}.

\begin{definition}
Let $(\wt{A},\m)$ be a regular local ring with $p \in \m$.
Let $\wt{f} \in \wt{A}$ be such that $\wt{f},p$ form a regular sequence.
Let $\bs(\wt{f})=(s_i)_{i \geq 0}$ be the splitting-order sequence of $\wt{f}$ defined in \cite{Yoshikawa25-cri}.
We define the \emph{non-splitting index} $\ns(\wt{f})$ of $\wt{f}$ by
\[
\mathrm{ns}(\wt{f}):=
\begin{cases}
\infty & \text{if $s_n \leq 1$ for all $n \geq 1$,} \\
\min\{n \geq 1 \mid s_n \geq 2\} & \text{otherwise.}
\end{cases}
\]
\end{definition}

\begin{proposition}\label{characterization}
Let $(\wt{A},\m)$ be a regular local ring with $p \in \m$ equipped with a finite Frobenius lift $\phi$.
Let $\wt{f} \in \wt{A}$ be such that $\wt{f},p$ form a regular sequence.
Let $\bs(\wt{f})=(s_i)_{i \geq 0}$ be the splitting-order sequence of $\wt{f}$.
Set
\[
\Delta(\wt{f}):=\frac{\wt{f}^p-\phi(\wt{f})}{p}.
\]
Then, for a positive integer $n$, the following conditions are equivalent:
\begin{enumerate}
\item We have $n=\ns(\wt{f})$.
\item We have $s_1=\cdots=s_{n-1}=1$ and $s_n \geq 2$.
\item We have $\wt{f}_i \notin \m^{[p^i]}$ and $\wt{f}\wt{f}_i \in \m^{[p^i]}$ for all $1 \leq i \leq n-1$, and
\[
\wt{f}_n \in \m^{[p^n]},
\]
where
\[
\wt{f}_m:=\wt{f}^{p-2}(\wt{f}^{p^2-2p}\Delta(\wt{f}))^{1+p+\cdots+p^{m-2}}
\]
for every $m \geq 1$.
\end{enumerate}
In particular, if $\ns(\wt{f})<\infty$, then $\wt{A}/\wt{f}$ is not quasi-$F$-split.
\end{proposition}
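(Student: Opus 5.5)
The equivalence (1)$\Leftrightarrow$(2) is formal. By definition, $\ns(\wt{f})=n$ means that $s_n\ge 2$ and $s_i\le 1$ for $1\le i\le n-1$. So to pass to (2) it suffices to show $s_i\ge 1$ for all $i\ge 1$ when $\wt{f},p$ is a regular sequence. I will extract this from the basic properties of the splitting-order sequence in \cite{Yoshikawa25-cri}: the sequence is non-decreasing in the appropriate sense and starts from $s_0$, so $s_i\ge s_0\ge 1$ for $i\ge1$. Conversely, (2) gives (1) directly.

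For (2)$\Leftrightarrow$(3) I will use the Fedder-type description of the splitting orders in \cite{Yoshikawa25-cri}. Given a Frobenius lift $\phi$, the integer $s_i$ is described in terms of the largest power of $\wt{f}$ which, multiplied against the iterated element $\wt{f}_i$ built from $\wt{f}^{p-1}$ and $\Delta(\wt{f})$, stays outside $\m^{[p^i]}$. The expected shape is the following. Write $\wt{f}_i=\wt{f}^{p-2}\cdot(\text{twisted }\Delta\text{-power})$. Then the condition $s_i\ge 1$ should correspond to the level-$i$ element with exponent $p-1$, namely $\wt{f}\wt{f}_i$, behaving as in \Cref{variant-fedder}. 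Likewise, $s_i\ge 2$ should correspond to dropping one power of $\wt{f}$, i.e.\ to $\wt{f}_i\in\m^{[p^i]}$, in the same way that $I^{\ns}_1=f^{p-2}A$ replaces $I_1=f^{p-1}A$ in \cref{Fedder-ns}. With this dictionary, $s_i=1$ amounts to $\wt{f}\wt{f}_i\in\m^{[p^i]}$ together with $\wt{f}_i\notin\m^{[p^i]}$, and $s_n\ge2$ amounts to $\wt{f}_n\in\m^{[p^n]}$. Running this over $i=1,\dots,n-1$ and $i=n$ gives exactly (3).

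The subtle point is that the characterization of $s_i$ in \cite{Yoshikawa25-cri} presumably holds only inductively, assuming $s_1=\dots=s_{i-1}=1$. This is analogous to how \cref{rank-chara-ns}(2) used non-$(n-1)$-quasi-$F$-splitness to discard the correction terms $\beta_j$. So I will prove the equivalence by induction on $i$, using the hypotheses at earlier levels to reduce the general Fedder ideal at level $i$ to the single element $\wt{f}_i$.

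The main obstacle is pinning down the precise translation between the splitting-order sequence and the membership conditions in $\m^{[p^i]}$. In particular, I need to check that the independence of the Frobenius lift (\cite{Yoshikawa25-cri}*{Theorem~4.9}) lets me use the given finite lift $\phi$ in $\Delta(\wt{f})$. I also need to check that the identity $\Delta(\wt{f}^{p-1})\equiv -\wt{f}^{p(p-2)}\Delta(\wt{f})$ (cf.\ Convention~\ref{conv:local}) holds modulo the appropriate ideal, which converts the $\wt{f}^{p-1}$ version into the product formula for $\wt{f}_m$.

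For the final assertion, if $\ns(\wt{f})<\infty$ then $s_n\ge 2$ for some $n$. Arguing as in \cref{finite-ns-to-non-qfs}, or via the criterion in \cite{Yoshikawa25-cri} that $\wt{A}/\wt{f}$ is quasi-$F$-split if and only if the splitting orders eventually equal $1$ and never jump, I conclude that $\wt{A}/\wt{f}$ is not quasi-$F$-split. Concretely, (3) gives $\wt{f}\wt{f}_i\in\m^{[p^i]}$ for $i<n$. The relation $\wt{f}\wt{f}_m\in \wt{f}_{n}\cdot(\cdots)^{p^n}+\dots$ then propagates membership to all $m\ge n$, since $\wt{f}_m$ is a multiple of $\wt{f}_n^{p^{m-n}}$ times elements of the ideal and Frobenius powers preserve bracket powers.
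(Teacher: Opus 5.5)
\textbf{The proposal has a genuine gap in (1)$\Rightarrow$(2), and the gap also undermines your argument for the final assertion.}

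Your claim that $s_i\ge s_0\ge 1$ for all $i\ge1$, whenever $\wt f,p$ is a regular sequence, is not a property of the splitting-order sequence. Entries $s_i$ with $i\ge 1$ can vanish. By \cite{Yoshikawa25-cri}*{Proposition~3.9}, a vanishing entry forces $\wt A/\wt f$ to be quasi-$F$-split. That case does occur, and the paper's proof has to rule it out explicitly. So the sequence is not monotone, and you must exclude $s_i=0$ for $i<n$ by using the hypothesis $s_n\ge 2$.

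The missing idea is the following chain.
\begin{itemize}
\item If $s_i=0$ for some $i$, then $\wt A/\wt f$ is quasi-$F$-split by \cite{Yoshikawa25-cri}*{Proposition~3.9}.
\item Hence $\ppt(\wt A/\wt f,p)>\frac{p-2}{p-1}$ by \cite{Yoshikawa25}*{Theorem~B}.
\item Hence $s_j\le 1$ for every $j\ge 1$ by \cite{Yoshikawa25-cri}*{Theorem~B}, which contradicts $s_n\ge2$.
\end{itemize}
Running the last two steps from the assumption that $\wt A/\wt f$ is quasi-$F$-split proves the final assertion at once. This is exactly how the paper proceeds.

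Your separate argument for the final assertion does not repair this.
\begin{itemize}
\item It starts from (3), which you only reach through (2), so it inherits the gap.
\item Granting (3), your algebra is fine: $\wt f\wt f_m$ is a multiple of $\wt f_n^{p^{m-n}}$ for $m\ge n$, so $\wt f\wt f_m\in\m^{[p^m]}$ for all $m$.
\item However, deducing non-quasi-$F$-splitness from this requires a criterion for quasi-$F$-splitting of $\wt A/\wt f$, in an arbitrary regular local ring, in terms of these single elements. You have not supplied one.
\item Even in characteristic $p$, the local criterion of Convention~\ref{conv:local} uses the ideals $I_n(f)$ with $\Ker(u)$-intersections. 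The single-element form of \Cref{variant-fedder} is special to the graded Calabi--Yau case.
\item The criterion you attribute to \cite{Yoshikawa25-cri} (``the splitting orders eventually equal $1$ and never jump'') is not what that paper provides.
\end{itemize}

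For (2)$\Leftrightarrow$(3), your plan agrees with the paper. The paper simply invokes \cite{Yoshikawa25-cri}*{Corollary~5.3}, noting that $\wt f_m=\wt f(1,\ldots,1,2)$ in its notation. Once that corollary is cited, your inductive reconstruction is unnecessary, as are your concerns about the Frobenius lift and $\Delta(\wt f^{p-1})$.
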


\begin{proof}
We first prove $(1) \Rightarrow (2)$.
Assume that $n=\ns(\wt{f})$.
Then we have $s_1,\dots,s_{n-1} \leq 1$.

Suppose that $s_i=0$ for some $1 \leq i \leq n-1$.
Then $\wt{A}/\wt{f}$ is quasi-$F$-split by \cite{Yoshikawa25-cri}*{Proposition~3.9}.
Hence, by \cite{Yoshikawa25}*{Theorem~B}, we have
\[
\ppt(\wt{A}/\wt{f},p)>\frac{p-2}{p-1}.
\]
Therefore, by \cite{Yoshikawa25-cri}*{Theorem~B}, we obtain $s_i \leq 1$ for every $i \geq 1$.
This contradicts the assumption that $\ns(\wt{f})<\infty$.
Thus, we must have
\[
s_1=\cdots=s_{n-1}=1.
\]
Since $n=\ns(\wt{f})$, it follows by definition that $s_n\geq 2$.
This proves $(1) \Rightarrow (2)$ and the final assertion.

The implication $(2) \Rightarrow (1)$ is immediate from the definition of $\ns(\wt{f})$.

The equivalence $(2) \Leftrightarrow (3)$ follows from
\cite{Yoshikawa25-cri}*{Corollary~5.3}. Here we note that
$\wt{f}_m= \wt{f}(1,\ldots,1,2)$ in the notation of \cite{Yoshikawa25-cri}*{Corollary~5.3}.
\end{proof}

\begin{proposition}\label{characterization-non-qfs}
We use the same notation in \Cref{characterization}.
Assume $\wt{A}/\wt{f}$ is not quasi-$F$-split.
Then we have
\[
\ns(\wt{f})=\inf\{n \mid \wt{f}_n \in \m^{[p^n]}\},
\]
where we set $\inf \emptyset:=\infty$.
\end{proposition}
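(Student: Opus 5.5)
We will deduce the statement from \Cref{characterization}, together with the fact that, when $\wt{A}/\wt{f}$ is not quasi-$F$-split, the first half of condition (3) there holds automatically for every index $i$. Set $N:=\inf\{n \mid \wt{f}_n \in \m^{[p^n]}\}$.

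\emph{Step 1: non-quasi-$F$-splitting controls the sequence.} Since $\wt{A}/\wt{f}$ is not quasi-$F$-split, \cite{Yoshikawa25-cri}*{Proposition~3.9} rules out $s_i=0$ for $i \ge 1$, so $s_i \ge 1$ for all $i\geq 1$. Using the characterization in \cite{Yoshikawa25-cri}*{Corollary~5.3} (as in the proof of (2)$\Leftrightarrow$(3)), we will extract the following. If $s_1=\cdots=s_{i-1}=1$, then $s_i\ge1$ translates into $\wt{f}\wt{f}_i \in \m^{[p^i]}$, while $s_i\ge 2$ translates into $\wt{f}_i\in\m^{[p^i]}$. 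The inclusion $\wt f\wt f_i\in\m^{[p^i]}$ should be the mixed-characteristic analogue of $f^{p-1}(f^{p(p-2)}\Delta(f))^{1+\cdots+p^{i-2}}\in \m^{[p^i]}$ from \Cref{variant-fedder}. Indeed, $\wt f\wt f_i=\wt f^{p-1}(\wt f^{p^2-2p}\Delta(\wt f))^{1+\cdots+p^{i-2}}$ is exactly the element detecting $i$-quasi-$F$-splitting in \cite{Yoshikawa25}. So we will alternatively cite \cite{Yoshikawa25}*{Theorem~A/B}: non-quasi-$F$-splitting gives $\wt f\wt f_i\in\m^{[p^i]}$ for all $i$.

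\emph{Step 2: compare the two infima.} Suppose first that $N<\infty$. For $i<N$ we have $\wt f_i\notin\m^{[p^i]}$ by minimality of $N$, and $\wt f\wt f_i\in\m^{[p^i]}$ by Step 1. Moreover $\wt f_N\in\m^{[p^N]}$. Thus condition (3) of \Cref{characterization} holds with $n=N$, so $\ns(\wt f)=N$.

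Suppose next that $N=\infty$. If $\ns(\wt f)=n<\infty$, then (1)$\Rightarrow$(3) of \Cref{characterization} gives $\wt f_n\in\m^{[p^n]}$, which is a contradiction. Hence $\ns(\wt f)=\infty$ as well.

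\emph{Main obstacle.} The delicate point is Step 1: justifying $\wt f\wt f_i\in\m^{[p^i]}$ for all $i$ from non-quasi-$F$-splitness alone. We would first try to read it off \cite{Yoshikawa25-cri}*{Corollary~5.3}, inductively in $i$. Given $s_1=\cdots=s_{i-1}=1$, the fact that $s_i\neq0$ (by Proposition~3.9 there) is equivalent to $\wt f\wt f_i\in\m^{[p^i]}$. This induction needs the hypothesis $s_j=1$ for $j<i$, and that hypothesis holds exactly for $i\le N$ in the argument above, since for $j<N$ we have $s_j\ge1$ and $\wt f_j\notin\m^{[p^j]}$, hence $s_j=1$. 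So the induction runs precisely over the range needed.
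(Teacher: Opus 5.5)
Your argument is correct and uses the same ingredients as the paper: \cite{Yoshikawa25-cri}*{Proposition~3.9} to force $s_i\ge 1$, \cite{Yoshikawa25-cri}*{Corollary~5.3} to translate $s_i=1$ versus $s_i\ge 2$ into membership of $\wt f\wt f_i$ and $\wt f_i$ in $\m^{[p^i]}$, and \Cref{characterization}. The only difference is that you split cases on whether $N$ is finite rather than on whether $\ns(\wt f)$ is finite, which is just the contrapositive arrangement of the paper's two cases. Step~1 should rest only on the inductive Corollary~5.3 argument in your last paragraph; the alternative appeal to \cite{Yoshikawa25}*{Theorem~A/B} for $\wt f\wt f_i\in\m^{[p^i]}$ for all $i$ is unnecessary and unverified in this general local setting, so drop it.
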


\begin{proof}
First, we assume $\ns(\wt{f})=\infty$.
Since $\wt{A}/\wt{f}$ is not quasi-$F$-split, by \cite{Yoshikawa25-cri}*{Proposition~3.9}, we have $s_i=1$ for every $i \geq 1$.
Thus, it implies that $\wt{f}_n \notin \m^{[p^n]}$. 

Next, we assume $n:=\ns(\wt{f})<\infty$.
By \Cref{characterization}, we have $\wt{f}_i \notin \m^{[p^i]}$ and $\wt{f}_n \in \m^{[p^n]}$, as desired.
\end{proof}

\begin{proposition}\label{explicit-n}
Let $(\wt{A},\m)$ be a regular local ring with $p \in \m$. 
Let $\wt{f} \in \wt{A}$ such that $\wt{f},p$ form a regular sequence.
Then $\wt{A}[T]/(T^{l}+\wt{f})$ is not quasi-$F$-split for $l \geq p^{\ns(\wt{f})}$.
\end{proposition}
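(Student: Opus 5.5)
We may assume $n:=\ns(\wt{f})<\infty$. By \Cref{characterization}, $\wt{A}/\wt{f}$ is then not quasi-$F$-split; more precisely, $s_1=\cdots=s_{n-1}=1$ and $s_n\ge 2$. Set $\wt{B}:=\wt{A}[T]_{(\m,T)}$ and $\wt{g}:=T^l+\wt{f}$. Since $\wt{g},p$ is a regular sequence in $\wt{B}$ and quasi-$F$-splitting of $\wt{A}[T]/(\wt{g})$ can be tested at the maximal ideal $(\m,T)$, it suffices to show that $\wt{B}/\wt{g}$ is not quasi-$F$-split. I will use the Fedder-type criterion in mixed characteristic from \cite{Yoshikawa25-cri}: non-quasi-$F$-splitting holds as soon as the splitting-order sequence of $\wt{g}$ has no entry $0$, i.e.\ (via \cite{Yoshikawa25-cri}*{Proposition~3.9}) $s_i(\wt{g})\ge 1$ for every $i\ge 1$. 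Equivalently, by \cite{Yoshikawa25-cri}*{Corollary~5.3}, every relevant expression $\wt{g}(\dots)$ built from $\wt{g}$ and $\Delta(\wt{g})$ lies in the appropriate Frobenius power of $\mathfrak n:=(\m,T)$. For the Frobenius lift on $\wt{B}$, I take $\phi(T)=T^p$ extending $\phi$ on $\wt{A}$.

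The key computation is $\Delta(\wt{g})$. Since $\phi(T^l)=T^{lp}$, we have
\[
\Delta(\wt{g})=\Delta(\wt{f})+\frac{(T^l+\wt{f})^p-T^{lp}-\wt{f}^p}{p}=\Delta(\wt{f})+\sum_{j=1}^{p-1}\tfrac1p\tbinom pj T^{lj}\wt{f}^{p-j}.
\]
The elements $\wt{g}_m=\wt{g}^{p-2}(\wt{g}^{p^2-2p}\Delta(\wt{g}))^{1+\cdots+p^{m-2}}$ and their siblings are then expanded in powers of $T$. Every monomial in this expansion either contains a power $T^{e}$ with $e\ge l\ge p^{n}$, or is $T$-free. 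Once $T^{e}$ with $e\ge p^m$ for $m\le n$ appears, the term already lies in $\mathfrak n^{[p^m]}$. For larger $m$ I apply the maps $u^{m-n}$ first, reducing to level $n$, and then use $T^{p^n}\in\mathfrak n^{[p^n]}$. The $T$-free part is exactly the corresponding expression for $\wt{f}$. For indices $i<n$, this part is handled by $s_i(\wt{f})=1$: the conditions $\wt{f}\wt{f}_i\in\m^{[p^i]}$ give the needed containment. For $i\ge n$, it is handled by $\wt{f}_n\in\m^{[p^n]}$ together with the decreasing behaviour proved in \Cref{finite-ns-to-non-qfs}, now transported to mixed characteristic.

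A cleaner route, which I would try first, is to exploit the exponent $p^n$ directly. In the Witt-vector description, the cover $T^{l}=-\wt{f}$ with $l\ge p^n$ makes the divisor $\operatorname{div}(\wt{f})$ on $\operatorname{Spec}\wt{A}$ acquire the coefficient $1-1/l\ge 1-1/p^n$ after pushing forward along the cyclic cover $\wt{B}/\wt{g}\to\wt{A}$. A splitting of $\wt{B}/\wt{g}$ then induces an $n$-level splitting of the pair $(\wt{A},(1-\tfrac{2}{p^n})\operatorname{div}\wt{f})$: take the degree-$0$ graded piece with respect to the $\mu_l$-grading, and use that $l-1\ge p^n-1$. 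This contradicts $s_n\ge 2$, since by \cite{Yoshikawa25-cri} $s_n\ge 2$ is precisely the obstruction to such a pair splitting at level $n$. Failure at level $n$ then forces failure at all higher levels, by the monotonicity of $s_i$.

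The main obstacle is making this descent rigorous in mixed characteristic. One must identify, via the splitting-order sequence, the non-splitting of the pair $(\wt{A},(1-2/p^n)\operatorname{div}\wt{f})$ with $s_n\ge2$, and verify that the graded-piece projection is compatible with the Frobenius lift and the maps $u$. If that proves awkward, I will fall back on the explicit expansion of $\Delta(\wt{g})$ described above.
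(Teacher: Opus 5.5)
\textbf{There is a gap in the step for levels $i>n$, and the missing link is that you never need those levels.}

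Your fallback computation already contains the paper's key observation. With $\phi(T)=T^p$ one has $\Delta(\wt g)\equiv\Delta(\wt f)$ modulo $T^l$. Hence $\wt g_i\equiv\wt f_i$ and $\wt g\,\wt g_i\equiv\wt f\,\wt f_i$ modulo $T^{p^n}$, and $T^{p^n}\in\mathfrak n^{[p^i]}$ for all $i\le n$.

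You aim this at the wrong target. You try to prove $s_i(\wt g)\ge 1$ for \emph{every} $i\ge1$. For $i>n$, the argument you sketch is not justified: ``apply $u^{m-n}$ first'' and ``the decreasing behaviour of \Cref{finite-ns-to-non-qfs}, transported to mixed characteristic'' are not established steps. At those levels $T^{p^n}\notin\mathfrak n^{[p^i]}$, so the $T$-terms no longer vanish. You also never say which elements must lie in $\mathfrak n^{[p^i]}$ once $s_n\ge 2$. The monotonicity in \Cref{finite-ns-to-non-qfs} is a positive-characteristic statement about the ideals $I^{\ns}_r$, and nothing in the paper gives a mixed-characteristic analogue of it. As written, this part is a gap.

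\textbf{The fix.} Verify condition (3) of \Cref{characterization} for $\wt g$ in $\wt B=\wt A[T]_{(\m,T)}$ at level $n$ only:
\begin{itemize}
\item for $1\le i\le n-1$, $\wt g\,\wt g_i\in\mathfrak n^{[p^i]}$;
\item for $1\le i\le n-1$, $\wt g_i\notin\mathfrak n^{[p^i]}$;
\item $\wt g_n\in\mathfrak n^{[p^n]}$.
\end{itemize}
The non-containments need $\mathfrak n^{[p^i]}\cap\wt A=\m^{[p^i]}$, which holds because $\wt B/\mathfrak n^{[p^i]}$ is free over $\wt A/\m^{[p^i]}$ with basis $1,T,\dots,T^{p^i-1}$. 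You did not record these non-containments, and they are required.

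This gives $\ns(\wt g)=n<\infty$. The final assertion of \Cref{characterization} then yields non-quasi-$F$-splitness directly. That assertion rests on the chain
\[
\text{quasi-$F$-split}\ \Rightarrow\ \ppt>\tfrac{p-2}{p-1}\ \Rightarrow\ s_i\le 1 \text{ for all } i.
\]
This chain is exactly the mixed-characteristic substitute for the monotonicity you were trying to transport, and this is the paper's proof.

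Your cyclic-cover ``cleaner route'' is unnecessary. It would also need a mixed-characteristic analogue of \Cref{Fedder-ns}, identifying $s_n\ge 2$ with failure of level-$n$ splitting of the pair $(\wt A,(1-\frac{2}{p^n})\operatorname{div}\wt f)$. The paper does not provide such an analogue.
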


\begin{proof}
Set $n:=\ns(\wt{f})$.
Fix an integer $l \geq p^n$ and set $g:=T^{l}+\wt{f}$.
Set $g_1:=g^{p-2}$, $\wt{f}_1:=\wt{f}^{p-2}$,
\[
g_i:=g^{p-2}(g^{p^2-p}\Delta(g))^{1+p+\cdots+p^{i-2}}, \quad \text{and} \quad \wt{f}_i:=\wt{f}^{p-2}(\wt{f}^{p^2-p}\Delta(\wt{f}))^{1+p+\cdots+p^{i-2}} 
\]
for $i \geq 2$.
Then $g_i \equiv f_i \pmod{(T^{p^n},p)}$.
By \Cref{characterization}, we have $\ns(g)=\ns(\wt{f})=n< \infty$.
By \Cref{characterization}, the ring $A[T]/(T^{l}+\wt{f})$ is not quasi-$F$-split, as desired.
\end{proof}


\subsection{Comparison of non-splitting indices in positive and mixed characteristic}

In this subsection, we use the notation introduced in \cref{notation:AI-vs-NS}.

\begin{lemma}\label{decomposition-G_n-s-t}
For $n\ge 1$ and $1 \leq i \leq m$, we have the following decomposition:
\[
G_{n,i}(s,t)
=
G_{n,i}(s,0)
-
\sum_{j=1}^{n-1}
M_j(s,t)^{p^{n-j}}G_{n-j,i}(s,t).
\]
\end{lemma}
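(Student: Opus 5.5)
The plan is to derive a polynomial identity for $G_n(s,t)$ itself, before taking any coefficients, and then read off the coefficient of $x^{p^n-1-\alpha_i}$. The identity I aim for is
\[
G_n(s,t)=G_n(s,0)-\sum_{j=1}^{n-1}\bigl(G_j(s,0)G(t)\bigr)^{p^{n-j}}\,G_{n-j}(s,t)
\qquad\text{in } A[s,t].
\]
To get it, I use the relation $\Delta(G)(s,t)=\Delta(G)(s,0)-G(t)^p$ noted in Notation~\ref{notation:AI-vs-NS}. Set
\[
a:=G(s)^{p^2-2p}\Delta(G)(s,0),\qquad b:=G(s)^{p^2-2p}G(t)^p.
\]
Then, for $n\ge2$,
\[
G_n(s,t)=G(s)^{p-2}(a-b)^{1+p+\cdots+p^{n-2}}=G(s)^{p-2}\prod_{k=0}^{n-2}\bigl(a^{p^k}-b^{p^k}\bigr),
\]
where the second equality is the Frobenius in characteristic $p$. (The exponent bookkeeping agrees with the definition, since $(p-2)+(p^2-2p)(1+\cdots+p^{n-2})=p^n-p^{n-1}-\cdots-p-2$.)

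Next I expand the product by the telescoping formula
\[
\prod_{k=0}^{n-2}(A_k-B_k)=\prod_{k} A_k-\sum_{l=0}^{n-2}\Bigl(\prod_{k>l}A_k\Bigr)B_l\Bigl(\prod_{k<l}(A_k-B_k)\Bigr),
\]
with $A_k=a^{p^k}$ and $B_k=b^{p^k}$, and put $j:=n-1-l$. In the $l$-th term, the factor $G(s)^{p-2}\prod_{k<l}(a^{p^k}-b^{p^k})$ is exactly $G_{l+1}(s,t)=G_{n-j}(s,t)$. The remaining factor is
\[
b^{p^l}\prod_{k=l+1}^{n-2}a^{p^k}=G(s)^{(p-2)p^{l+1}}G(t)^{p^{l+1}}\prod_{k=l+1}^{n-2}a^{p^k},
\]
and this equals $\bigl(G_j(s,0)G(t)\bigr)^{p^{n-j}}$, because $G_j(s,0)^{p^{l+1}}=G(s)^{(p-2)p^{l+1}}\prod_{k=l+1}^{n-2}a^{p^k}$. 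The leading term $G(s)^{p-2}\prod_k a^{p^k}$ is $G_n(s,0)$. The case $n=1$ is trivial, since then the sum is empty and $G_1(s,t)=G(s)^{p-2}$ does not involve $t$.

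Finally I multiply the identity by $x^{\alpha_i}$ and compare coefficients of $x^{p^n-1}$, working modulo $\m^{[p^n]}=(x_0^{p^n},\ldots,x_N^{p^n})$. The element $G_j(s,0)G(t)$ is homogeneous in $x$ of degree $(p^j-1)d$, so $G_j(s,0)G(t)\equiv M_j(s,t)x^{p^j-1}\pmod{\m^{[p^j]}}$ (as noted in Notation~\ref{notation:AI-vs-NS}). Raising to the $p^{n-j}$-th power gives $M_j^{p^{n-j}}x^{p^n-p^{n-j}}$ modulo $\m^{[p^n]}$. The coefficient of $x^{p^n-1}$ in $x^{p^n-p^{n-j}}G_{n-j}(s,t)x^{\alpha_i}$ is the coefficient of $x^{p^{n-j}-1}$ in $G_{n-j}(s,t)x^{\alpha_i}$, namely $G_{n-j,i}(s,t)$. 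Likewise, the first term gives $G_{n,i}(s,0)$ and the left side gives $G_{n,i}(s,t)$. This yields the stated decomposition.

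The main obstacle is keeping the telescoping expansion organized so that each term factors exactly as (an earlier $G_j(s,0)$ times $G(t)$) raised to a $p$-power, times $G_{n-j}(s,t)$. The choice of putting the $a$-factors on the high-index side is what makes this work. The coefficient extraction is then routine homogeneity bookkeeping.
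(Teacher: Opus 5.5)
Your proposal is correct and is essentially the paper's argument. The paper also uses $\Delta(G)(s,t)=\Delta(G)(s,0)-G(t)^p$ to peel off the top factor, getting $-(G_1(s,0)G(t))^{p^{n-1}}G_{n-1}(s,t)$, and then repeats on the $\Delta(G)(s,0)$-part; your telescoping expansion of $\prod_k(a^{p^k}-b^{p^k})$ is that iteration in closed form, and it is more complete because it states the full identity $G_n(s,t)=G_n(s,0)-\sum_{j=1}^{n-1}(G_j(s,0)G(t))^{p^{n-j}}G_{n-j}(s,t)$ and justifies the coefficient extraction via $(G_j(s,0)G(t))^{p^{n-j}}\equiv M_j(s,t)^{p^{n-j}}x^{p^n-p^{n-j}}$ modulo $\m^{[p^n]}$, steps the paper leaves as ``repeating this procedure inductively.''
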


\begin{proof}
For $n=1$, the equality $G_{1,i}(s,t)=G_{1,i}(s,0)$ follows directly from the definition.

Let $n\ge 2$.
By definition, we have
\begin{align}\label{eq:1-G_n}
G_n(s,t)
&=(G(s)^{p-2})^{p^{n-1}}
(\Delta(G)(s,0)-G(t)^p)^{p^{n-2}}
G_{n-1}(s,t) \notag \\
&=(G(s)^{p^2-2p}\Delta(G)(s,0))^{p^{n-2}}
G_{n-1}(s,t) \notag \\
&\qquad -(G(s)^{p-2}G(t))^{p^{n-1}}
G_{n-1}(s,t). 
\end{align}

The the coeficient of $x^{p^n-1-\alpha_i}$ in second term in \eqref{eq:1-G_n} is
\[
-M_1(s,t)^{p^{n-1}}G_{n-1,i}(s,t)x^{p^n-1}.
\]
If $n=2$, the first term in \eqref{eq:1-G_n} is
\[
G(s)^{p^2-2p}\Delta(G)(s,0)G(s)^{p-2}
=
G_2(s,0).
\]
Therefore,
\[
G_{2,i}(s,t)=G_{2,i}(s,0) - M_1(s,t)^pG_{1,i}(s,t).
\]

Now assume $n\ge 3$.
The first term in \eqref{eq:1-G_n} can be written as
\begin{align*}
&(G(s)^{p^2-2p}\Delta(G)(s,0))^{p^{n-2}}
G_{n-1}(s,t) \\
&=(G(s)^{p^2-2p}\Delta(G)(s,0))^{p^{n-2}+p^{n-3}}
G_{n-2}(s,t) \\
&\quad - (G(s)^{p^2-2p}\Delta(G)(s,0)G(s)^{p-2}G(t))^{p^{n-2}}
G_{n-2}(s,t).
\end{align*}
The second term in the above expression satisfies
\begin{align*}
&-(G(s)^{p^2-2p}\Delta(G)(s,0)G(s)^{p-2}G(t))^{p^{n-2}}
G_{n-2}(s,t) \\
&=-(G_2(s,0)G(t))^{p^{n-2}}
G_{n-2}(s,t).
\end{align*}
The coefficient of $x^{p^n-1 - \alpha_i}$ in the above element is
\[
-M_2(s,t)^{p^{n-2}}G_{n-2,i}(s,t).
\]

Repeating this procedure inductively, we obtain
\[
G_{n,i}(s,t)
=
G_{n,i}(s,0)
-
\sum_{j=1}^{n-1}
M_j(s,t)^{p^{n-j}}G_{n-j,i}(s,t).
\]
\end{proof}

\begin{lemma}\label{decomposition-G_n}
We have the following decomposition:
\[
G_{n,i}(s,t)
=
\sum_{j=0}^{n-1}
K^{(n)}_j(M_1,\ldots,M_j)\cdot G_{n-j,i}(s,0),
\]
where $K^{(n)}_0=1$, and for $1\le j\le n-1$, $K^{(n)}_j$ is a polynomial in
$M_1(s,t),\ldots,M_j(s,t)$ that is independent of $i$, satisfying
\[
K^{(n)}_j(M_1,\ldots,M_j)
=
-M_j^{p^{n-j}}
+
(\text{a polynomial in } M_1,\ldots,M_{j-1}).
\]
\end{lemma}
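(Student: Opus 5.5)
We argue by strong induction on $n$, using \Cref{decomposition-G_n-s-t} as the only input. For $n=1$ there is nothing to prove: \Cref{decomposition-G_n-s-t} gives $G_{1,i}(s,t)=G_{1,i}(s,0)$, which is the claimed formula with $K^{(1)}_0=1$ and no further terms.

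Now let $n\ge 2$ and assume the decomposition for every $n'<n$: for each $1\le l\le n-1$,
\[
G_{l,i}(s,t)=\sum_{r=0}^{l-1}K^{(l)}_r(M_1,\ldots,M_r)\,G_{l-r,i}(s,0),
\]
with $K^{(l)}_0=1$ and each $K^{(l)}_r$ independent of $i$. Substituting this, with $l=n-j$, into the sum in \Cref{decomposition-G_n-s-t} gives
\[
G_{n,i}(s,t)=G_{n,i}(s,0)-\sum_{j=1}^{n-1}\sum_{r=0}^{n-j-1}M_j^{p^{n-j}}K^{(n-j)}_r(M_1,\ldots,M_r)\,G_{n-j-r,i}(s,0).
\]
We collect terms by $q:=j+r$, which runs over $1,\ldots,n-1$, and define
\[
K^{(n)}_0:=1,\qquad K^{(n)}_q:=-\sum_{j=1}^{q}M_j^{p^{n-j}}K^{(n-j)}_{q-j}(M_1,\ldots,M_{q-j})\quad(1\le q\le n-1).
\]
This gives the desired decomposition for $n$. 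The indices are legitimate because $q\le n-1$ forces $q-j\le n-j-1$, so each $K^{(n-j)}_{q-j}$ is already defined. Independence of $i$ is immediate, since neither the $M_j$ nor, by induction, the $K^{(n-j)}_{q-j}$ depend on $i$.

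It remains to check the leading-term shape, and only the bookkeeping of which $M$'s appear needs care. In the sum defining $K^{(n)}_q$:
\begin{itemize}
\item the summand $j=q$ equals $-M_q^{p^{n-q}}K^{(n-q)}_0=-M_q^{p^{n-q}}$;
\item every summand with $j<q$ involves only $M_j$ with $j\le q-1$, together with $K^{(n-j)}_{q-j}$, which is a polynomial in $M_1,\ldots,M_{q-j}$ with $q-j\le q-1$.
\end{itemize}
Hence $K^{(n)}_q=-M_q^{p^{n-q}}+(\text{polynomial in }M_1,\ldots,M_{q-1})$, as claimed.

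I expect no serious obstacle here. The one delicate point is that the induction must be strong, since \Cref{decomposition-G_n-s-t} expresses $G_{n,i}(s,t)$ through all $G_{n-j,i}(s,t)$ with $1\le j\le n-1$ at once, and the inductive hypothesis must include $i$-independence of the $K^{(l)}_r$.
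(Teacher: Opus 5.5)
Your proof is correct and is the same argument as the paper's: substitute \Cref{decomposition-G_n-s-t} recursively into its own right-hand side and collect the coefficient of each $G_{n-j,i}(s,0)$, with the leading term $-M_j^{p^{n-j}}$ coming from the summand $-M_j^{p^{n-j}}G_{n-j,i}(s,t)$. Your strong induction with the explicit recursion $K^{(n)}_q=-\sum_{j=1}^{q}M_j^{p^{n-j}}K^{(n-j)}_{q-j}(M_1,\ldots,M_{q-j})$ makes the paper's bookkeeping precise, including the check that the summands with $j<q$ involve only $M_1,\ldots,M_{q-1}$, which the paper leaves implicit.
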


\begin{proof}
By \cref{decomposition-G_n-s-t}, we have
\[
G_{n,i}(s,t)
=
G_{n,i}(s,0)
-
\sum_{j=1}^{n-1}
M_j(s,t)^{p^{n-j}}G_{n-j,i}(s,t).
\]
Applying \cref{decomposition-G_n-s-t} recursively to each $G_{n-j,i}(s,t)$
in the right-hand side, we can express $G_{n-j,i}(s,t)$ as a linear combination
of $G_{n-j-l,i}(s,0)$ with coefficients given by polynomials in
$M_1(s,t),\ldots,M_{n-j-1}(s,t)$.
Collecting the coefficients of each $G_{n-j,i}(s,0)$, we obtain the asserted
decomposition.
The description of the leading term of $K^{(n)}_j$ follows directly from the
first summand in the above expression.
\end{proof}

\begin{lemma}\label{choice-lambda_i-G}
Let $n \geq 2$.
Choose $b=(b_1,\ldots,b_m)\in k^m$ such that both $\mathcal{G}_n(b,0)$ and $\mathcal{G}_{n-1}(b,0)$ have rank $n-1$.
Then there exist $\lambda_1,\ldots,\lambda_{n-1}\in k$ such that
\[
\begin{pmatrix}
K^{(n)}_{n-1}(\lambda_1,\ldots,\lambda_{n-1}) &
\cdots &
K^{(n)}_{1}(\lambda_1) &
1
\end{pmatrix}
\cdot
\mathcal{G}_n(b,0)
=0.
\]
\end{lemma}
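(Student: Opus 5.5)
The argument is linear algebra followed by a triangular solve over the algebraically closed field $k$.

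\emph{Step 1: locate the left kernel.} Since $\mathcal{G}_{n-1}(b,0)$ is the submatrix of $\mathcal{G}_n(b,0)$ formed by its first $n-1$ rows, and both have rank $n-1$:
\begin{itemize}
\item the first $n-1$ rows are linearly independent;
\item the last row $(G_{n,1}(b,0),\ldots,G_{n,m}(b,0))$ is a $k$-linear combination of them.
\end{itemize}
Hence there are unique $\mu_1,\ldots,\mu_{n-1}\in k$ with
\[
\begin{pmatrix}\mu_{n-1} & \cdots & \mu_1 & 1\end{pmatrix}\cdot \mathcal{G}_n(b,0)=0.
\]
Here the coefficient $\mu_j$ multiplies the row of index $n-j$, which matches the ordering in the statement, where $K^{(n)}_j$ sits in the column corresponding to row $n-j$. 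The left kernel is one-dimensional, and its nonzero elements have nonzero last entry. So it suffices to find $\lambda_1,\ldots,\lambda_{n-1}\in k$ with $K^{(n)}_j(\lambda_1,\ldots,\lambda_j)=\mu_j$ for all $1\le j\le n-1$.

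\emph{Step 2: solve the triangular system.} Proceed by induction on $j$, using the shape of $K^{(n)}_j$ from \cref{decomposition-G_n}. That lemma gives
\[
K^{(n)}_j(\lambda_1,\ldots,\lambda_j)=-\lambda_j^{p^{n-j}}+P_j(\lambda_1,\ldots,\lambda_{j-1}),
\]
where $P_j$ is a polynomial (with coefficients in $\mathbb{F}_p$, since it comes from iterating \cref{decomposition-G_n-s-t}) that does not depend on $\lambda_j$.
\begin{itemize}
\item For $j=1$: choose $\lambda_1$ with $\lambda_1^{p^{n-1}}=-\mu_1$.
\item Having fixed $\lambda_1,\ldots,\lambda_{j-1}$: choose $\lambda_j$ with $\lambda_j^{p^{n-j}}=P_j(\lambda_1,\ldots,\lambda_{j-1})-\mu_j$.
\end{itemize}
Each such $p^{n-j}$-th root exists because $k$ is perfect, indeed algebraically closed. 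Substituting these $\lambda_j$ gives the desired vector.

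The main obstacle, which is mild, is checking that $K^{(n)}_j$ depends on $M_j$ only through the single leading term $-M_j^{p^{n-j}}$. This is what makes the system triangular, and it is exactly the content of \cref{decomposition-G_n}. Here the $K^{(n)}_j$ are treated as formal polynomials in independent variables, evaluated at arbitrary elements $\lambda_1,\ldots,\lambda_j\in k$.
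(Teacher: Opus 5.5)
Your proposal is correct and follows essentially the same route as the paper. Both arguments use the two rank conditions to produce a left-kernel vector $(c_{n-1},\ldots,c_1,1)$ of $\mathcal{G}_n(b,0)$, then solve $K^{(n)}_j(\lambda_1,\ldots,\lambda_j)=c_j$ inductively using the leading term $-M_j^{p^{n-j}}$ from \cref{decomposition-G_n}. The only cosmetic difference is how each equation is solved: you take explicit $p^{n-j}$-th roots, so perfectness of $k$ suffices, whereas the paper invokes that a non-constant one-variable polynomial has a root over the algebraically closed field $k$.
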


\begin{proof}
Since both $\mathcal{G}_n(b,0)$ and $\mathcal{G}_{n-1}(b,0)$  have rank $n-1$, there exist $c_1,\ldots,c_{n-1}\in k$ such that
\[
\begin{pmatrix}
c_{n-1} &
c_{n-2} &
\cdots &
c_1 &
1
\end{pmatrix}
\cdot
\mathcal{G}_n(b,0)
=0.
\]

We construct $\lambda_1,\ldots,\lambda_{n-1}$ inductively.
By \cref{decomposition-G_n}, the polynomial $K^{(n)}_1(T)-c_1\in k[T]$ is non-constant.
Since $k$ is algebraically closed, there exists $\lambda_1\in k$ such that
\[
K^{(n)}_1(\lambda_1)=c_1.
\]

Assume that $\lambda_1,\ldots,\lambda_{i-1}$ have been chosen for some $2\le j\le n-1$.
Again by \cref{decomposition-G_n}, the polynomial
\[
K^{(n)}_j(\lambda_1,\ldots,\lambda_{j-1},T)-c_j \in k[T]
\]
is non-constant.
Hence, since $k$ is algebraically closed, there exists $\lambda_j\in k$ such that
\[
K^{(n)}_j(\lambda_1,\ldots,\lambda_{i-1},\lambda_j)=c_j.
\]
Therefore, we obtain $\lambda_1,\ldots,\lambda_{n-1}\in k$ satisfying the desired relation.
\end{proof}

\begin{theorem}\label{compare-ns-positive-mixed}
For $b \in k^m$, we have
\[
\ns(G(b))=\min\{\ns(\wt{G}(b,c)) \mid c \in k^m\}.
\]  
\end{theorem}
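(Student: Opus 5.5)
The plan is to turn the mixed-characteristic criterion \Cref{characterization}(3) into conditions on the matrices $\mathcal{G}_n(b,c)$ and the scalars $H_n(b,c)$, and then compare these with their $t=0$ versions using \Cref{decomposition-G_n}.

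\textbf{Translation step.} The image of $\Delta(\wt{G}(b,c))$ in $A$ is $\Delta(G)(b,c)$. Hence the lift $\wt{f}_n$ attached to $\wt{f}=\wt{G}(b,c)$ reduces to $G_n(b,c)$, which is homogeneous of degree $(p^n-2)d$. I would argue, using homogeneity and the fact that $\wt{f}_n$ is a polynomial with $W(k)$-coefficients, that membership $\wt{f}_n\in\m^{[p^n]}$ can be checked after reduction modulo $p$. I expect this to be the main technical obstacle: it requires care with how $p\in\m$ enters $\m^{[p^n]}$, and I would first try a degree and monomial-support argument in $\wt{A}$. Granting it, $\wt{f}_n\in\m^{[p^n]}$ iff all coefficients of $x^{p^n-1-\alpha_i}$ in $G_n(b,c)$ vanish, i.e. the $n$-th row of $\mathcal{G}_n(b,c)$ is zero. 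Similarly, $\wt{f}\wt{f}_i\in\m^{[p^i]}$ iff $H_i(b,c)=0$.

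\textbf{Triangular relations.} Multiply \Cref{decomposition-G_n} by $G(b)$ and take the coefficient of $x^{p^i-1}$. This gives
\[
H_i(b,c)=\sum_{j} K^{(i)}_j(M_1,\ldots,M_j)\,H_{i-j}(b,0),
\]
which is unitriangular. Consequently $H_i(b,c)=0$ for all $i<n$ iff $H_i(b,0)=0$ for all $i<n$. By \Cref{variant-fedder}, the latter means $\sht(A/G(b))\ge n$. Likewise, row $n$ of $\mathcal{G}_n(b,c)$ is row $n$ of $\mathcal{G}_n(b,0)$ plus a $k$-linear combination of the earlier rows.

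\textbf{Inequality $\ns(G(b))\le \ns(\wt{G}(b,c))$.} Let $n=\ns(\wt{G}(b,c))<\infty$. By the translation and the triangular relation, $\sht(A/G(b))\ge n$. Row $n$ of $\mathcal{G}_n(b,c)$ vanishes, so $\rank\mathcal{G}_n(b,0)\le n-1$. By \Cref{rank-chara-ns}(2) this forces $\ns(G(b))\le n$. If $\ns(G(b))=\infty$ this already shows that every lift has infinite index.

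\textbf{Inequality $\ge$, via an optimal lift.} Let $n=\ns(G(b))<\infty$. By \Cref{ns-vs-qfs}, $A/G(b)$ is not quasi-$F$-split, so $H_i(b,0)=0$ for all $i$, and hence $H_i(b,c)=0$ for all $i$ and every $c$. By \Cref{rank-chara-ns}, $\rank\mathcal{G}_{n-1}(b,0)=n-1$ and $\rank\mathcal{G}_n(b,0)\le n-1$. If $n=1$, the first row of $\mathcal{G}_1(b,0)=\mathcal{G}_1(b,c)$ is zero, so any $c$ works. If $n\ge2$, \Cref{choice-lambda_i-G} gives $\lambda_1,\ldots,\lambda_{n-1}$. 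Since $M_j(b,c)$ is the $j$-th row of $\mathcal{G}_j(b,0)$ applied to $c$, and the first $n-1$ rows are independent, I can choose $c$ with $M_j(b,c)=\lambda_j$ for $j<n$. Then row $n$ of $\mathcal{G}_n(b,c)$ vanishes, so condition (3) fails at step $n$. Hence $\ns(\wt{G}(b,c))\le n$, using \Cref{characterization-non-qfs}; non-quasi-$F$-splitness of the lift holds because the $H_i(b,c)$ vanish. Combined with the first inequality, this gives equality for this $c$ and proves the theorem.
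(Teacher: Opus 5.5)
Your proof is correct. The optimal-lift half, $\min_c\ns(\wt G(b,c))\le\ns(G(b))$, matches the paper: you use \Cref{choice-lambda_i-G} and solve $M_j(b,c)=\lambda_j$ using the full row rank of $\mathcal{G}_{n-1}(b,0)$. Your proof of $\ns(G(b))\le\ns(\wt G(b,c))$, however, takes a genuinely different route.

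The paper splits into cases. If $A/G(b)$ is quasi-$F$-split, it imports \cite{Yoshikawa25}*{Proposition~4.6}. Otherwise it works with \eqref{eq:chara-ns}, which tacitly needs every lift to be non-quasi-$F$-split. Assuming $\ns(G(b))\ge n+1$, it then reruns the $\Ker(u)$-chain $a_1,\dots,a_{n-1}$ from the proof of \Cref{rank-chara-ns}. The key input there is $u(F_*(G(c)^pa))=G(c)\,u(F_*a)=0$ for $a\in\Ker(u)$, so replacing $\Delta(G)(b,0)$ by $\Delta(G)(b,c)$ is harmless.

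You instead start from a finite $n=\ns(\wt G(b,c))$ and apply \Cref{characterization} (1)$\Rightarrow$(3). You then transport both conditions to $t=0$ through the unitriangular relations of \Cref{decomposition-G_n}; the relation for $H_i$ follows by pairing with $b$, since $H_i=\sum_l s_lG_{i,l}$. You finish with \Cref{rank-chara-ns}(2).

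What your route buys: it handles the quasi-$F$-split case simultaneously and needs no mixed-characteristic input beyond \Cref{characterization}. What the paper's route buys: it is more conceptual, isolating that the $t$-dependence of $\Delta(G)$ dies on $\Ker(u)$. That is the same mechanism behind $T_c=T_0-c\lambda$ in \Cref{possible-ns}.

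Two small points.

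First, the translation step you flag as the main obstacle is not one. In the mixed setting $\m^{[p^n]}=(p,x_0^{p^n},\dots,x_N^{p^n})$, as in the paper's proof, so membership is decided mod $p$. By degree, every monomial of $G_n(b,c)$ outside $(x_0^{p^n},\dots,x_N^{p^n})$ has the form $x^{p^n-1-\alpha_i}$ with $\alpha_i\in\Lambda$.

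Second, at the end, your reason for non-quasi-$F$-splitness of the lift ("because the $H_i(b,c)$ vanish") is a mixed-characteristic height criterion that the paper never states. You do not need it. For $i<n$, row $i$ of $\mathcal{G}_i(b,c)$ is a unitriangular combination of the independent rows of $\mathcal{G}_{n-1}(b,0)$, hence nonzero. So condition (3) of \Cref{characterization} holds at $n$, and (3)$\Rightarrow$(1) gives $\ns(\wt G(b,c))=n$ directly, without invoking \Cref{characterization-non-qfs}.
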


\begin{proof}
First, we assume $A/(G(b))$ is quasi-$F$-split.
Then $\ns(G(b))=\infty$ by \Cref{finite-ns-to-non-qfs}.
Furthermore, $\wt{A}/(G(b,c))$ is quasi-$F$-split for every $c \in k^m$ by \cite{Yoshikawa25}*{Proposition~4.6}.
By \Cref{characterization}, we have $\ns(G(b,c))=\infty$.
Thus, we obtain the desired equality.

Next, we assume $A/(G(b))$ is not quasi-$F$-split.
Then, for $c \in k^m$, we have
\[
\ns(\wt{G}(b,c))=\inf\{l \mid \wt{G}(b,c)_l \in (p,x_0^{p^l},\ldots,x_N^{p^l})\},
\]
where
\[
\wt{G}(b,c)_l:=\wt{G}(b,c)^{p-2}(\wt{G}(b,c)^{p^2-2p}\Delta(\wt{G}(b,c)))^{1+p+\cdots+p^{l-2}}
\]
by \Cref{characterization-non-qfs}.
Since the image of $\Delta(\wt{G}(b,c))$ in $A$ is $\Delta(G)(b,c)$, we have
\begin{equation}\label{eq:chara-ns}
\ns(\wt{G}(b,c))=\inf\{l \mid G_l(b,c) \in \m^{[p^l]}\}.
\end{equation}

We further assume that $\ns(G(b)) \geq n+1$.
Then $I_n^{\ns}(G(b)) \notin \m^{[p]}$ by \Cref{Fedder-ns}.
By the proof of \Cref{rank-chara-ns}, there exists $a_1,\ldots,a_{n-1} \in \Ker(u)$ such that
\[
a_{i+1}=u(F_*(\Delta(G)(b,0)a_i)) \quad (1 \leq i \leq n-2), \quad u(F_*(\Delta(G)(b,0)a_{n-1})) \notin \m^{[p]}.
\]
Take $c \in k^m$.
Since $u(F_*\Delta(G)(b,0)a))=u(F_*\Delta(G)(b,c)a)$ for $a \in \Ker(u)$, we have
\[
G_n(b,c) \notin \m^{[p^n]}.
\]
In particular, it follows from \eqref{eq:chara-ns} that $\ns(\wt{G}(b,c)) \geq n+1$.
Therefore, we obtain 
\[
\ns(G(b)) \leq \min\{\ns(\wt{G}(b,c)) \mid c \in k^m\}.
\]

Next, we show the converse inequality. We may assume that $\ns(G(b))<\infty$.
By \cref{rank-chara-ns}, 
$n := \ns (G(b))$ satisfies that  the rank of $\mathcal{G}_i(b,0)$ is $i$ for $1 \leq i \leq n-1$ and the rank of $\cG_n(b,0)=n-1$.
We take $\lambda_1,\ldots,\lambda_{n-1} \in k$ as in \cref{choice-lambda_i-G}.
Since $\mathcal{G}_{n-1}(b,0)$ has rank $n-1$, there exists
$c=(c_1,\ldots,c_m)\in k^m$ such that
\[
\begin{pmatrix}
\lambda_1 \\
\lambda_2 \\
\vdots \\
\lambda_{n-1}
\end{pmatrix}
=
\mathcal{G}_{n-1}(b,0)
\cdot 
\begin{pmatrix}
c_1 \\
c_2 \\
\vdots \\
c_{m}
\end{pmatrix}.
\]
We note that the right-hand side is
\[
\begin{pmatrix}
M_1(b,c) \\
M_2(b,c) \\
\vdots \\
M_{n-1}(b,c)
\end{pmatrix}.
\]
By \cref{decomposition-G_n} and the choice of $\lambda_1,\ldots,\lambda_{n-1}$, we obtain
\begin{align*}
   G_{n,i}(b,c)
&=
\sum_{j=0}^{n-1}
K^{(n)}_j(M_1(b,c),\ldots,M_j(b,c))\cdot G_{n-j,i}(b,0) \\
&=\sum_{j=0}^{n-1}
K^{(n)}_j(\lambda_1,\ldots,\lambda_j)\cdot G_{n-j,i}(b,0) \\
&=0.
\end{align*}
Therefore, we have $G_n(b,c) \in \m^{[p^n]}$, and in particular, we obtain 
\[
\ns(\wt{G}(b,c)) \leq n = \ns(G(b))
\]
by \eqref{eq:chara-ns}, as desired.
\end{proof}

\begin{corollary}[cf.~\cite{TY26}]\label{AI-to-example}
Let $X:=(f=0) \subseteq \P^3$ be a quartic K3 surface over an alegbraic closed field in characteristic $p>0$.
If $X$ is supersingular with Artin invariant $n$, then then ring
\[
k[x,y,z,w,t]/(t^{p^n}+f)
\]
is not quasi-$F$-split.    
\end{corollary}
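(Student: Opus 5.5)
The plan is to chain the positive-characteristic comparison of \Cref{ns-vs-K3-tau} with the mixed-characteristic results \Cref{compare-ns-positive-mixed} and \Cref{explicit-n}.

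\textbf{Step 1: bound $\ns(f)$ by $n$.} Let $f\in A=k[x,y,z,w]$ define $X$, with $L=\cO_X(1)$. By \Cref{ns-vs-K3-tau}, the number $\tau(X,L)$ equals $\ns(f)$ when $\ns(f)\le 9$ and equals $10$ otherwise. By definition, $\tau(X,L)\in\{\sigma(X),\sigma(X)-1\}$, so $\tau(X,L)\le \sigma(X)=n$.
\begin{itemize}
\item If $\tau(X,L)\le 9$, then $\ns(f)=\tau(X,L)\le n$.
\item If $\tau(X,L)=10$, then $n=10$, and $\ns(f)$ lies in $\{10,11,\dots\}$ and is finite, since $X$ is supersingular (\Cref{ns-vs-qfs}, using that $\sht$ is the Artin--Mazur height).
\end{itemize}
The second case is the \emph{main obstacle}: we need $\ns(f)\le 10$, which \Cref{ns-vs-K3-tau} does not give directly. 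I would try the rank bound $\ns(f)\le \min\{r\mid \rank\mathcal{G}_r\le r-1\}$ from \Cref{rank-chara-ns}, combined with Ogus's dimension count. The locus $B'_{2D,11}$ is non-reduced of codimension $10$, so the Jacobian of $H_1,\dots,H_{10}$ should have rank $\le 9$ at supersingular points. Via \Cref{relation-M-L}, this gives $\rank\mathcal{G}_{10}(b,0)\le 9$, hence $\ns(f)\le 10$.

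\textbf{Step 2: pass to a lift.} Write $f=G(b)$ in the notation of \cref{notation:AI-vs-NS}. By \Cref{compare-ns-positive-mixed}, there is $c\in k^m$ such that the lift $\wt f:=\wt{G}(b,c)\in \wt A=W(k)[x,y,z,w]$ satisfies $\ns(\wt f)=\ns(f)\le n$. Work in the localization of $\wt A$ at $(p,x,y,z,w)$. There $\wt f,p$ is a regular sequence, and the Frobenius lift $\phi$ of \cref{notation:AI-vs-NS} is finite.

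\textbf{Step 3: apply \Cref{explicit-n}.} Since $p^n\ge p^{\ns(\wt f)}$, \Cref{explicit-n} shows that $\wt A[t]/(t^{p^n}+\wt f)$ is not quasi-$F$-split. This is a statement about the mixed-characteristic ring, so it remains to descend mod $p$.

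\textbf{Step 4: descend to characteristic $p$.} Use \cite{Yoshikawa25}*{Proposition~4.6}, already invoked in the proof of \Cref{compare-ns-positive-mixed}: if the special fiber $k[x,y,z,w,t]/(t^{p^n}+f)$ were quasi-$F$-split, then the lift would be quasi-$F$-split. That contradicts Step 3, so $k[x,y,z,w,t]/(t^{p^n}+f)$ is not quasi-$F$-split.

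Graded versus local versions of quasi-$F$-splitting agree by the usual homogeneity arguments, so working in the localization costs nothing.
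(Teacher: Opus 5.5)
Your Steps 2--4 coincide with the paper's proof. The paper likewise takes a lift $\wt f$ from \Cref{compare-ns-positive-mixed}, applies \Cref{explicit-n} to $t^{p^n}+\wt f$, and descends using Proposition~4.6 of \cite{Yoshikawa25}. Your case analysis in Step~1 is also correct. \Cref{ns-vs-K3-tau} gives $\ns(f)=\tau(X,\cO(1))\le\sigma(X)$ whenever $\tau(X,\cO(1))\le 9$. The only remaining case is $\tau(X,\cO(1))=\sigma(X)=10$: there one only knows $10\le\ns(f)<\infty$, whereas \Cref{explicit-n} needs $\ns(\wt f)\le 10$. The paper obtains the bound in one line from \Cref{tau-vs-ns} and does not treat this case separately. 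It even writes $n\le\ns(\wt f)$, which is the wrong direction for \Cref{explicit-n}. So you have correctly located the delicate point.

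The gap is your proposed repair. To get $\rank\mathcal{L}_{10}(b,0)\le 9$, you need the complete intersection $V(H_1(s,0),\dots,H_{10}(s,0))\cap B'_{2D}$ to be singular at $b$. Ogus, via \Cref{Ogus-B'}, only tells you that $B'_{2D,11}$ is singular there. These two schemes have the same $k$-points, but \Cref{defining-equation-ht} identifies them scheme-theoretically only for $h\le 9$. Its proof uses equality of $k$-points plus reducedness of Ogus's stratum, and this fails at $h=10$ precisely because $B'_{2D,11}$ is non-reduced. Equal supports say nothing about singularity. Near a point with $\tau(X,\cO(1))=10$, the scheme $B'_{2D,10}=V(H_1(s,0),\dots,H_9(s,0))\cap B'_{2D}$ is smooth. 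Nothing you cite excludes that $H_{10}(s,0)$ restricts there to a local equation of a smooth divisor, with Ogus's non-reduced $B'_{2D,11}$ merely having the same support (compare $V(x)$ and $V(x^2)$).

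Closing the gap needs a genuine comparison. For example, you could show that $H_{10}(s,0)$ agrees with Ogus's tenth Hasse invariant up to a unit on $B'_{2D,10}$. Alternatively, you could argue directly that the rows $R_1,\dots,R_{10}$ of \Cref{explicit-formula} are linearly dependent at every supersingular point. As written, your argument proves the corollary when $\tau(X,\cO(1))\le 9$, in particular whenever $\sigma(X)\le 9$. In general it proves the statement with $p^n$ replaced by $p^{\ns(f)}$.
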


\begin{proof}
By \Cref{tau-vs-ns}, \Cref{compare-ns-positive-mixed} and \Cref{ns-vs-qfs}, there exists a lift $\wt{f} \in W(k)[x,y,z,w]$ of $f$ such that $n \leq \ns(\wt{f})< \infty$.
By \Cref{explicit-n}, the ring
\[
W(k)[x,y,z,w,t]/(t^{p^n}+\wt{f})
\]
is not quasi-$F$-split.
By \cite{Yoshikawa25}*{Proposition~4.6}, the ring 
\[
k[x,y,z,w,t]/(t^{p^n}+f)
\]
is not quasi-$F$-split, as desired.
\end{proof}

\begin{corollary}\label{no-sigma-geq-3}
Let $k$ be an algebraically closed field in characteristic two.
Let $X$ be a smooth sextic hypersurface in $\mathbb{P}_k(1,1,1,3)$.
If $X$ is supersingular, then $\sigma(X) \geq 3$.
\end{corollary}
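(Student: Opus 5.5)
The plan is to pass from \(\sigma(X)\) to the non-splitting index. For a smooth sextic in \(\P_k(1,1,1,3)\) we always have \(\tau(X,\cO(1))=\sigma(X)\), as noted after \Cref{intro:AI-to-ns}. By \Cref{ns-vs-K3-tau}, if \(X\) is supersingular with \(\sigma(X)\le 2\), then \(\ns(f)=\sigma(X)\le 2\). So it suffices to show \(\ns(f)\ge 3\) for every smooth weighted sextic \(f\) in characteristic \(2\) with \(A/fA\) not quasi-\(F\)-split. Since \(\ns(f)\) is defined through quasi-\(F\)-splitting of pairs, it is invariant under graded automorphisms of \(A\). We may therefore first complete the square in \(x_3\) and write
\[
f=x_3^2+x_3q_3(x_0,x_1,x_2)+q_6(x_0,x_1,x_2).
\]

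\emph{Ruling out \(\ns(f)=1\).} For \(p=2\) we have \(f^{p-2}=1\notin\m^{[2]}\), so \(I^{\ns}_1(f)=A\not\subseteq\m^{[p]}\) and \Cref{Fedder-ns} gives \(\ns(f)\ge 2\). (Equivalently, \(\cG_1(b,0)\) has rank \(1\): its only nonzero entry sits at the monomial \(x_0x_1x_2x_3\), which has degree \(6=d\).)

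\emph{Ruling out \(\ns(f)=2\).} Since \(A/fA\) is not quasi-\(F\)-split, \Cref{rank-chara-ns} says \(\ns(f)=2\) iff \(\rank\cG_2(b,0)\le 1\). For \(p=2\) we have \(G_2(b,0)=\Delta(f)\), so \(G_{2,i}\) is the coefficient of \(x^{3-\alpha_i}\) in \(\Delta(f)\). Thus \(\rank\cG_2\le1\) means that \(\Delta(f)\) has zero coefficient at \(x^{3-\alpha}\) for every degree-\(6\) exponent \(\alpha\neq(1,1,1,1)\). (The entry at \(\alpha=(1,1,1,1)\) is unconstrained, since that column is the only one where the first row is nonzero.) Now compute, using \(\Delta(a+b)=\Delta(a)+\Delta(b)+ab\) in characteristic \(2\):
\[
\Delta(f)=x_3^3q_3+x_3^2\bigl(q_6+\Delta(q_3)\bigr)+x_3q_3q_6+\Delta(q_6).
\]
Reading off coefficients gives two families of conditions.
\begin{itemize}
\item For \(\alpha=x_3\cdot x^{\beta}\) with \(\beta\) a cubic monomial in \(x_0,x_1,x_2\), \(\beta\ne(1,1,1)\): the coefficient of \((x_0x_1x_2)^3/x^\beta\) in \(q_6+\Delta(q_3)\) vanishes.
\item For \(\alpha=x_3^2\) and for \(\alpha\) a sextic monomial in \(x_0,x_1,x_2\): the coefficients of \((x_0x_1x_2)^3\) in \(q_3q_6\), and of \((x_0x_1x_2)^3/x^\alpha\) in \(\Delta(q_6)\), vanish.
\end{itemize}
Together with the non-quasi-\(F\)-split conditions \(H_n(b,0)=0\), these are explicit polynomial constraints on \((q_3,q_6)\).

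\emph{Main obstacle.} The key step is to show these constraints force \(X\) to be singular. I would first try to interpret them as saying that, modulo \(\m^{[4]}\), \(\Delta(f)\) lies in \(k\cdot(x_0x_1x_2)^2x_3^2+\m^{[4]}\), and then translate this into the existence of a singular point. Concretely, the first family makes \(q_6+\Delta(q_3)\) essentially a square plus a multiple of \(x_0x_1x_2q_3\)-type terms. After the substitution \(x_3\mapsto x_3+\ell\) with \(\ell\) a suitable cubic, I expect to reduce to the case where \(q_6\) is, up to \(\m^{[4]}\)-invisible terms, of a special form relative to \(q_3\). The Jacobian criterion in characteristic \(2\) (partials \(q_3\), \(x_3\partial_jq_3+\partial_jq_6\)) should then produce a common zero on the curve \(q_3=0\).

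If that direct route fails, the fallback is the method of \Cref{AI-to-example}. Use \Cref{compare-ns-positive-mixed} and \Cref{explicit-n} to deduce that \(k[x_0,\dots,x_3,t]/(t^4+f)\) is not quasi-\(F\)-split. Then contradict this by exhibiting an explicit splitting via the Fedder-type criterion \Cref{variant-fedder}, exploiting the square term \(x_3^2\) and the substitution \(t\mapsto t+x_3\), \(t^4+x_3^2=(t^2+x_3)^2\), to reduce to a lower-dimensional computation on \(x_3q_3+q_6\).
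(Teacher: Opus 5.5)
You have not proved the statement: the step that carries the content is left open, and your bookkeeping there contains an error that hides an easy resolution.

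The reduction is fine. For weighted sextics $\tau(X,\cO(1))=\sigma(X)$, so \Cref{ns-vs-K3-tau} reduces the claim to $\ns(f)\ge 3$, and $\ns(f)\ge 2$ is automatic for $p=2$. (You cannot complete the square in characteristic $2$, but you only need the $x_3^2$-coefficient to be nonzero, which smoothness at $(0:0:0:1)$ gives.) By \Cref{rank-chara-ns}, $\ns(f)=2$ means $\Delta(f)\in k\,(x_0x_1x_2x_3)^2+\m^{[4]}$. This reformulation of yours is correct, since every monomial of weighted degree $12$ outside $\m^{[4]}$ has the form $x^{3-\alpha}$.

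The error is in your second family of conditions. Take $\alpha=(\gamma_0,\gamma_1,\gamma_2,0)$. The monomial $x^{3-\alpha}=x_3^3\,x_0^{3-\gamma_0}x_1^{3-\gamma_1}x_2^{3-\gamma_2}$ has $x_3$-exponent $3$, so its coefficient in $\Delta(f)$ comes only from the term $x_3^3q_3$. It has nothing to do with $\Delta(q_6)$, which has degree $12$ in $x_0,x_1,x_2$, so the condition you wrote is vacuous. As $\gamma$ runs over sextic exponents with entries $\le 3$, the complementary exponent runs over all cubic monomials. The constraint is therefore simply $q_3=0$.

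This is the decisive step in the paper. The paper reaches it through the mixed-characteristic criterion: \Cref{compare-ns-positive-mixed} and \Cref{characterization} give $\Delta(f)+h^2\in\m^{[4]}$ for some $h$, and $y^3$-terms can only come from $y^3g_3$. Your positive-characteristic rank criterion reaches it equally directly once read correctly. Your plan of finding a singular point ``on the curve $q_3=0$'' after a substitution $x_3\mapsto x_3+\ell$ is aimed at a situation that does not occur.

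The second missing piece is showing that $x_3^2+q_6$ is singular. Saying the Jacobian criterion ``should produce a common zero'' is not an argument, since three plane quintics need not have a common zero. What forces one is Euler's relation $\sum_i x_i\,\partial q_6/\partial x_i=6q_6=0$ in characteristic $2$. The paper regards the three partials as a section of $\Omega^1_{\P^2}(6)$ and uses $c_2(\Omega^1_{\P^2}(6))=21H^2\neq 0$ to obtain a zero $[a_0:a_1:a_2]$. Alternatively, if the partials had no common zero they would form a regular sequence in $k[x_0,x_1,x_2]$, and then the degree-one syzygy $(x_0,x_1,x_2)$ could not be a combination of Koszul syzygies.

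Now choose $b$ with $b^2=q_6(a_0,a_1,a_2)$. The point $[a_0:a_1:a_2:b]$ lies on $X$ in the smooth locus of $\P(1,1,1,3)$, and every partial of $f$ vanishes there (note $\partial f/\partial x_3=2x_3+q_3=0$). This contradicts smoothness. Your fallback via $t^4+f$ is only a sketch and is not needed.
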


\begin{proof}
Let $f \in A:=k[x_0,x_1,x_2,y]$ be an equation of $X$, where $\deg(x_i)=1$ and $\deg(y)=3$.
Let $\wt{f}\in \wt{A}:=W(k)[x_0,x_1,x_2,y]$ be a lift of $f$.
It suffices to show that $\ns(\wt{f}) \geq 3$ by \Cref{compare-ns-positive-mixed}, \Cref{ns-vs-K3-tau}, and \Cref{remark:tau}.
Set
\[
\m:=(x_0,x_1,x_2,y) \subseteq A. 
\]
By \Cref{characterization}, we have $\ns(\wt{f}) \geq 2$.
We set 
\[
f=ay^2+y g_3+g_6
\]
for some $a \in k$ and homogeneous elements $g_3, g_6\in A$ with
\[
\deg(g_3)=3, \quad \deg(g_6)=6.
\]
If $a=0$, then $(0:0:0:1)$ is a singular point of $X$.
Thus, we have $a \neq 0$.
After scaling, we may assume $a=1$. 

Suppose that $\ns (\wt{f}) =2$. By \Cref{characterization}, we have $\Delta(\wt{f}) \in \m^{[4]}$.
For some homogeneous element $h \in A$ of degree $6$, we have
\[
\Delta(\wt{f})=y^3g_3+y^2g_6+y g_3g_6+y^2\Delta(g_3)+\Delta(g_6)+h^2.
\]
Since it is contained in $\m^{[4]}$ and the only $y^3$-term in $\Delta(\wt{f})$ is $y^3g_3$, we have $g_3 \in \m^{[4]}$.
Since $\deg(g_3)=3$, we have $g_3=0$.

We now show that this is impossible if $X$ is smooth.
Indeed,
\[
\frac{\partial  f}{\partial y}=0,
\qquad
\frac{\partial  f}{\partial x_i}=\frac{\partial h_6}{\partial x_i}
\qquad (0\le i\le 2).
\]
Since $\deg(h_6)=6$ and $\operatorname{char}(k)=2$, Euler's relation gives
\[
\sum_{i=0}^2 x_i\frac{\partial h_6}{\partial x_i}=6h_6=0.
\]
Thus the three-tuple
\[
\left(\frac{\partial h_6}{\partial x_0},\frac{\partial h_6}{\partial x_1},
\frac{\partial h_6}{\partial x_2}\right)
\]
defines a global section $s$ of $\Omega^1_{\mathbb P^2}(6)$.
If $s$ is nowhere zero, then we obtain the exact sequence
\[
0\to \mathcal O_{\P^2} \to \Omega^1_{\mathbb P^2}(6) \to Q \to 0
\]
with $Q$ a line bundle. 
Therefore
\[
c(E)=c(\mathcal O_Y)c(Q)=c(Q),
\]
so in particular $c_2(\Omega^1_{\mathbb P^2}(6))=0$.

On the other hand, by the Euler sequence
\[
0\to \Omega^1_{\mathbb P^2}(6)\to \mathcal O_{\mathbb P^2}(5)^{\oplus 3}
\to \mathcal O_{\mathbb P^2}(6)\to 0,
\]
we have
\[
c\bigl(\Omega^1_{\mathbb P^2}(6)\bigr)=\frac{(1+5H)^3}{1+6H},
\]
where $H=c_1(\mathcal O_{\mathbb P^2}(1))$. Hence
\[
c_2\bigl(\Omega^1_{\mathbb P^2}(6)\bigr)=21H^2\neq 0.
\]

This contradiction shows that the section $s$
must vanish at some point
\[
[a_0:a_1:a_2]\in \mathbb P^2_{k}.
\]
That is,
\[
\frac{\partial h_6}{\partial x_i}(a_0,a_1,a_2)=0
\qquad (0\le i\le 2).
\]

Since $k$ is algebraically closed, there exists $b\in k$ such that
\[
b^2=h_6(a_0,a_1,a_2).
\]
Then
\[
[a_0:a_1:a_2:b]\in X
\]
is a singular point of $X$, contradicting the smoothness of $X$.

This contradiction shows that $\ns(\wt{f}) \geq 3$, thus we obtain $\ns(f) \geq 3$.
\end{proof}




\subsection{Possibility of non-splitting indices of lifts}

\begin{theorem}\label{possible-ns}
Assume the setting of \cref{notation:AI-vs-NS}, and fix \(b\in k^m\).
Then
\[
\{\ns(\wt{G}(b,c))\mid c\in k^m\}=
\begin{cases}
\{1\} & \text{if }\ns(G(b))=1,\\
\{\infty\} & \text{if }\ns(G(b))=\infty,\\
\{\ns(G(b)),\infty\} & \text{if $1< \ns(G(b))<\infty$}.
\end{cases}
\]
\end{theorem}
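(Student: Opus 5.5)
The plan is to split into the three cases of the statement, using \Cref{compare-ns-positive-mixed} and the explicit description \eqref{eq:chara-ns} of $\ns(\wt{G}(b,c))$. Throughout, $n:=\ns(G(b))$.

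\emph{Case $n=\infty$.} By \Cref{compare-ns-positive-mixed}, $\min_c \ns(\wt G(b,c))=\infty$, so every lift has $\ns=\infty$.

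\emph{Case $n=1$.} Here $G(b)^{p-2}\in\m^{[p]}$. For every $c$, the element $\wt f_1=\wt G(b,c)^{p-2}$ reduces modulo $p$ to $G(b)^{p-2}$, so $\wt f_1\in(p)+\m^{[p]}$. The conditions in \Cref{characterization}(3) for $i\le n-1$ are vacuous, so condition (3) holds with $n=1$, and hence $\ns(\wt G(b,c))=1$ for all $c$.

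\emph{Case $1<n<\infty$.} By \Cref{ns-vs-qfs}, $A/G(b)$ is not quasi-$F$-split, so \eqref{eq:chara-ns} applies. \Cref{compare-ns-positive-mixed} shows that $n$ is attained and is the minimum. It remains to prove two things:
\begin{enumerate}
\item[(a)] if $\ns(\wt G(b,c))<\infty$, then $\ns(\wt G(b,c))=n$;
\item[(b)] some $c$ gives $\ns(\wt G(b,c))=\infty$.
\end{enumerate}

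First reformulate the condition. Since $G_l(b,c)$ is homogeneous of degree $(p^l-1)d-d$, the monomials of that degree with all exponents $<p^l$ are exactly the $x^{p^l-1-\alpha_i}$. Hence $G_l(b,c)\in\m^{[p^l]}$ if and only if $G_{l,i}(b,c)=0$ for all $i$. By \Cref{decomposition-G_n}, this says
\[
\bigl(K^{(l)}_{l-1}(M_1,\dots,M_{l-1}),\dots,K^{(l)}_1(M_1),1\bigr)\cdot\mathcal G_l(b,0)=0,
\qquad M_j=M_j(b,c).
\]
By \Cref{rank-chara-ns}, the first $n-1$ rows of $\mathcal G_l(b,0)$ are independent and the $n$-th row is a combination of them.

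For (b): the vector $(M_1(b,c),\dots,M_{n-1}(b,c))^{T}=\mathcal G_{n-1}(b,0)\,c$ ranges over all of $k^{n-1}$, because $\mathcal G_{n-1}(b,0)$ has rank $n-1$. The relation above at level $l=n$ forces $K^{(n)}_j(M_1,\dots,M_j)$ to equal the unique coefficients $c_j$ of the dependency in \Cref{choice-lambda_i-G}. Since $K^{(n)}_{n-1}=-M_{n-1}^{p}+(\text{lower terms})$, for fixed $M_1,\dots,M_{n-2}$ only finitely many values of $M_{n-1}$ satisfy the relation. So the locus of $c$ with $G_n(b,c)\in\m^{[p^n]}$ is a proper closed subset, and a general $c$ gives $\ns(\wt G(b,c))>n$. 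Combined with (a), this gives $\ns(\wt G(b,c))=\infty$.

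For (a), which I expect to be the main obstacle, I would argue via the Fedder-type chain. Suppose $G_n(b,c)\notin\m^{[p^n]}$; I want $G_l(b,c)\notin\m^{[p^l]}$ for every $l>n$.
\begin{enumerate}
\item[(i)] Put $a_l:=u^{l-1}(F^{l-1}_*G_l(b,c)\,G(b))$-type elements of degree $d(p-1)$, as in the proof of \Cref{rank-chara-ns}. Use $G_{l+1}=G_l^{p}\,G^{p^2-2p}\Delta(G)(b,c)\cdot(\dots)$ to write $a_{l+1}=\theta_{G(b)}(F_*a_l)$ up to $\Delta(G)(b,c)$ versus $\Delta(G)(b,0)$ corrections. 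By the argument in \Cref{compare-ns-positive-mixed}, these corrections vanish on $\Ker(u)$.
\item[(ii)] The first new ingredient is that at stage $n$, the relation $\mathcal G_n(b,0)$ of rank $n-1$ together with $G_n(b,c)\notin\m^{[p^n]}$ makes the degree-$d$ coefficient vector of $G_n(b,c)$ nonzero. Then the subsequent chain restarts as the positive-characteristic chain $\theta_{G(b)}^{\,j}$ applied to a nonzero element of $G(b)^{p-2}A_d$, plus terms controlled by \Cref{variant-fedder}. Since $A/G(b)$ is not quasi-$F$-split, \Cref{variant-fedder} puts those extra terms in $\Ker(u)\cap\m^{[p]}$.
\item[(iii)] The second, and most delicate, point is to show the restarted chain never lands entirely in $\m^{[p]}$. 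This is essentially the statement that the splitting-order sequence cannot return to a value $\ge 2$ after stage $n$ has failed. I would try to extract it from \cite{Yoshikawa25-cri}*{Corollary~5.3} by comparing $\wt f(1,\dots,1,2)$ at levels $n$ and $l$. If that fails, I would instead try to prove directly, by induction on $l$ and using \Cref{relation-M-L}, that the rows of $\mathcal G_l(b,0)$ beyond the $n$-th admit no dependency with leading coefficient $1$ compatible with the $K^{(l)}$-values.
\end{enumerate}
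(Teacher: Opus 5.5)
\textbf{There is a genuine gap: your claim (a) is not proved.} Your treatment of the cases $\ns(G(b))=\infty$ and $\ns(G(b))=1$ is correct. For the latter you argue directly via \Cref{characterization}(3), whereas the paper uses $\lambda=0$; both work. Your genericity argument that $G_n(b,c)\notin\m^{[p^n]}$ for general $c$ is also correct.

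The gap matters because (a) is the real content of the theorem. Steps (i)--(iii) are only a plan, and you leave the decisive step (iii) open. Your part (b) also depends on (a), since genericity only gives $\ns(\wt G(b,c))>n$. Neither proposed route for (iii) is carried out, and the first cannot work by itself. \Cref{characterization} and \cite{Yoshikawa25-cri}*{Corollary~5.3} only locate the first index $l$ with $\wt f_l\in\m^{[p^l]}$. They give no reason why no such $l>n$ can exist once level $n$ has failed.

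\textbf{The missing ingredient is a Frobenius-semilinear recursion for the rows.} Put $R_{c,l}:=(G_{l,1}(b,c),\dots,G_{l,m}(b,c))$, the $l$-th row of $\mathcal{G}_l(b,c)$. By \eqref{eq:chara-ns}, $\ns(\wt G(b,c))=\inf\{l\mid R_{c,l}=0\}$. We have $G_{l}(b,c)=G_{l-1}(b,c)^p\,G(b)^{p-2}\Delta(G)(b,c)$ and $\Delta(G)(b,c)=\Delta(G)(b,0)-G(c)^p$. The computation in the proof of \Cref{explicit-formula} then gives $R_{c,l}=\Phi_c(R_{c,l-1})$. Here $\Phi_c$ is a $p$-semilinear endomorphism of $k^{1\times m}$ built from $T_c=T_0-c\lambda$, the matrix of $h\mapsto u(F_*(\Delta(G)(b,c)G(b)^{p-2}h))$ on degree-$d$ forms.

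Semilinearity yields two facts:
\begin{itemize}
\item Once $R_{c,N}=0$, all later rows vanish.
\item If $N$ is the first such index, then $R_{c,1},\dots,R_{c,N-1}$ are linearly independent: apply $\Phi_c^{N-1-j}$ to a linear relation to kill its coefficients one at a time.
\end{itemize}
Hence $\rank\mathcal{G}_l(b,c)=N-1$ for all $l\ge N-1$.

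On the other hand, \Cref{decomposition-G_n} gives $\mathcal{G}_l(b,c)=\mathcal{K}_l\,\mathcal{G}_l(b,0)$ with $\mathcal{K}_l$ lower unitriangular, so this rank does not depend on $c$. Compare with a $c_0$ realizing $\ns(\wt G(b,c_0))=n$, which exists by \Cref{compare-ns-positive-mixed}. This forces $N=n$, which is exactly (a).

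With (a) in hand, your (b) goes through. The paper instead exhibits an explicit $c$ with $\ns(\wt G(b,c))=\infty$, which does not need (a). Since $n>1$, one has $\lambda\neq0$. Take $v=e_j$ with $\lambda_j\neq0$ and $c=(\lambda v)^{-1}(T_0v-v)$. Then $T_cv=v$, and hence $R_{c,l}v=\lambda_j^{p^l}\neq0$ for every $l$.
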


\begin{proof}
In the case of \(\ns(G(b))=\infty\), the assertion follows from \Cref{compare-ns-positive-mixed}.

We assume \(\ns(G(b))<\infty\).
Then \(\wt{A}/(\wt{G}(b,c))\) is not quasi-$F$-split for every \(c\in k^m\) by
\Cref{ns-vs-qfs} and \cite{Yoshikawa25}*{Proposition~6.8}.

Let \(V\) be the \(k\)-vector space of homogeneous polynomials of degree \(d\), and set
\[
f:=G(b)\in V.
\]
We use the notation \(v_f\), \(\lambda\), \(u\), and \(T\) as in Section~\ref{section:explicit-formula},
with \(M_i:=x^{\alpha_i}\).

For each \(c\in k^m\), let \(T_c\) be the \(m\times m\) matrix representing
\[
F_*V\to V,\qquad
h\longmapsto u\!\left(F_*\bigl(\Delta(G)(b,c)G(b)^{p-2}h\bigr)\right)
\]
with respect to the bases \(F_*M_1,\dots,F_*M_m\) and \(M_1,\dots,M_m\).
Then \(T_0=T\).

For each \(c\in k^m\), define row vectors \(R_{c,n}\in k^{1\times m}\) recursively by
\[
R_{c,1}:=F(\lambda),\qquad
R_{c,n+1}:=F(R_{c,n})T_c\qquad (n\ge1).
\]
By an argument in \Cref{explicit-formula}, the \(n\)-th row of \(\mathcal G_n(b,c)\)
is \(R_{c,n}\). 
In particular, by \Cref{characterization-non-qfs}, we have
\begin{equation}\label{eq:ns-vs-A-corrected}
\ns(\wt{G}(b,c))
=
\inf\{\,n\ge1\mid R_{c,n}=0\,\}.
\end{equation}

Next, we compute \(T_c\).
By \cref{notation:AI-vs-NS}, we have
\[
\Delta(G)(b,c)=\Delta(G)(b,0)-G(c)^p.
\]
For \(h\in V\), let \(v_h\) be the column vector determined by
\[
h=(M_1,\dots,M_m)v_h.
\]
Then
\begin{align*}
(M_1,\dots,M_m)T_cF^{-1}(v_h)
&=
u\!\left(F_*\bigl(\Delta(G)(b,c)G(b)^{p-2}h\bigr)\right)\\
&=
u\!\left(F_*\bigl(\Delta(G)(b,0)G(b)^{p-2}h\bigr)\right)
-
u\!\left(F_*\bigl(G(c)^pG(b)^{p-2}h\bigr)\right)\\
&=
u\!\left(F_*\bigl(\Delta(G)(b,0)G(b)^{p-2}h\bigr)\right)
-
G(c)\,u\!\left(F_*\bigl(G(b)^{p-2}h\bigr)\right)\\
&=
(M_1,\dots,M_m)\bigl(T_0F^{-1}(v_h)-c\,\lambda F^{-1}(v_h)\bigr),
\end{align*}
where we regard \(c\) as a column vector.
Hence
\begin{equation}\label{eq:A_c-A_0-corrected}
T_c=T_0-c\lambda.
\end{equation}

Now fix a positive integer \(n\), and for \(1\le i\le n\), let \(g_i(c)\) be the \(i\)-th row of
\(\mathcal G_n(b,c)\).
By \cref{decomposition-G_n}, for each \(1\le i\le n\) we have
\[
g_i(c)
=
\sum_{j=0}^{i-1}
K_j^{(i)}\bigl(M_1(b,c),\dots,M_j(b,c)\bigr)\,g_{i-j}(0),
\qquad
K_0^{(i)}=1.
\]
Therefore,
\[
\mathcal G_n(b,c)=\mathcal{K}_n(b,c)\mathcal G_n(b,0),
\]
where \(\mathcal{K}_n(b,c)\) is the lower triangular matrix
\[
\mathcal{K}_n(b,c)=
\begin{pmatrix}
1 & 0 & 0 & \cdots & 0\\
K_1^{(2)} & 1 & 0 & \cdots & 0\\
K_2^{(3)} & K_1^{(3)} & 1 & \cdots & 0\\
\vdots & \vdots & \vdots & \ddots & \vdots\\
K_{n-1}^{(n)} & K_{n-2}^{(n)} & K_{n-3}^{(n)} & \cdots & 1
\end{pmatrix},
\]
whose entries are evaluated at
\[
\bigl(M_1(b,c),\dots,M_{n-1}(b,c)\bigr).
\]
In particular, \(\mathcal{K}_n(b,c)\) is invertible, and hence
\begin{equation}\label{eq:rank-independent}
\rank \mathcal G_n(b,c)=\rank \mathcal G_n(b,0)
\qquad\text{for every }n\ge1\text{ and }c\in k^m.
\end{equation}

Assume now that \(\ns(\wt{G}(b,c))<\infty\) for some \(c\in k^m\), and set
\[
N:=\ns(\wt{G}(b,c)).
\]
By \eqref{eq:ns-vs-A-corrected}, we have
\[
R_{c,N}=0,
\qquad
R_{c,N-1}\neq 0.
\]

We claim that
\[
R_{c,1},\,R_{c,2},\,\dots,\,R_{c,N-1}
\]
are linearly independent over \(k\).
Indeed, suppose that
\[
a_1R_{c,1}+a_2R_{c,2}+\cdots+a_{N-1}R_{c,N-1}=0
\qquad (a_i\in k).
\]
Let
\[
\Phi_c\colon k^{1\times m}\to k^{1\times m},\qquad
r\longmapsto F(r)T_c.
\]
Then \(R_{c,n+1}=\Phi_c(R_{c,n})\) for every \(n\ge1\).
Applying \(\Phi_c^{N-2}\) to the above relation, we obtain
\[
a_1^{p^{N-2}}R_{c,N-1}=0,
\]
because \(\Phi_c^{N-2}(R_{c,i})=R_{c,i+N-2}=0\) for every \(i\ge2\).
Since \(R_{c,N-1}\neq 0\), it follows that \(a_1=0\).

Now assume inductively that \(a_1=\cdots=a_{j-1}=0\) for some \(2\le j\le N-1\).
Then
\[
a_jR_{c,j}+a_{j+1}R_{c,j+1}+\cdots+a_{N-1}R_{c,N-1}=0.
\]
Applying \(\Phi_c^{N-1-j}\), we obtain
\[
a_j^{p^{N-1-j}}R_{c,N-1}=0,
\]
because \(\Phi_c^{N-1-j}(R_{c,i})=R_{c,i+N-1-j}=0\) for every \(i\ge j+1\).
Hence \(a_j=0\).
By induction, we conclude that
\[
a_1=\cdots=a_{N-1}=0.
\]
Thus \(R_{c,1},\dots,R_{c,N-1}\) are linearly independent.

Therefore, for every \(n\ge N-1\), the row space of \(\mathcal G_n(b,c)\) has dimension
\(N-1\), since its first \(N-1\) rows are \(R_{c,1},\dots,R_{c,N-1}\), while all later
rows vanish.
In particular,
\[
\rank \mathcal G_n(b,c)=N-1
\qquad (n\ge N-1).
\]
On the other hand, by \eqref{eq:rank-independent}, we have
\[
\rank \mathcal G_n(b,c)=\rank \mathcal G_n(b,0)
\qquad \text{for all } n\ge 1.
\]
Hence the integer \(N-1\) is independent of \(c\), and therefore
\[
\{\ns(\wt{G}(b,c))\mid c\in k^m\}\setminus\{\infty\}
\]
is a singleton.
By \Cref{compare-ns-positive-mixed}, this unique finite value is \(\ns(G(b))\).

Next, we show that \(\ns(\wt{G}(b,c))=\infty\) for some \(c\) if
\(\lambda\neq 0\).
Assume \(\lambda\neq 0\).
We write $\lambda = (\lambda_1, \ldots, \lambda_m).$
Choose \(j\) such that \(\lambda_j\neq 0\), and let
\[
v:=e_j\in k^m
\]
be the \(j\)-th standard basis vector, regarded as a column vector.
Then
\[
F(v)=v
\qquad\text{and}\qquad
F(\lambda)v=F(\lambda_j)\neq 0.
\]

Set
\[
c:=(\lambda v)^{-1}(T_0v-v),
\]
we obtain
\[
T_cv=T_0v-c\,\lambda v=v=F(v).
\]

Since \(R_{c,1}=F(\lambda)\), we have
\[
R_{c,1}v=F(\lambda)(v)=F(\lambda_j)\neq 0.
\]
Assume inductively that \(R_{c,n}v\neq 0\). Then
\[
R_{c,n+1}v
=
F(R_{c,n})T_cv
=
F(R_{c,n})F(v)
=
F(R_{c,n}v)
\neq 0.
\]
Hence
\[
R_{c,n}v\neq 0
\qquad\text{for all }n\ge1.
\]
In particular, \(R_{c,n}\neq 0\) for every \(n\ge1\), and therefore
\[
\ns(\wt{G}(b,c))=\infty
\]
by \eqref{eq:ns-vs-A-corrected}.

Finally, if \(\lambda=0\), then \(F(\lambda)=0\), so \(R_{c,1}=0\) for every \(c\).
Hence
\[
\ns(\wt{G}(b,c))=1
\]
for every \(c\in k^m\) by \eqref{eq:ns-vs-A-corrected}.
Since \(\ns(G(b))=1\) by \Cref{explicit-formula}, this proves the assertion in this case as well.
The above arguments prove the theorem.
\end{proof}

\begin{corollary}\label{p-pure-lift}
Assume the setting of \cref{notation:AI-vs-NS}, and fix \(b\in k^m\) and write $f:=G(b)$.
If $\ns(f)>1$, then there exists a homogeneous lift $\wt{f} \in W(k)[x_0,\ldots,x_N]$ such that $\wt{R}:=W(k)[x_0,\ldots,x_N]/(\wt{f})$ is perfectoid pure with
\[
\ppt(\wt{R},p) \geq \frac{p-2}{p-1}.
\]
\end{corollary}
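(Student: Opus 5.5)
The plan is to use \Cref{possible-ns} to choose a lift with infinite mixed-characteristic non-splitting index. Then I will convert this into the perfectoid purity statement using the results of \cite{Yoshikawa25-cri} and \cite{Yoshikawa25} already quoted in the proof of \Cref{characterization}.

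\textbf{Choice of the lift.} Since $\ns(f)>1$, \Cref{possible-ns} shows that $\infty$ lies in $\{\ns(\wt{G}(b,c))\mid c\in k^m\}$ in both remaining cases $1<\ns(f)<\infty$ and $\ns(f)=\infty$. So there is $c\in k^m$ with $\ns(\wt{G}(b,c))=\infty$. Put $\wt{f}:=\wt{G}(b,c)$. It is homogeneous of degree $d$ and lifts $f$. We work at the homogeneous maximal ideal $(p,x_0,\ldots,x_N)$; by homogeneity, the local statements there transfer to the graded ring $\wt{R}$.

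\textbf{Reading off the splitting-order sequence.} Let $\bs(\wt{f})=(s_i)_{i\ge0}$. By definition, $\ns(\wt{f})=\infty$ means $s_n\le 1$ for all $n\ge1$. I then split into two cases.
\begin{enumerate}
\item If $\wt{R}$ is quasi-$F$-split, then \cite{Yoshikawa25}*{Theorem~B}, as invoked in the proof of \Cref{characterization}, gives $\ppt(\wt{R},p)>\frac{p-2}{p-1}$. Perfectoid purity should also come from quasi-$F$-splitting through the same circle of results.
\item If $\wt{R}$ is not quasi-$F$-split, then \cite{Yoshikawa25-cri}*{Proposition~3.9} gives $s_i=1$ for every $i\ge1$.
\end{enumerate}

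\textbf{Main step.} In the non-quasi-$F$-split case, I will invoke \cite{Yoshikawa25-cri}*{Theorem~B} in its converse direction. That theorem relates the bound $s_i\le1$ for all $i$ to $\ppt(\wt{R},p)\ge\frac{p-2}{p-1}$, and in particular to perfectoid purity, since a positive $\ppt$ value requires purity. This is the step I expect to be the main obstacle. The proof of \Cref{characterization} only uses one implication of Theorem~B, namely that a $\ppt$ bound forces $s_i\le1$. Here we need that $s_i\le1$ for all $i$ yields both perfectoid purity and the non-strict inequality $\ge$. I also need to confirm the exact normalization there ($\ge$ versus $>$).

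If Theorem~B turns out to be one-directional, the fallback is as follows. Use the characterization of $\ppt$ via the splitting-order sequence in \cite{Yoshikawa25-cri}, presumably $\ppt=\lim$ of a formula in the $s_i$ of the form $1-\frac{\sum s_i p^{-i}}{\cdots}$. With $s_0=1$ and $s_i=1$ for all $i\ge1$, the limit would evaluate to exactly $\frac{p-2}{p-1}$, and finiteness of all $s_i$ would give perfectoid purity.
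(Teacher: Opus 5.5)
This is essentially the paper's argument. You choose the lift with $\ns(\wt f)=\infty$ via \Cref{possible-ns}, read off $s_i\le 1$, and the step you flag is supplied directly by \cite{Yoshikawa25-cri}*{Theorem~A}, which gives perfectoid purity together with $\ppt(\wt R,p)\ge\frac{p-2}{p-1}$ from that bound alone. So your case split on quasi-$F$-splitting and the appeal to a converse of Theorem~B are unnecessary; also, for $p=2$ the bound is $0$, so purity cannot be deduced from positivity of $\ppt$ and must come from that criterion itself.
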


\begin{proof}
By \cref{possible-ns}, there exists a homogeneous lift $\wt{f} \in W(k)[x_0,\ldots,x_N]$ such that $\ns(\wt{f})=\infty$.
Thus, the splitting order sequence $(s_i)_{i \geq 0}$ of $\wt{f}$ satisfies $s_i \leq 1$ for every $i \geq 0$.
By \cite{Yoshikawa25-cri}*{Theorem~A}, the ring $\wt{R}$ is perfectoid pure and
\[
\ppt(\wt{R},p) \geq \frac{p-2}{p-1},
\]
as desired.
\end{proof}

\section{Examples}\label{Section:example}

\subsection{Calabi--Yau hypersurfaces}
In this subsection, we present several computational examples using computer algebra systems (cf.\ \cite{Takamatsu_code}).

\begin{example}[Smooth quartic K3 surfaces over $\F_2$]\label{example:ch=2}
According to \cite{Degtyarev}*{Remark 7.7}, a smooth quartic surface $(X,\cO(1))$ in characteristic 2 satisfies $\sigma (X) \geq 3$ ($\sigma (X) \geq 2$ also follows from our \cref{cor:K3Artininvariant1}).
Note that $\tau (X,\cO(1))$ may be $\sigma (X)-1$ in this case.
However, if there exists a line on $X$, we have $\tau (X,\cO(1)) = \sigma (X)$ (see \cref{remark:tau}).
We obtain Table~\ref{table:K3 surfacee-p=2} consisting of smooth supersingular quartics over $\F_2$ containing a line $x=w=0$.
We note that we could not find an example with $\tau (X,\cO(1)) =10$.
We tentatively expect that every $(X,\cO(1))$ with $\sigma (X) =10$ satisfies $\tau (X,\cO(1)) =9.$

\begin{table}[ht]\label{table:p=2}
\caption{Artin invariants of smooth supersingular K3 surfaces over $\F_2$}
\centering
\begin{tabular}{|c|l|}
\hline
$\sigma(X)$ &  \hspace{20mm} equation\\
\hline
\hline
$3$ & $x^4 + x^2y^2 + xy^3 + yw^3 + z^3w$ \\
\hline
$4$ & $x^4 +xy^3 + z^3w + yzw^2 + zw^3$ \\
\hline
$5$ & $x^4 + xy^3 + z^3w + y^2w^2 +  yw^3$ \\
\hline
$6$ & $x^4 + x^2y^2 + xy^3 + yz^2w + z^3w + zw^3$ \\
\hline
$7$ & $x^4 + x^3y + xz^3 + y^3w + w^4$ \\
\hline
$8$ &  $x^4 + xy^3 + x^2yw + z^3w + yzw^2 + xw^3$\\
\hline
$9$ & $x^4 + xy^3 + yw^3 + z^3w$ \\
\hline
\end{tabular}
\label{table:K3 surfacee-p=2}
\end{table}
\end{example}

\begin{example}[Smooth quartic K3 surfaces over $\F_3$]\label{example:ch=3}
For any quartic K3 surfaces $(X, \cO(1))$ in characteristic $3$, we have $\tau(X,\cO(1)) = \sigma (X)$ (see \cref{remark:tau}).
We obtain Table~\ref{table:K3 surfacee-p=3} consisting of smooth supersingular K3 surfaces over $\F_3$:

\begin{table}[ht]
\caption{Artin invariants of smooth supersingular K3 surfaces over $\F_3$}
\centering
\begin{tabular}{|c|l|}
\hline
$\sigma(X)$ &  \hspace{45mm} equation\\
\hline
\hline
$1$ & $x^4+y^4+z^4+w^4$ \\
\hline
$2$ & $x^4 + y^3z  + yz^3 + xy^2w +  w^4 $ \\
\hline
$3$ & $x^4 + x^2y^2 + y^3z + yz^3 + w^4$ \\
\hline
$4$ & $x^3y + xy^3 + x^2yz + z^3w + zw^3$ \\
\hline
$5$ & $x^3 y + x^2yz + y^3z + z^3w + xw^3$ \\
\hline
$6$ & $x^4 + x^3y+x^2yz+y^3z+z^3w+xw^3$ \\
\hline
$7$ & $-x^4+x^3y+y^4 -x^2yz + y^3z + x^2zw +z^3w + xw^3$ \\
\hline
$8$ &  $x^4 + x^3y + x^3z + x^2yz - y^3z + y^2z^2 + z^3w + xyw^2 + xw^3$\\
\hline
$9$ & $-x^4 + x^3y + x^3z + x^2yz + y^3z + y^3w -z^3w + xw^3$ \\
\hline
$10$ &  
$x^4 + x^3y + xy^3 + x^2yz + y^3z + x^3w + y^2zw + z^3w + xyw^2 + xw^3$
\\
\hline
\end{tabular}\\
\label{table:K3 surfacee-p=3}
\end{table}
\end{example}

\begin{definition}
Let $k$ be an $F$-finite field of characteristic $p>0$, and let $X$ be a smooth $d$-dimensional hypersurface in $\P^{d+1}$.
Assume that $d\ge 3$.
Let $f$ be a defining equation of $X$ in $\P^{d+1}$.
We define the non-splitting index of $X$ by
\[
\ns(X):=\ns(f).
\]
\end{definition}

\begin{remark}
The above definition is well-defined.
Indeed, let $\sigma\colon X\xrightarrow{\sim} X$ be an automorphism.
Since $d\ge 3$, we have $\rho(X)=1$ by the Grothendieck--Lefschetz theorem; hence $\sigma$ preserves the hyperplane class and extends to an automorphism of the ambient projective space $\P^{d+1}$.
Thus $\sigma$ is induced by an automorphism $\phi$ of $A:=k[x_0,\ldots,x_{d+1}]$.
Since $\phi$ induces an isomorphism of pairs
\[
(A,(1-\frac{2}{p^n})\div(f))
\simeq
(A,(1-\frac{2}{p^n})\div(\phi(f))),
\]
it follows from the definition that $\ns(f)=\ns(\phi(f))$.
\end{remark}

\begin{example}[Quintic threefolds over $\F_2$]\label{example:CY}
For quintic threefolds in $\P^4$ over $\F_2$,
the variety $X$ defined by the following polynomial $f \in k[x,y,z,w,u]$ satisfies $\ns (X) =58.$
Among the examples we have found so far, this is the largest value.
\[
\begin{aligned}
f ={} & x^5 + x^4y + x^2y^3 + y^5 + x^3yz + x^3z^2 + z^5 + x^2y^2w + x^3w^2 + xy^2w^2 \\
&+ yz^2w^2 + w^5  + x^4u + xy^3u + xz^2u^2 + xw^2u^2 + yw^2u^2 + u^5.
\end{aligned}
\]
Note that in \cite[Example 6.3]{kty} we obtained an example of a quintic threefold of height 60.
Since the moduli number of quintic threefolds is $101$, this example suggests that the stratification defined by $\ns$ and the height is not as well-behaved as one might expect, in the sense that the dimension does not necessarily drop by one at each step.
\end{example}

\subsection{Delsarte K3 surfaces}

For K3 surfaces in a class called Delsarte type, \cite{ShiodaDelsarte}, \cite{Goto96}, \cite{Yui}, and \cite{Goto04} gave the formula for heights and Artin invariants under certain assumptions on the characteristic.
In this section, we apply our theorem to the smooth members of this class and remove the assumptions on the characteristic imposed in \cite{Goto96}.

\begin{definition}
\label{defn:Delsarte}
Let $(q_0,\ldots,q_3)=(1,1,1,1)$ or $(1,1,1,3)$.
Let $A=(a_{ij})$ be a $4\times 4$ matrix such that $a_{ij}\in \Z_{\geq 0}$ for every $i,j$ and
\[
\sum_{j=0}^3 a_{ij}q_j=\sum_{j=0}^3 q_j
\]
for each $i=0,\ldots,3$.
Set
\[
f_A:=\sum_{i=0}^3 x_0^{a_{i0}}x_1^{a_{i1}}x_2^{a_{i2}}x_3^{a_{i3}} \in \Z[x_0,x_1,x_2,x_3]
\]
Let $\Z[x_0, \ldots, x_3]$ be a $\Z$-graded ring with $\deg (x_i) =q_i.$
We set 
\[
X_{A} = \Proj (\Z[x_0, \ldots, x_3]/ (f_A)).
\]
Moreover, for any algebraically closed field $k$, we set $X_{A,k} := X_{A}\times_{\Z}k$.
If $X_{A,k}$ is smooth, we say $X_{A,k}$ is a smooth Delsarte surface over $k$ associated with $A$.
\end{definition}


\begin{theorem}[cf.~\cite{Goto96}*{Theorem~2.3},~\cite{Goto04}*{Theorem~3.2}]\label{Delsarte}
Let $k$ be an algebraically closed field of characteristic $p>0$.
In the setting of \cref{defn:Delsarte}, we set
$d := \det (A)$.
Let $A'$ be the adjugate matrix of $A$, and define
\[
(\alpha_0,\alpha_1,\alpha_2,\alpha_3):=(1,1,1,1)A'.
\]
Let
\[
g:=\gcd(\alpha_0,\ldots,\alpha_3,d)
\quad\text{and}\quad
e_A:=d/g.
\]
Assume that $X_{A,k}$ is a smooth Delsarte surface.
Then $p\nmid e_A$, and
\[
\sigma(X_{A,k})=\inf\{n \mid p^n \equiv -1 \pmod{e_A}\}
\]
if the set on the right-hand side is nonempty.
Furthermore, if this set is empty, then the Artin--Mazur height $\sht(X_{A,k})$ is finite and
\[
\sht(X_{A,k})=\inf\{n \mid p^n \equiv 1 \pmod{e_A}\}.
\]
\end{theorem}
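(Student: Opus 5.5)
We propose to reduce the statement to the explicit formula of \Cref{explicit-formula}, combined with \Cref{ns-vs-K3-tau} and \Cref{remark:tau}. For a sextic, $\tau=\sigma$ always holds. For a quartic we also need $\tau(X_{A,k},\cO(1))=\sigma(X_{A,k})$; for Delsarte quartics we plan to get this from the fact that $X_{A,k}$ contains lines, or from $p\nmid e_A$ giving $p\nmid D$ issues only when $p=2$.

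The key feature of $f_A$ is that it has only four monomials, so all the objects of \Cref{explicit-formula} become monomial computations. We change variables by the diagonal torus action: $X_{A,k}$ is a quotient of a Fermat-type surface, and the relevant monomials are indexed by the group $\Z^4/\Z^4A$ (of order $d$). The Jacobian-ring basis of degree $d$ corresponding to $H^2(\cO)$ is the single monomial $x_0x_1x_2x_3$, and this corresponds to the character $(\alpha_0,\dots,\alpha_3)/d$, whose order is $e_A$.

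The first step is to compute $\theta_f^{\,n-1}(F_*^{n-1}f^{p-1})$ and the vectors $R_n$ explicitly. We expand $f^{p-1}$ and $\Delta(f)$ multinomially; multinomial coefficients are nonzero mod $p$ exactly under the digit (Lucas) conditions. The coefficient of $(x_0\cdots x_3)^{p^n-1}$ is nonzero iff the exponent vector $(p^n-1)(1,1,1,1)$ is a nonnegative integral combination of rows of $A$ with controlled digits. This amounts to solving $yA=(p^n-1)\mathbf 1$, i.e. $y=(p^n-1)\mathbf 1A'/d$, which is integral iff $e_A\mid p^n-1$. This recovers $\sht=\inf\{n\mid p^n\equiv 1 \bmod e_A\}$ when that happens before $p^n\equiv -1$. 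We prove $p\nmid e_A$ separately from smoothness: if $p\mid e_A$, the Jacobian criterion applied to $f_A$ forces a singular point, since the partials of the four monomials with $p$-divisible exponent structure vanish along a torus orbit closure.

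The second step is the supersingular case, where some $p^n\equiv-1 \pmod{e_A}$. Then no $p^n\equiv 1$ occurs before twice that $n$, the row vectors $R_n$ are all supported on monomials in the orbit of the character under multiplication by $p$, and $R_nv_f=0$ for all $n$. We then compute $\rank(R_1,\dots,R_n)$. Each $R_j$ is (up to a nonzero scalar) a single monomial dual vector $M_{\chi p^j}$. These are distinct, hence independent, until the character $-p^{j}\chi$ coincides with one already seen. That first occurs when $p^n\equiv -1$, where $R_n$ becomes zero or repeats. This gives $\ns(f_A)=\inf\{n\mid p^n\equiv-1\}$, which is at most $10$, and \Cref{ns-vs-K3-tau} finishes the proof.

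The main obstacle is the bookkeeping showing that exactly one monomial survives at each stage, i.e. that the multinomial coefficients arising from $\Delta(f)$ and $f^{p-2}$ are nonzero precisely at the predicted character and vanish elsewhere. This includes the delicate case $p=2,3$, where $\Delta$ has few terms. We would try to handle it via a Fermat cover $x_i\mapsto$ monomials, reducing to the diagonal Fermat case where Lucas' theorem makes everything transparent.
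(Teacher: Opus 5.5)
\paragraph{The unproved step.} Your set-up is sound as far as it goes. Since $\det A\neq 0$, the substitution $Y_i\mapsto x^{a_i}$ ($a_i$ the rows of $A$) is injective on monomials. So every entry of $R_j$, and also $R_jv_f$, is (up to Frobenius twist) the coefficient of a single monomial $Y^y$ in $Q_j:=L^{p-2}\bigl(L^{p(p-2)}\Delta(L)\bigr)^{1+p+\cdots+p^{j-2}}$, resp.\ in $LQ_j$, where $L=Y_0+\cdots+Y_3$. The entry at $x^\mu$ vanishes unless $\mu\equiv(p^j-1)\mathbf 1$ in $\Z^4/\Z^4A$, a group in which $\mathbf 1$ has order $e_A$. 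This gives the \emph{vanishing} half of your claims: $R_jv_f=0$ unless $e_A\mid p^j-1$, and $R_1,\dots,R_{n_0}$ are supported on pairwise distinct classes.

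The theorem, however, rests on three nonvanishing/vanishing statements that you do not prove:
\begin{itemize}
\item $R_nv_f\neq 0$ at the first $n$ with $p^n\equiv 1$;
\item $R_j\neq 0$ for $j<n_0$;
\item $R_{n_0}=0$.
\end{itemize}
The last is forced. The class of $(p^{n_0}-1)\mathbf 1\equiv -2\cdot\mathbf 1$ is new, so $R_{n_0}$ cannot ``repeat'', and the rank drops only if $R_{n_0}$ vanishes identically. Likewise, at $n=2n_0$ one has $e_A\mid p^n-1$, so $R_nv_f=0$ there does not follow from the class argument, contrary to your remark.

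For $j=1$ the coefficients are multinomials of total $p-2$ or $p-1$, hence units. For $j\ge 2$, the coefficient of $Y^y$ in $Q_j$ is a sum over all ways of distributing $y$ among $L^{p-2}$, the powers of $L^{p(p-2)}$, and the copies of $\Delta(L)=\sum_\beta\frac1p\binom{p}{\beta}Y^\beta$. Nothing in your sketch controls its residue mod $p$. Passing to a Fermat surface does not help either: there one meets coefficients of the same universal $Q_j$, and the paper has no compatibility of $\ns$ with such covers. So your ``main obstacle'' is the whole content of the theorem.

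Incidentally, ``single monomial'' is false. A class can contain several monomials of degree $\sum q_j$: for $f_A=x_0^4+x_1^4+x_2^4+x_2x_3^3$, both $x_2^3x_3$ and $x_3^4$ lie in the class of $4\cdot\mathbf 1$, since they differ by $(0,0,4,0)-(0,0,1,3)$. Hence $R_{n_0}=0$ requires the simultaneous vanishing of several such coefficients.

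\paragraph{How the paper proceeds instead.} The paper has no uniform argument for this step. It classifies smooth Delsarte K3 surfaces via the arrow graphs of \Cref{prop:quartic-classificationA,prop:sextic-weighted-classificationB} (ten quartics, ten weighted sextics). It invokes \cite{Goto96} and \cite{Goto04} when $p$ divides no nonzero $a_{ij}$, nor $\sum q_j$, nor $\det A$. For the finitely many remaining smooth pairs $(f_A,p)$, all with $p\in\{2,3,5\}$, it verifies the formula by computer (using the scripts of \cite{Takamatsu_code}) and then applies \Cref{ns-vs-K3-tau}.

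The classification also supplies three facts your proposal asserts without proof:
\begin{itemize}
\item $p\nmid e_A$. Your Jacobian sketch does not exhibit a singular point, and divisibility of $\det A$ is not the right criterion: $x_0^4+x_0x_1^3+x_1x_2^3+x_2x_3^3$ has $\det A=108$ and $e_A=27$, and is smooth in characteristic $2$.
\item The bound $n_0\le 10$. You need it because \Cref{ns-vs-K3-tau} returns $\tau=10$ as soon as $\ns\ge 10$.
\item $\tau=\sigma$ in characteristic $2$. There the quartic just named is the only smooth Delsarte quartic, and it contains the line $x_0=x_2=0$.
\end{itemize}
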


\begin{proof}
By \cref{prop:quartic-classificationA,prop:sextic-weighted-classificationB}, we classify all smooth Delsarte K3 surfaces in characteristic zero in Tables \ref{Table:DelsarteA} and \ref{Table:DelsarteB}.
Therefore, \(f_A\) is one of the polynomials appearing in these tables.
If $p\nmid a_{ij}$ for every nonzero entry $a_{ij}$, $p\nmid \sum_{j=0}^3 q_j$, and $p\nmid \det(A)$, then the assertion follows from \cite{Goto96}*{Theorem~2.3} and \cite{Goto04}*{Theorem~3.2}.

In the remaining cases, we determine the prime numbers $p$ for which $X_{A,k}$ is smooth, as recorded in $(\star)$-columns of Tables \ref{Table:DelsarteA} and \ref{Table:DelsarteB}.
Hence, the assertion can be proved by using a computer algebra system (\cite{Takamatsu_code}).
Note that when \(p \neq 2\) or $(q_0,\ldots,q_3)=(1,1,1,3)$, we have \(\tau (X_{A,k}, \cO(1)) =\sigma (X_{A,k})\) by \cref{remark:tau}. 
It remains to consider the case where $p=2$ and
\[
f_A= x_0^4+x_0x_1^3+x_1x_2^3+x_2x_3^3.
\] 
Since this surface contains a line $x_0=x_2=0$, we again have \(\tau (X_{A,k}, \cO(1))=\sigma (X_{A,k}) \) by \cref{remark:tau}.

\end{proof}

\begin{table}[htbp]
\centering
\caption{Smooth Delsarte K3 surfaces for $(q_0,\ldots,q_3)=(1,1,1,1)$.}
\label{Table:DelsarteA}
\begin{tabular}{l|c|c|c}
\hline
$f_A$ & $|\det(A)|$ & $e_A$ & $(\star)$ \\
\hline
$x_0^4+x_1^4+x_2^4+x_3^4$ &
$2^8$ &
4 &
$\varnothing$
\\
$x_0^4+x_1^4+x_2^4+x_2x_3^3$ &
$2^6 \cdot 3$ &
12 &
$\varnothing$
\\
$x_0^4+x_1^4+x_2^3x_3+x_3^3x_2$ &
$2^5 \cdot 3$ &
12 &
$\varnothing$
\\
$x_0^4+x_1^4+x_1x_2^3+x_2x_3^3$ &
$2^4 \cdot 3^2$ &
36 &
$\varnothing$
\\
$x_0^4+x_0x_1^3+x_2^4+x_2x_3^3$ &
$2^4 \cdot 3^2$ &
6 &
$\varnothing$
\\
$x_0^4+x_1^3x_2+x_2^3x_3+x_3^3x_1$ &
$2^4 \cdot 7$ &
4 &
3
\\
$x_0^4+x_0x_1^3+x_2^3x_3+x_3^3x_2$ &
$2^5 \cdot 3$ &
12 &
$\varnothing$
\\
$x_0^3x_1+x_1^3x_0+x_2^3x_3+x_3^3x_2$ &
$2^6$ &
4 &
3
\\
$x_0^4+x_0x_1^3+x_1x_2^3+x_2x_3^3$ &
$2^2 \cdot 3^3$ &
27 &
2
\\
$x_0^3x_1+x_1^3x_2+x_2^3x_3+x_3^3x_0$ &
$2^4 \cdot 5$ &
4 &
3,5
\\
\hline
\end{tabular}
\end{table}

\begin{table}[htbp]
\centering
\caption{Smooth Delsarte K3 surfaces for $(q_0,\ldots,q_3)=(1,1,1,3)$.}
\label{Table:DelsarteB}
\begin{tabular}{l|c|c|c}
\hline
$f_A$ & $|\det(A)|$ & $e_A$ & $(\star)$ \\
\hline
$x_0^6+x_1^6+x_2^6+x_3^2$ &
$2^4 \cdot 3^3$ &
6 &
$\varnothing$
\\
$x_0^6+x_1^6+x_2^5x_1+x_3^2$ &
$2^3 \cdot 3^2 \cdot 5$ &
30 &
$\varnothing$
\\
$x_0^6+x_1^5x_2+x_2^5x_1+x_3^2$ &
$2^5 \cdot 3^2$ &
6 &
5
\\
$x_0^6+x_1^6+x_2^3x_3+x_3^2$ &
$2^3 \cdot 3^3$ &
6 &
$\varnothing$
\\
$x_0^6+x_1^5x_0+x_2^5x_1+x_3^2$ &
$2^2 \cdot 3 \cdot 5^2$ &
50 &
3
\\
$x_0^6+x_1^5x_0+x_2^3x_3+x_3^2$ &
$2^2 \cdot 3^2 \cdot 5$ &
15 &
$\varnothing$
\\
$x_0^6+x_1^5x_2+x_2^3x_3+x_3^2$ &
$2^2 \cdot 3^2 \cdot 5$ &
30 &
$\varnothing$
\\
$x_0^5x_1+x_1^5x_2+x_2^5x_0+x_3^2$ &
$2^2 \cdot 3^2 \cdot 7$ &
6 &
5
\\
$x_0^5x_1+x_1^5x_0+x_2^3x_3+x_3^2$ &
$2^4 \cdot 3^2$ &
6 &
5
\\
$x_0^5x_1+x_1^5x_2+x_2^3x_3+x_3^2$ &
$2 \cdot 3 \cdot 5^2$ &
25 &
2,3
\\
\hline
\end{tabular}
\end{table}

\begin{proposition}\label{prop:quartic-classificationA}
Let $k$ be an algebraically closed field of characteristic $0$.
In the setting of \cref{defn:Delsarte}, we consider the case of quartic surfaces, i.e.\ the case where $q_i=1$ for any $i$.
We assume that $X_{A,k}$ is a smooth Delsarte surface.
Then, up to permutation of the coordinates,
$f_A$ is given by one of Table~\ref{Table:DelsarteA}.
\end{proposition}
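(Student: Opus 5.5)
We classify matrices $A=(a_{ij})$ with nonnegative integer entries and row sums $4$ such that $f_A=\sum_i x^{a_i}$ defines a smooth quartic in characteristic $0$. The first step is a combinatorial reduction coming from smoothness at the coordinate points. At $e_j=(0:\dots:1:\dots:0)$, smoothness of $X_{A,k}$ (or the condition $e_j\notin X_{A,k}$) forces some monomial of $f_A$ to be of the form $x_j^4$ or $x_j^3x_l$ with $l\neq j$. Otherwise every monomial has $x_j$-degree at most $2$, so $f_A$ and all of its partials vanish at $e_j$.

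Since there are exactly four monomials and four coordinate points, we get a map $\pi\colon\{0,1,2,3\}\to\{0,1,2,3\}$ assigning to each $j$ a row $i$ with $a_{ij}\ge3$. A single monomial of degree $4$ cannot have exponent $\ge3$ in two different variables, so $\pi$ is injective and hence bijective. After permuting rows, row $j$ is either $x_j^4$ or $x_j^3x_{\sigma(j)}$ for some $\sigma(j)\neq j$. Thus $f_A$ is encoded by a partial self-map $\sigma$ of the index set without fixed points, defined on the set of $j$ whose monomial is of type $x_j^3x_{\sigma(j)}$. Since we also need $\det A\neq0$ for $X_A$ to be Delsarte in the intended sense, we get a functional digraph on four vertices.

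Next we enumerate these digraphs up to relabelling. Each component is either
\begin{itemize}
\item a chain ending at a vertex $j$ with $x_j^4$, or
\item a cycle of length $\ge2$ with trees attached.
\end{itemize}
For each shape we test smoothness using a second smoothness constraint. If two arrows point into the same vertex, say $\sigma(i)=\sigma(i')=l$, then a direct partial-derivative check along the line $\{x_j=0 \text{ for } j\neq i,i'\}$ produces a singular point. Along that line,
\[
f=x_i^3x_l+x_{i'}^3x_l+(\text{terms vanishing to order} \geq 2),
\]
so one solves $x_l=0$ and $x_i^3+x_{i'}^3=0$. The same check rules out a vertex carrying $x_l^4$ that also receives an arrow when another arrow also lands on it. This leaves the in-degree$\le1$ configurations, namely disjoint unions of chains $x_{j_1}^3x_{j_2},\dots,x_{j_r}^4$ and cycles. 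Listing partitions of $4$ into such pieces gives exactly the ten rows of Table~\ref{Table:DelsarteA}, including the pure Fermat case, cycles of length $2,3,4$, and chains of length $\le 4$.

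Finally, for each of the ten polynomials I would verify smoothness in characteristic $0$. For these ``loop/chain'' sums the Jacobian system splits monomially, and one sees that the only common zero is the origin. This is the classical Kreuzer--Skarke/Arnold criterion for invertible polynomials, which I would cite or check by hand. The main obstacle is making the exclusion step airtight: I must show every configuration with a vertex of in-degree $\ge2$ is singular, including the cases where the targets carry $x^4$ versus $x^3x$. I would handle this by an explicit singular point as above, treating the few residual shapes case by case.
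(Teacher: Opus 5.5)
Your proposal is correct and follows the paper's proof essentially step by step. You use the coordinate points to force a monomial $x_j^4$ or $x_j^3x_l$, encode $f_A$ as a functional digraph, rule out two non-loop arrows into one vertex via the singular point with $x_i^3+x_{i'}^3=0$ on the line spanned by the two sources (the paper's $(0:1:-1:0)$), and enumerate chains and cycles. Your injectivity argument for $\pi$ justifies more cleanly a step the paper passes over quickly; the final smoothness check of the ten polynomials (and the remark on $\det A\neq 0$) is not needed for the one-directional statement.
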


\begin{proof}


First, for each \(0\le j\le 3\), there exists a monomial in \(f_A\) in which the exponent of \(x_j\) is at least \(3\).
Indeed, after renumbering variables, it suffices to consider the case \(j=0\).
If \(a_{i0}\le 2\) for every \(i\), then \(X_{A,k}\) is singular at \((1:0:0:0)\) by the Jacobian criterion.

Therefore, for each $j$, at least one monomial $x_j^4$ or $x_j^3 x_i ( i \neq j)$ appears in $f_A$.
Since $f_A$ is a polynomial with 4 terms, after reordering variables, we have the following:
For any $0\leq i\leq 3$, $\prod_{j=0}^3 x_j^{a_{ij}}$ is equal to either $x_i^4$ or $x_i^3x_j$ for some $j\neq i$.

Now we can associate to $f_A$ a directed graph $\Gamma$ on the four vertices $P_0, P_1, P_2, P_3$ as follows:
\begin{itemize}
\item if the monomial $x_j^4$ appears, we draw a loop at $P_j$;
\item if the monomial $x_i x_j^3$ appears with $i\neq j$, we draw an arrow
\[
P_j\rightarrow P_i.
\]
\end{itemize}
We note that, for each $j$, there is exactly one outgoing arrow from $P_j$, where a loop is regarded as an arrow from $P_j$ to itself. Hence, every vertex has out-degree exactly one.

Next, we show that for each vertex of \(\Gamma\), there do not exist two or more non-loop arrows pointing to that vertex.
Suppose, to the contrary, that there exist monomials
\(x_i x_j^3\) and \(x_i x_k^3\) in \(f_A\), where
\(i,j,k \in \{0,1,2,3\}\) are pairwise distinct.
We may assume that $i=0, j=1, k=2$.
In this case, we can show that the point $(0:1:-1:0)$ is a singular point of $X_{A,k}$.
It contradicts the smoothness of $X_{A,k}$.

A connected component consisting of a single vertex with a loop will be called an \emph{isolated loop}.
For \(r\ge 2\), a \emph{cycle} means a connected component of the form
\[
P_{i_1}\to P_{i_2}\to \cdots \to P_{i_r}\to P_{i_1},
\]
and a \emph{chain} means a connected component of the form
\[
\begin{tikzcd}
P_{i_1} \arrow[r] &
P_{i_2} \arrow[r] &
\cdots \arrow[r] &
P_{i_r} \arrow[loop right]
\end{tikzcd}
,
\]
where the last vertex carries a loop.
We define the length of a cycle (resp.\ a chain) by the number $r$ as above.

Then we can easily classify $\Gamma$ (and $f_A$) as follows:

\smallskip

\noindent
\underline{Four connected components.}
$
x_0^4+x_1^4+x_2^4+x_3^4;
$
four isolated loops.

\smallskip

\noindent
\underline{Three connected components.}
\begin{enumerate}
\item
$
x_0^4+x_1^4+x_2^4+x_2x_3^3;
$
one chain of length $2$ and two isolated loops.
\item
$
x_0^4+x_1^4+x_2^3x_3+x_3^3x_2;
$
one cycle of length $2$ and two isolated loops.
\end{enumerate}

\smallskip

\noindent
\underline{Two connected components.}
\begin{enumerate}
\item
$
x_0^4+x_1^4+x_1x_2^3+x_2x_3^3;
$
one chain of length $3$ and one isolated loop.
\item
$
x_0^4+x_0x_1^3+x_2^4+x_2x_3^3;
$
two chains of length $2$.
\item
$
x_0^4+x_1^3x_2+x_2^3x_3+x_3^3x_1;
$
one cycle of length $3$ and one isolated loop.
\item
$
x_0^4+x_0x_1^3+x_2^3x_3+x_3^3x_2;
$
one cycle of length $2$ and one chain of length $2$.
\item
$
x_0^3x_1+x_1^3x_0+x_2^3x_3+x_3^3x_2;
$
two cycles of length $2$.
\end{enumerate}

\smallskip

\noindent
\underline{One connected component.}
\begin{enumerate}
\item
$
x_0^4+x_0x_1^3+x_1x_2^3+x_2x_3^3;
$
one chain of length $4$.
\item
$
x_0^3x_1+x_1^3x_2+x_2^3x_3+x_3^3x_0;
$
one cycle of length $4$.
\end{enumerate}
\smallskip
This completes the proof. \qedhere

\end{proof}

\begin{proposition}\label{prop:sextic-weighted-classificationB}
Let $k$ be an algebraically closed field of characteristic $0$.
In the setting of \cref{defn:Delsarte}, we consider the case of weighted sextic surfaces, i.e.\ the case where $(q_0, q_1, q_2, q_3)=(1,1,1,3)$.
We assume that $X_{A,k}$ is a smooth Delsarte surface.
Then, up to permutation of the coordinates,
$f_A$ is given by one of Table~\ref{Table:DelsarteB}.
\end{proposition}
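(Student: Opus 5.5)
Following the proof of \cref{prop:quartic-classificationA}, I would first use the Jacobian criterion at the coordinate points to pin down the four monomials, and then classify the resulting combinatorial pattern. Write $f_A=\sum_{i=0}^3 x_0^{a_{i0}}x_1^{a_{i1}}x_2^{a_{i2}}x_3^{a_{i3}}$, where each monomial has weighted degree $6$ and $\deg x_3=3$. The monomials of degree $6$ are $x_3^2$, $x_3\cdot m_3(x_0,x_1,x_2)$, and $m_6(x_0,x_1,x_2)$.

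The point $(0:0:0:1)$ lies on $X_{A,k}$ unless $x_3^2$ appears. Moreover, $X_{A,k}$ is not smooth there if $x_3^2$ is absent: the point is then the singular point of $\P(1,1,1,3)$, and smoothness of the fiber in the sense used above forces the member to avoid it. So $x_3^2$ is one of the four monomials. The remaining three monomials are each either $x_3 m_3$ or $m_6$, where $m_3$ and $m_6$ are monomials in $x_0,x_1,x_2$.

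Next I would apply the Jacobian criterion at $(1:0:0:0)$, $(0:1:0:0)$ and $(0:0:1:0)$, which lie in the smooth locus. At $(1:0:0:0)$, for example, some partial derivative must be nonzero. This forces one of the monomials $x_0^6$, $x_0^5x_j$ with $j\in\{1,2\}$, or $x_0^3x_3$ to occur. Since only three monomials remain and each of them can serve at most one $j$ (each such monomial has a unique variable with exponent $\geq 3$, or equals $x_j^3x_3$), every remaining monomial is assigned bijectively to a vertex $P_j$ with $j\in\{0,1,2\}$. Concretely, it is $x_j^6$ (a loop at $P_j$), $x_j^5x_i$ (an arrow $P_j\to P_i$), or $x_j^3x_3$ (an arrow $P_j\to P_3$, where $P_3$ carries the loop coming from $x_3^2$).

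I would then rule out two non-loop arrows into the same vertex, as in the quartic case. Two monomials $x_j^5x_i$ and $x_k^5x_i$, or $x_j^3x_3$ and $x_k^3x_3$, produce a singular point: for the first, take $x_i=x_3=0$ and choose $x_j,x_k$ with $x_j^5+x_k^5=0$ together with the right vanishing; for the second, solve $x_3=-(x_j^3+x_k^3)/2$ suitably. The case where the two arrows do not meet needs a short explicit check.

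Finally, I would enumerate the functional graphs on $P_0,P_1,P_2$, with $P_3$ available as a sink, whose in-degrees are at most one. These are the following: all loops; a $2$-chain plus a loop; a $2$-cycle plus a loop; a $3$-chain; a $3$-cycle; and each variant in which the chain terminates at $P_3$. Up to permutation these recover the ten rows of Table~\ref{Table:DelsarteB}. The main obstacle is making the no-double-incoming singular-point argument uniform across the mixed cases involving $x_3$, since $x_3$ has different weight there. I would handle these by writing down an explicit singular point for each configuration, and I would also check that the cycle $x_0^5x_1+x_1^5x_2+x_2^5x_0$ and the chains genuinely give smooth members in characteristic $0$.
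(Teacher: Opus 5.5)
Your proposal is correct and follows essentially the same route as the paper. You force $x_3^2$ and, for each $j\le 2$, one of $x_j^6$, $x_j^5x_i$, $x_j^3x_3$; you encode these as a directed graph; you exclude two incoming non-loop arrows via the singular points $(0:1:-1:0)$ and $(1:-1:0:0)$ (up to permutation); and you enumerate the resulting chains, cycles and loops.

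The \emph{mixed cases} you worry about never arise. An arrow into $P_i$ with $i\le 2$ always comes from a monomial $x_j^5x_i$, and an arrow into $P_3$ always from a monomial $x_j^3x_3$, so your two checks are already exhaustive. Checking that the table entries are smooth is also unnecessary for this direction of the statement.
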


\begin{proof}
First, the monomial \(x_3^2\) appears in \(f_A\).
Indeed, otherwise \(X_{A,k}\) is singular at \((0:0:0:1)\) by the Jacobian criterion.
Next, for each \(0\le i\le 2\), at least one of the monomials
\[
x_i^6,\qquad x_i^5x_j\ (j\neq i),\qquad x_i^3x_3
\]
appears in \(f_A\).
Indeed, after renumbering variables, it suffices to consider the case \(i=0\).
If none of these monomials appears, then \(X_{A,k}\) is singular at \((1:0:0:0)\) by the Jacobian criterion.

Therefore, we may write
\[
f_A=x_3^2+M_0+M_1+M_2,
\]
where, for each \(j=0,1,2\), the monomial \(M_j\) is one of
\[
x_j^6,\qquad x_ix_j^5\ (j\neq i),\qquad x_j^3x_3.
\]

We now associate to \(f_A\) a directed graph \(\Gamma\) with vertices \(P_0,P_1,P_2,P_3\) as follows.
For each \(j\in\{0,1,2\}\),
\begin{itemize}
\item if \(M_j=x_j^6\), we draw a loop at \(P_j\);
\item if \(M_j=x_ix_j^5\) with \(i\in\{0,1,2\}\) and \(j\neq i\), we draw a directed edge \(P_j\to P_i\);
\item if \(M_j=x_j^3x_3\), we draw a directed edge \(P_j\to P_3\).
\end{itemize}
The monomial \(x_3^2\) gives a loop at \(P_3\).
Thus each of the vertices \(P_0,P_1,P_2\) has exactly one outgoing edge, and \(P_3\) carries a loop.

Next, we show that no vertex of \(\Gamma\) receives two distinct non-loop incoming edges.

First, let \(i\in\{0,1,2\}\), and suppose that there exist monomials
\[
x_i x_j^5 \quad\text{and}\quad x_i x_k^5
\]
in \(f_A\), where \(i,j,k\in\{0,1,2\}\) are pairwise distinct.
After renumbering variables, we may assume that \(i=0\), \(j=1\), and \(k=2\).
Then one checks that \((0:1:-1:0)\) is a singular point of \(X_{A,k}\), a contradiction.

Next, suppose that there exist monomials
\[
x_3x_i^3 \quad\text{and}\quad x_3x_j^3
\]
in \(f_A\) with \(i,j\in\{0,1,2\}\) and \(i\neq j\).
After renumbering variables, we may assume that \(i=0\) and \(j=1\).
Then \((1:-1:0:0)\) is a singular point of \(X_{A,k}\), again a contradiction.

We define isolated loops, cycles, and chains, as well as the lengths of cycles and chains, as in the proof of \Cref{prop:quartic-classificationA}.
Then each connected component of \(\Gamma\) is one of the following:
\begin{itemize}
\item an isolated loop;
\item a cycle contained in \(\{P_0,P_1,P_2\}\);
\item a chain contained in \(\{P_0,P_1,P_2\}\) whose terminal vertex carries a loop;
\item a chain ending at \(P_3\).
\end{itemize}

Then we can easily classify \(\Gamma\) (and \(f_A\)) as follows.

\smallskip

\noindent
\underline{Four connected components.}
$
x_0^6+x_1^6+x_2^6+x_3^2;
$
four isolated loops.

\smallskip

\noindent
\underline{Three connected components.}
\begin{enumerate}
\item
$
x_0^6+x_1^6+x_2^5x_1+x_3^2;
$
one chain of length $2$ among $P_0,P_1,P_2$, together with one isolated loop among $P_0,P_1,P_2$ and the isolated loop $P_3$.
\item
$
x_0^6+x_1^5x_2+x_2^5x_1+x_3^2;
$
one cycle of length $2$ among $P_0,P_1,P_2$, together with one isolated loop among $P_0,P_1,P_2$ and the isolated loop $P_3$.
\item
$
x_0^6+x_1^6+x_2^3x_3+x_3^2;
$
one chain of length $2$ ending at $P_3$, together with two isolated loops among $P_0,P_1,P_2$.
\end{enumerate}

\smallskip

\noindent
\underline{Two connected components.}
\begin{enumerate}
\item
$
x_0^6+x_1^5x_0+x_2^5x_1+x_3^2;
$
one chain of length $3$ among $P_0,P_1,P_2$, together with the isolated loop $P_3$.
\item
$
x_0^6+x_1^5x_0+x_2^3x_3+x_3^2;
$
one chain of length $2$ among $P_0,P_1,P_2$ and one chain of length $2$ ending at $P_3$.
\item
$
x_0^6+x_1^5x_2+x_2^3x_3+x_3^2;
$
one isolated loop among $P_0,P_1,P_2$ and one chain of length $3$ ending at $P_3$.
\item
$
x_0^5x_1+x_1^5x_2+x_2^5x_0+x_3^2;
$
one cycle of length $3$ among $P_0,P_1,P_2$, together with the isolated loop $P_3$.
\item
$
x_0^5x_1+x_1^5x_0+x_2^3x_3+x_3^2;
$
one cycle of length $2$ among $P_0,P_1,P_2$ and one chain of length $2$ ending at $P_3$.
\end{enumerate}

\smallskip

\noindent
\underline{One connected component.}
$
x_0^5x_1+x_1^5x_2+x_2^3x_3+x_3^2:
$
one chain of length $3$ ending at $P_3$.

\smallskip
This completes the proof. \qedhere
\end{proof}

\bibliographystyle{skalpha}
\bibliography{bibliography.bib}

@article{Goto96,
  author    = {Yasuhiro Goto},
  title     = {The {A}rtin invariant of supersingular weighted {D}elsarte {K}3 surfaces},
  journal   = {J. Math. Kyoto Univ.},
  volume    = {36},
  number    = {2},
  pages     = {359--363},
  year      = {1996},
  doi       = {10.1215/kjm/1250518553},
}

@article{TY26,
  author  = {Teppei Takamatsu and Shou Yoshikawa},
  title   = {Non-quasi-{$F$}-split canonical affine fourfolds in any characteristic},
  journal = {arXiv preprint arXiv:2602.14792},
  year    = {2026}
}

@article{OTY25,
  author  = {Hirotaka Onuki and Teppei Takamatsu and Shou Yoshikawa},
  title   = {Quasi-{$F$}-splitting and smooth weak del Pezzo surfaces in mixed characteristic},
  journal = {arXiv preprint arXiv:2510.19308},
  year    = {2025}
}

@article{kty,
  title={Fedder type criteria for quasi-${F}$-splitting {I}},
  author={Kawakami, Tatsuro and Takamatsu, Teppei and Yoshikawa, Shou},
  journal={arXiv preprint arXiv:2204.10076},
  year={2022, to appear in Amer. J. Math}
}

@article{KTY2,
  title={Fedder type criteria for quasi-${F}$-splitting {II}},
  author={Kawakami, Tatsuro and Takamatsu, Teppei and Yoshikawa, Shou},
  journal={arXiv preprint arXiv:2511.17270},
  year={2025}
}

@article {KTTWYY1,
    AUTHOR = {Kawakami, Tatsuro and Takamatsu, Teppei and Tanaka, Hiromu and
              Witaszek, Jakub and Yobuko, Fuetaro and Yoshikawa, Shou},
     TITLE = {Quasi-{$F$}-splittings in birational geometry},
   JOURNAL = {Ann. Sci. \'Ec. Norm. Sup\'er. (4)},
  FJOURNAL = {Annales Scientifiques de l'\'Ecole Normale Sup\'erieure.
              Quatri\`eme S\'erie},
    VOLUME = {58},
      YEAR = {2025},
    NUMBER = {3},
     PAGES = {665--748},
      ISSN = {0012-9593,1873-2151},
   MRCLASS = {14E05 (13A35 14G17)},
  MRNUMBER = {4962159},
}

@article{DK03,
  author    = {Igor Dolgachev and Shigeyuki Kondō},
  title     = {A Supersingular {K3} Surface in Characteristic 2 and the Leech Lattice},
  journal   = {International Mathematics Research Notices},
  year      = {2003},
  number    = {1},
  pages     = {1--23},
  doi       = {10.1155/S1073792803000018},
  url       = {https://doi.org/10.1155/S1073792803000018}
}

@article{Goto04,
  author  = {Goto, Yasuhiro},
  title   = {A Note on the Height of the Formal {B}rauer Group of a {$K3$} Surface},
  journal = {Canadian Mathematical Bulletin},
  volume  = {47},
  number  = {1},
  year    = {2004},
  pages   = {22--29},
  doi     = {10.4153/CMB-2004-004-7}
}

@article{Yoshikawa25,
  title        = {Computation method for perfectoid purity and perfectoid {BCM\textendash{}regularity}},
  author       = {Shou Yoshikawa},
  journal      = {arXiv preprint arXiv:2502.06108},
  doi          = {10.48550/arXiv.2502.06108},
  year         = {2025},
}

@article{Yoshikawa25-cri,
  title        = {A Criterion for Perfectoid Purity and the Rationality of Thresholds},
  author       = {Shou Yoshikawa},
  journal      = {arXiv preprint arXiv:2510.19319},
  doi          = {10.48550/arXiv.2510.19319},
  year         = {2025},
}

@article {Fed,
    AUTHOR = {Fedder, Richard},
     TITLE = {{$F$}-purity and rational singularity},
   JOURNAL = {Trans. Amer. Math. Soc.},
  FJOURNAL = {Transactions of the American Mathematical Society},
    VOLUME = {278},
      YEAR = {1983},
    NUMBER = {2},
     PAGES = {461--480},
      ISSN = {0002-9947},
   MRCLASS = {13H10 (13D03 14B05)},
  MRNUMBER = {701505},
MRREVIEWER = {D. Kirby},
       DOI = {10.2307/1999165},
       URL = {https://doi.org/10.2307/1999165},
}

@article{Bhatt-Singh,
  title={The {F}-pure threshold of a {C}alabi--{Y}au hypersurface},
  author={Bhatt, Bhargav and Singh, Anurag K},
  journal={Mathematische Annalen},
  volume={362},
  number={1},
  pages={551--567},
  year={2015},
  publisher={Springer}
}

@article {Rizov,
    AUTHOR = {Rizov, Jordan},
     TITLE = {Moduli stacks of polarized {$K3$} surfaces in mixed
              characteristic},
   JOURNAL = {Serdica Math. J.},
  FJOURNAL = {Serdica. Mathematical Journal. Serdika. Matematichesko
              Spisanie},
    VOLUME = {32},
      YEAR = {2006},
    NUMBER = {2-3},
     PAGES = {131--178},
      ISSN = {1310-6600,2815-5297},
   MRCLASS = {14J28 (14D22 14J10)},
  MRNUMBER = {2263236},
MRREVIEWER = {Martin\ C.\ Olsson},
}

@article {Maulik,
    AUTHOR = {Maulik, Davesh},
     TITLE = {Supersingular {K}3 surfaces for large primes},
      NOTE = {With an appendix by Andrew Snowden},
   JOURNAL = {Duke Math. J.},
  FJOURNAL = {Duke Mathematical Journal},
    VOLUME = {163},
      YEAR = {2014},
    NUMBER = {13},
     PAGES = {2357--2425},
      ISSN = {0012-7094,1547-7398},
   MRCLASS = {14J28 (14G17)},
  MRNUMBER = {3265555},
MRREVIEWER = {Stefan\ Schr\"oer},
       DOI = {10.1215/00127094-2804783},
       URL = {https://doi.org/10.1215/00127094-2804783},
}

@article {Yobuko,
    AUTHOR = {Yobuko, Fuetaro},
     TITLE = {Quasi-{F}robenius splitting and lifting of {C}alabi-{Y}au
              varieties in characteristic {$p$}},
   JOURNAL = {Math. Z.},
  FJOURNAL = {Mathematische Zeitschrift},
    VOLUME = {292},
      YEAR = {2019},
    NUMBER = {1-2},
     PAGES = {307--316},
      ISSN = {0025-5874,1432-1823},
   MRCLASS = {14J32 (13F35 14G17)},
  MRNUMBER = {3968903},
MRREVIEWER = {Tyler\ L.\ Kelly},
       DOI = {10.1007/s00209-018-2198-7},
       URL = {https://doi.org/10.1007/s00209-018-2198-7},
}

@incollection {OgusK3,
    AUTHOR = {Ogus, Arthur},
     TITLE = {Singularities of the height strata in the moduli of {$K3$}
              surfaces},
 BOOKTITLE = {Moduli of abelian varieties ({T}exel {I}sland, 1999)},
    SERIES = {Progr. Math.},
    VOLUME = {195},
     PAGES = {325--343},
 PUBLISHER = {Birkh\"auser, Basel},
      YEAR = {2001},
      ISBN = {3-7643-6517-X},
   MRCLASS = {14J28 (14F30)},
  MRNUMBER = {1827026},
MRREVIEWER = {Vasile\ Br\^inz\u anescu},
}

@book{Huybrechts,
  title={Lectures on K3 surfaces},
  author={Huybrechts, Daniel},
  volume={158},
  year={2016},
  publisher={Cambridge University Press}
}

@article {vdGeer-Katsura,
    AUTHOR = {van der Geer, G. and Katsura, T.},
     TITLE = {On a stratification of the moduli of {$K3$} surfaces},
   JOURNAL = {J. Eur. Math. Soc. (JEMS)},
  FJOURNAL = {Journal of the European Mathematical Society (JEMS)},
    VOLUME = {2},
      YEAR = {2000},
    NUMBER = {3},
     PAGES = {259--290},
      ISSN = {1435-9855,1435-9863},
   MRCLASS = {14J28 (11G50)},
  MRNUMBER = {1776939},
MRREVIEWER = {Atanas\ Iliev},
       DOI = {10.1007/s100970000021},
       URL = {https://doi.org/10.1007/s100970000021},
}

@article{Fletcher,
  title={Working with weighted complete intersections},
  author={Iano-Fletcher, Anthony R},
  journal={Explicit birational geometry of},
  volume={3},
  pages={101--173},
  year={2000}
}

@article {LiendoLucchiniArteche,
    AUTHOR = {Liendo, Alvaro and Lucchini Arteche, Giancarlo},
     TITLE = {Automorphisms of products of toric varieties},
   JOURNAL = {Math. Res. Lett.},
  FJOURNAL = {Mathematical Research Letters},
    VOLUME = {29},
      YEAR = {2022},
    NUMBER = {2},
     PAGES = {529--540},
      ISSN = {1073-2780,1945-001X},
   MRCLASS = {14M25 (14J50)},
  MRNUMBER = {4492227},
MRREVIEWER = {Eunjeong\ Lee},
       DOI = {10.4310/mrl.2022.v29.n2.a9},
       URL = {https://doi.org/10.4310/mrl.2022.v29.n2.a9},
}

@incollection {Ogus-supersingularK3crystal,
    AUTHOR = {Ogus, Arthur},
     TITLE = {Supersingular {$K3$}\ crystals},
 BOOKTITLE = {Journ\'ees de {G}\'eom\'etrie {A}lg\'ebrique de {R}ennes
              ({R}ennes, 1978), {V}ol. {II}},
    SERIES = {Ast\'erisque},
    VOLUME = {64},
     PAGES = {3--86},
 PUBLISHER = {Soc. Math. France, Paris},
      YEAR = {1979},
   MRCLASS = {14J25 (14B10 14F30)},
  MRNUMBER = {563467},
MRREVIEWER = {G.\ Horrocks},
}

@incollection {Ogus-Crystalline-Torelli,
    AUTHOR = {Ogus, Arthur},
     TITLE = {A crystalline {T}orelli theorem for supersingular {$K3$}\
              surfaces},
 BOOKTITLE = {Arithmetic and geometry, {V}ol. {II}},
    SERIES = {Progr. Math.},
    VOLUME = {36},
     PAGES = {361--394},
 PUBLISHER = {Birkh\"auser Boston, Boston, MA},
      YEAR = {1983},
      ISBN = {3-7643-3133-X},
   MRCLASS = {14J28 (14F30 14J15)},
  MRNUMBER = {717616},
MRREVIEWER = {G.\ Horrocks},
}

@article {ItoBrauer,
    AUTHOR = {Ito, Kazuhiro},
     TITLE = {Finiteness of {B}rauer groups of {$K3$} surfaces in
              characteristic 2},
   JOURNAL = {Int. J. Number Theory},
  FJOURNAL = {International Journal of Number Theory},
    VOLUME = {14},
      YEAR = {2018},
    NUMBER = {6},
     PAGES = {1813--1825},
      ISSN = {1793-0421,1793-7310},
   MRCLASS = {14J28 (14F22)},
  MRNUMBER = {3827960},
MRREVIEWER = {Giancarlo\ Lucchini Arteche},
       DOI = {10.1142/S1793042118501087},
       URL = {https://doi-org.kyoto-u.idm.oclc.org/10.1142/S1793042118501087},
}

@article {OgusHasse,
    AUTHOR = {Ogus, Arthur},
     TITLE = {On the {H}asse locus of a {C}alabi-{Y}au family},
   JOURNAL = {Math. Res. Lett.},
  FJOURNAL = {Mathematical Research Letters},
    VOLUME = {8},
      YEAR = {2001},
    NUMBER = {1-2},
     PAGES = {35--41},
      ISSN = {1073-2780},
   MRCLASS = {14F40 (14J10 14J32)},
  MRNUMBER = {1825258},
MRREVIEWER = {Elmar\ Grosse-Kl\"onne},
       DOI = {10.4310/MRL.2001.v8.n1.a5},
       URL = {https://doi-org.kyoto-u.idm.oclc.org/10.4310/MRL.2001.v8.n1.a5},
}

@article {Madapusi,
    AUTHOR = {Madapusi Pera, Keerthi},
     TITLE = {The {T}ate conjecture for {K}3 surfaces in odd characteristic},
   JOURNAL = {Invent. Math.},
  FJOURNAL = {Inventiones Mathematicae},
    VOLUME = {201},
      YEAR = {2015},
    NUMBER = {2},
     PAGES = {625--668},
      ISSN = {0020-9910,1432-1297},
   MRCLASS = {14J28 (11G10 14G17 14G35 14K15)},
  MRNUMBER = {3370622},
MRREVIEWER = {G.\ K.\ Sankaran},
       DOI = {10.1007/s00222-014-0557-5},
       URL = {https://doi.org/10.1007/s00222-014-0557-5},
}

@article {Kim-Madapusi,
    AUTHOR = {Kim, Wansu and Madapusi Pera, Keerthi},
     TITLE = {2-adic integral canonical models},
   JOURNAL = {Forum Math. Sigma},
  FJOURNAL = {Forum of Mathematics. Sigma},
    VOLUME = {4},
      YEAR = {2016},
     PAGES = {Paper No. e28, 34},
      ISSN = {2050-5094},
   MRCLASS = {14G35 (11G18)},
  MRNUMBER = {3569319},
MRREVIEWER = {Su-ion\ Ih},
       DOI = {10.1017/fms.2016.23},
       URL = {https://doi.org/10.1017/fms.2016.23},
}

@article {Ito-Ito-Koshikawa,
    AUTHOR = {Ito, Kazuhiro and Ito, Tetsushi and Koshikawa, Teruhisa},
     TITLE = {C{M} liftings of {$K3$} surfaces over finite fields and their
              applications to the {T}ate conjecture},
   JOURNAL = {Forum Math. Sigma},
  FJOURNAL = {Forum of Mathematics. Sigma},
    VOLUME = {9},
      YEAR = {2021},
     PAGES = {Paper No. e29, 70},
      ISSN = {2050-5094},
   MRCLASS = {11G18 (11G15 14G35 14J28)},
  MRNUMBER = {4241794},
MRREVIEWER = {Salim\ Tayou},
       DOI = {10.1017/fms.2021.24},
       URL = {https://doi.org/10.1017/fms.2021.24},
}

@inproceedings{Madapusierratum,
  title={Erratum to appendix to ‘2-adic integral canonical models’},
  author={Pera, Keerthi Madapusi},
  booktitle={Forum of Mathematics, Sigma},
  volume={8},
  pages={e14},
  year={2020},
  organization={Cambridge University Press}
}

@article {Degtyarev,
    AUTHOR = {Degtyarev, Alex},
     TITLE = {Lines in supersingular quartics},
   JOURNAL = {J. Math. Soc. Japan},
  FJOURNAL = {Journal of the Mathematical Society of Japan},
    VOLUME = {74},
      YEAR = {2022},
    NUMBER = {3},
     PAGES = {973--1019},
      ISSN = {0025-5645,1881-1167},
   MRCLASS = {14J28 (14G17 14J27 14N25)},
  MRNUMBER = {4484237},
       DOI = {10.2969/jmsj/81998199},
       URL = {https://doi.org/10.2969/jmsj/81998199},
}

@incollection {Shimada-Zhang,
    AUTHOR = {Shimada, Ichiro and Zhang, De-Qi},
     TITLE = {{$K3$} surfaces with ten cusps},
 BOOKTITLE = {Algebraic geometry},
    SERIES = {Contemp. Math.},
    VOLUME = {422},
     PAGES = {187--211},
 PUBLISHER = {Amer. Math. Soc., Providence, RI},
      YEAR = {2007},
      ISBN = {978-0-8218-4201-0; 0-8218-4201-3},
   MRCLASS = {14J28 (14J17)},
  MRNUMBER = {2296438},
MRREVIEWER = {I.\ Dolgachev},
       DOI = {10.1090/conm/422/08061},
       URL = {https://doi.org/10.1090/conm/422/08061},
}

@article {Rudakov-Shafarevich,
    AUTHOR = {Rudakov, A. N. and {\v{S}}afarevi{\v{c}}, I. R.},
     TITLE = {Supersingular {$K3$}\ surfaces over fields of characteristic
              {$2$}},
   JOURNAL = {Izv. Akad. Nauk SSSR Ser. Mat.},
  FJOURNAL = {Izvestiya Akademii Nauk SSSR. Seriya Matematicheskaya},
    VOLUME = {42},
      YEAR = {1978},
    NUMBER = {4},
     PAGES = {848--869},
      ISSN = {0373-2436},
   MRCLASS = {14J25 (14J10)},
  MRNUMBER = {508830},
MRREVIEWER = {Miles\ Reid},
}

@article {Shimadachar2,
    AUTHOR = {Shimada, Ichiro},
     TITLE = {Rational double points on supersingular {$K3$} surfaces},
   JOURNAL = {Math. Comp.},
  FJOURNAL = {Mathematics of Computation},
    VOLUME = {73},
      YEAR = {2004},
    NUMBER = {248},
     PAGES = {1989--2017},
      ISSN = {0025-5718,1088-6842},
   MRCLASS = {14J28 (14J17 14J27 14Q10)},
  MRNUMBER = {2059747},
MRREVIEWER = {Andreas\ Leopold\ Knutsen},
       DOI = {10.1090/S0025-5718-04-01641-2},
       URL = {https://doi.org/10.1090/S0025-5718-04-01641-2},
}

@article {ShimadaSupersingularodd,
    AUTHOR = {Shimada, Ichiro},
     TITLE = {Supersingular {$K3$} surfaces in odd characteristic and sextic
              double planes},
   JOURNAL = {Math. Ann.},
  FJOURNAL = {Mathematische Annalen},
    VOLUME = {328},
      YEAR = {2004},
    NUMBER = {3},
     PAGES = {451--468},
      ISSN = {0025-5831,1432-1807},
   MRCLASS = {14J28 (14J20)},
  MRNUMBER = {2036331},
MRREVIEWER = {Shigeyuki\ Kondo},
       DOI = {10.1007/s00208-003-0494-x},
       URL = {https://doi.org/10.1007/s00208-003-0494-x},
}

@article {Yui,
    AUTHOR = {Yui, Noriko},
     TITLE = {Formal {B}rauer groups arising from certain weighted {$K3$}
              surfaces},
   JOURNAL = {J. Pure Appl. Algebra},
  FJOURNAL = {Journal of Pure and Applied Algebra},
    VOLUME = {142},
      YEAR = {1999},
    NUMBER = {3},
     PAGES = {271--296},
      ISSN = {0022-4049,1873-1376},
   MRCLASS = {14F22 (14J28)},
  MRNUMBER = {1721096},
MRREVIEWER = {I.\ Dolgachev},
       DOI = {10.1016/S0022-4049(98)00150-9},
       URL = {https://doi.org/10.1016/S0022-4049(98)00150-9},
}

@article {Stienstra,
    AUTHOR = {Stienstra, Jan},
     TITLE = {Formal group laws arising from algebraic varieties},
   JOURNAL = {Amer. J. Math.},
  FJOURNAL = {American Journal of Mathematics},
    VOLUME = {109},
      YEAR = {1987},
    NUMBER = {5},
     PAGES = {907--925},
      ISSN = {0002-9327,1080-6377},
   MRCLASS = {14L05 (11G35 14J28)},
  MRNUMBER = {910357},
MRREVIEWER = {Noriko Yui},
       DOI = {10.2307/2374494},
       URL = {https://doi.org/10.2307/2374494},
}

@article {Shiodaexplicit,
    AUTHOR = {Shioda, Tetsuji},
     TITLE = {An explicit algorithm for computing the {P}icard number of
              certain algebraic surfaces},
   JOURNAL = {Amer. J. Math.},
  FJOURNAL = {American Journal of Mathematics},
    VOLUME = {108},
      YEAR = {1986},
    NUMBER = {2},
     PAGES = {415--432},
      ISSN = {0002-9327,1080-6377},
   MRCLASS = {14J05 (14C22)},
  MRNUMBER = {833362},
MRREVIEWER = {L.\ B\u adescu},
       DOI = {10.2307/2374678},
       URL = {https://doi.org/10.2307/2374678},
}

@article {Shimadachar2algo,
    AUTHOR = {Shimada, Ichiro},
     TITLE = {Supersingular {$K3$} surfaces in characteristic 2 as double
              covers of a projective plane},
   JOURNAL = {Asian J. Math.},
  FJOURNAL = {Asian Journal of Mathematics},
    VOLUME = {8},
      YEAR = {2004},
    NUMBER = {3},
     PAGES = {531--586},
      ISSN = {1093-6106,1945-0036},
   MRCLASS = {14J28},
  MRNUMBER = {2129248},
MRREVIEWER = {M.\ Kh.\ Gizatullin},
       DOI = {10.4310/ajm.2004.v8.n3.a8},
       URL = {https://doi.org/10.4310/ajm.2004.v8.n3.a8},
}

@book {Blass-Lang,
    AUTHOR = {Blass, Piotr and Lang, Jeffrey},
     TITLE = {Zariski surfaces and differential equations in characteristic
              {$p>0$}},
    SERIES = {Monographs and Textbooks in Pure and Applied Mathematics},
    VOLUME = {106},
      NOTE = {With the collaboration of David Joyce, William E. Lang,
              Raymond Hoobler, Joseph Lipman, Marc Levine, Thorston Ekedahl
              and J.~Blass},
 PUBLISHER = {Marcel Dekker, Inc., New York},
      YEAR = {1987},
     PAGES = {viii+441},
      ISBN = {0-8247-7637-2},
   MRCLASS = {14J25 (14J05)},
  MRNUMBER = {879599},
MRREVIEWER = {Toshiyuki\ Katsura},
}

@article {ShimadaModulicurves,
    AUTHOR = {Shimada, Ichiro},
     TITLE = {Moduli curves of supersingular {$K3$} surfaces in
              characteristic 2 with {A}rtin invariant 2},
   JOURNAL = {Proc. Edinb. Math. Soc. (2)},
  FJOURNAL = {Proceedings of the Edinburgh Mathematical Society. Series II},
    VOLUME = {49},
      YEAR = {2006},
    NUMBER = {2},
     PAGES = {435--503},
      ISSN = {0013-0915,1464-3839},
   MRCLASS = {14J28 (14H50 14J10 14Q10 94B27)},
  MRNUMBER = {2243797},
MRREVIEWER = {I.\ Dolgachev},
       DOI = {10.1017/S0013091504000562},
       URL = {https://doi.org/10.1017/S0013091504000562},
}

@article {Pho-Shimada,
    AUTHOR = {Pho, Duc Tai and Shimada, Ichiro},
     TITLE = {Unirationality of certain supersingular {$K3$} surfaces in
              characteristic 5},
   JOURNAL = {Manuscripta Math.},
  FJOURNAL = {Manuscripta Mathematica},
    VOLUME = {121},
      YEAR = {2006},
    NUMBER = {4},
     PAGES = {425--435},
      ISSN = {0025-2611,1432-1785},
   MRCLASS = {14J28 (14M20)},
  MRNUMBER = {2282430},
MRREVIEWER = {Matthias\ Sch\"utt},
       DOI = {10.1007/s00229-006-0045-3},
       URL = {https://doi.org/10.1007/s00229-006-0045-3},
}

@article {Shimadaprojectivemodelchar5,
    AUTHOR = {Shimada, Ichiro},
     TITLE = {Projective models of the supersingular {$K3$} surface with
              {A}rtin invariant 1 in characteristic 5},
   JOURNAL = {J. Algebra},
  FJOURNAL = {Journal of Algebra},
    VOLUME = {403},
      YEAR = {2014},
     PAGES = {273--299},
      ISSN = {0021-8693,1090-266X},
   MRCLASS = {14J28 (14C17)},
  MRNUMBER = {3166075},
MRREVIEWER = {Stefan\ Schr\"oer},
       DOI = {10.1016/j.jalgebra.2013.12.029},
       URL = {https://doi.org/10.1016/j.jalgebra.2013.12.029},
}

@article {Shimada-ZhangKummer,
    AUTHOR = {Shimada, Ichiro and Zhang, De-Qi},
     TITLE = {On {K}ummer type construction of supersingular {$K3$} surfaces
              in characteristic 2},
   JOURNAL = {Pacific J. Math.},
  FJOURNAL = {Pacific Journal of Mathematics},
    VOLUME = {232},
      YEAR = {2007},
    NUMBER = {2},
     PAGES = {379--400},
      ISSN = {0030-8730,1945-5844},
   MRCLASS = {14J28},
  MRNUMBER = {2366360},
MRREVIEWER = {Trygve\ Johnsen},
       DOI = {10.2140/pjm.2007.232.379},
       URL = {https://doi.org/10.2140/pjm.2007.232.379},
}

@article {KatsuraSchutt,
    AUTHOR = {Katsura, Toshiyuki and Sch\"utt, Matthias},
     TITLE = {Zariski {K}3 surfaces},
   JOURNAL = {Rev. Mat. Iberoam.},
  FJOURNAL = {Revista Matem\'atica Iberoamericana},
    VOLUME = {36},
      YEAR = {2020},
    NUMBER = {3},
     PAGES = {869--894},
      ISSN = {0213-2230,2235-0616},
   MRCLASS = {14J28 (14G17 14K02)},
  MRNUMBER = {4109829},
MRREVIEWER = {Yasuhiro\ Goto},
       DOI = {10.4171/rmi/1152},
       URL = {https://doi.org/10.4171/rmi/1152},
}

@misc{Takamatsu_code,
  author       = {Teppei Takamatsu},
  title        = {Macaulay2 scripts for computing ns},
  NOTE = {available at \url{https://sites.google.com/view/teppei-takamatsu/home/scripts}},
}

@article {Ekedahl-vdGeer,
    AUTHOR = {Ekedahl, Torsten and van der Geer, Gerard},
     TITLE = {Cycle classes on the moduli of {K}3 surfaces in positive
              characteristic},
   JOURNAL = {Selecta Math. (N.S.)},
  FJOURNAL = {Selecta Mathematica. New Series},
    VOLUME = {21},
      YEAR = {2015},
    NUMBER = {1},
     PAGES = {245--291},
      ISSN = {1022-1824,1420-9020},
   MRCLASS = {14C17 (14J10 14J28)},
  MRNUMBER = {3300417},
MRREVIEWER = {Christian\ Liedtke},
       DOI = {10.1007/s00029-014-0156-8},
       URL = {https://doi.org/10.1007/s00029-014-0156-8},
}

@article {ShiodaDelsarte,
    AUTHOR = {Shioda, Tetsuji},
     TITLE = {Supersingular {$K3$} surfaces with big {A}rtin invariant},
   JOURNAL = {J. Reine Angew. Math.},
  FJOURNAL = {Journal f\"ur die Reine und Angewandte Mathematik. [Crelle's
              Journal]},
    VOLUME = {381},
      YEAR = {1987},
     PAGES = {205--210},
      ISSN = {0075-4102,1435-5345},
   MRCLASS = {14J28 (14J05)},
  MRNUMBER = {918849},
MRREVIEWER = {Gerd\ Faltings},
       DOI = {10.1515/crll.1987.381.205},
       URL = {https://doi.org/10.1515/crll.1987.381.205},
}

@article {Matsumoto,
    AUTHOR = {Matsumoto, Yuya},
     TITLE = {Inseparable maps on {$W_n$}-valued local cohomology groups of
              nontaut rational double point singularities and the height of
              {K}3 surfaces},
   JOURNAL = {J. Commut. Algebra},
  FJOURNAL = {Journal of Commutative Algebra},
    VOLUME = {15},
      YEAR = {2023},
    NUMBER = {3},
     PAGES = {377--404},
      ISSN = {1939-0807,1939-2346},
   MRCLASS = {14J28 (13A35 14B15 14J17 14L15 14L30)},
  MRNUMBER = {4680627},
MRREVIEWER = {James\ N.\ Brawner},
       DOI = {10.1216/jca.2023.15.377},
       URL = {https://doi.org/10.1216/jca.2023.15.377},
}

@incollection {Shiodalecturenote,
    AUTHOR = {Shioda, Tetsuji},
     TITLE = {Supersingular {$K3$}\ surfaces},
 BOOKTITLE = {Algebraic geometry ({P}roc. {S}ummer {M}eeting, {U}niv.
              {C}openhagen, {C}openhagen, 1978)},
    SERIES = {Lecture Notes in Math.},
    VOLUME = {732},
     PAGES = {564--591},
 PUBLISHER = {Springer, Berlin},
      YEAR = {1979},
      ISBN = {3-540-09527-6},
   MRCLASS = {14J20 (14J25)},
  MRNUMBER = {555718},
MRREVIEWER = {Piotr\ Blass},
}

@article {Ito-supersingularreduction,
    AUTHOR = {Ito, Kazuhiro},
     TITLE = {On the supersingular reduction of {$K3$} surfaces with complex
              multiplication},
   JOURNAL = {Int. Math. Res. Not. IMRN},
  FJOURNAL = {International Mathematics Research Notices. IMRN},
      YEAR = {2020},
    NUMBER = {20},
     PAGES = {7306--7346},
      ISSN = {1073-7928,1687-0247},
   MRCLASS = {14J28 (14F22 14G17)},
  MRNUMBER = {4172684},
MRREVIEWER = {Remke\ Kloosterman},
       DOI = {10.1093/imrn/rny210},
       URL = {https://doi.org/10.1093/imrn/rny210},
}

@article {Yu-Yui,
    AUTHOR = {Yu, Jeng-Daw and Yui, Noriko},
     TITLE = {{$K3$} surfaces of finite height over finite fields},
   JOURNAL = {J. Math. Kyoto Univ.},
  FJOURNAL = {Journal of Mathematics of Kyoto University},
    VOLUME = {48},
      YEAR = {2008},
    NUMBER = {3},
     PAGES = {499--519},
      ISSN = {0023-608X},
   MRCLASS = {11G25 (14G15 14J28)},
  MRNUMBER = {2511048},
MRREVIEWER = {Matthias\ Sch\"utt},
       DOI = {10.1215/kjm/1250271381},
       URL = {https://doi.org/10.1215/kjm/1250271381},
}

@article {Taelman,
    AUTHOR = {Taelman, Lenny},
     TITLE = {K3 surfaces over finite fields with given {$L$}-function},
   JOURNAL = {Algebra Number Theory},
  FJOURNAL = {Algebra \& Number Theory},
    VOLUME = {10},
      YEAR = {2016},
    NUMBER = {5},
     PAGES = {1133--1146},
      ISSN = {1937-0652,1944-7833},
   MRCLASS = {14J28 (11G25 14G15)},
  MRNUMBER = {3531364},
MRREVIEWER = {Sho\ Tanimoto},
       DOI = {10.2140/ant.2016.10.1133},
       URL = {https://doi.org/10.2140/ant.2016.10.1133},
}
\end{document}